\theoremstyle{plain}
\newtheorem{thm}{Theorem}[section]
\newtheorem{cor}[thm]{Corollary}
\newtheorem{lem}[thm]{Lemma}
\newtheorem{conj}[thm]{Conjecture}
\newtheorem{prob}[thm]{Problem}
\def\cal{\mathcal}
\def\bbb{\mathbb}
\def\op{\operatorname}
\renewcommand{\phi}{\varphi}
\newcommand{\R}{\bbb{R}}
\newcommand{\N}{\bbb{N}}
\newcommand{\Z}{\bbb{Z}}
\newcommand{\Q}{\bbb{Q}}
\begin{document}

\title[Stern polynomials]{On certain arithmetic properties of Stern polynomials}
\author{Maciej Ulas and Oliwia Ulas}
\thanks{The first named author is holder of START scholarship funded by the Foundation
for Polish Science (FNP)} \keywords{Stern diatomic sequence, Stern
polynomials} \subjclass[2010]{11B83}

\begin{abstract}
We prove several theorems concerning arithmetic properties of Stern
polynomials defined in the following way: $B_{0}(t)=0, B_{1}(t)=1,
B_{2n}(t)=tB_{n}(t)$, and $B_{2n+1}(t)=B_{n}(t)+B_{n+1}(t)$. We
study also the sequence $e(n)=\op{deg}_{t}B_{n}(t)$ and give various
properties of it.

\end{abstract}

\maketitle

\section{Introduction}\label{sec1}

The {\it Stern sequence} (or {\it Stern's diatomic sequence}) $s(n)$
was introduced in \cite{Ste} and is defined recursively in the
following way
\begin{equation*}
s(0)=0,\quad s(1)=1,\quad s(n)=
\begin{cases}
\begin{array}{lll}
  s(\frac{n}{2})                     & & \mbox{for}\;n\;\mbox{even},  \\
  s(\frac{n-1}{2})+s(\frac{n+1}{2}) & & \mbox{for}\;n\;\mbox{odd}.
\end{array}
\end{cases}
\end{equation*}
This sequence appears in different mathematical contexts. For
example in \cite{Hin} a pure graph theoretical problem is considered
related to the metric properties of the so-called Tower of Hanoi
graph. In the cited paper it is shown that the Stern sequence
appears in the counting function of certain paths in this graph.

In the paper \cite{Rez} $s(n)$ appears as the number of partitions
of a natural number $n-1$ in the form
$n-1=\sum_{i=0}^{\infty}\epsilon_{i}2^{i}$, where
$\epsilon_{i}\in\{0,1,2\}$. These are called hiperbinary
representations. The connections of the Stern sequence with
continued fractions and the Euclidean algorithm are considered in
\cite{Leh} and \cite{Lin}. An interesting application of the Stern
sequence to the problem of construction a bijection between $\N_{+}$
and $\Q_{+}$ is given in \cite{Cal:Wil}. In this paper it is shown
that the sequence $s(n)/s(n+1)$, for $n\geq 1$, encounters every
positive rational number exactly once.

A comprehensive survey of properties of the Stern sequence can be
found in \cite{Urb}. An interesting survey of known results and
applications of the Stern sequence can also be found in \cite{Nor}.

Recently two distinct polynomial analogues of the Stern sequence
appeared. The sequence of polynomials $a(n;x)$ for $n\geq 0$,
defined by $a(0;x)=0$, $a(1;x)=1$, and for $n\geq 2$:
\begin{equation*}
a(2n;x)=a(n;x^2),\quad a(2n+1;x)=xa(n;x^2)+a(n+1;x^2),
\end{equation*}
was considered in \cite{DilSto}. It is easy to see that
$a(n;1)=s(n)$. Remarkably, as was proved in the cited paper,
$xa(2n-1;x)\equiv A_{n+1}(x)\pmod{2}$, where
$A_{n}(x)=\sum_{j=0}^{n}S(n,j)x^{j}$ and $S(n,j)$ are the Stirling
numbers of the second kind. Further properties of this sequence and
its connection with continued fractions can be found in
\cite{DilSto2}.

Let us consider the sequence of Stern polynomials $B_{n}(t)$, $n\geq
0$, defined recursively in the following way:
\begin{equation*}
B_{0}(t)=0,\quad B_{1}(t)=1,\quad B_{n}(t)=
\begin{cases}
\begin{array}{lll}
  tB_{\frac{n}{2}}(t)                     & & \mbox{for}\;n\;\mbox{even},  \\
  B_{\frac{n-1}{2}}(t)+B_{\frac{n+1}{2}}(t) & & \mbox{for}\;n\;\mbox{odd}.
\end{array}
\end{cases}
\end{equation*}
This sequence of polynomials was introduced in \cite{Kla} and is the
sequence which we investigate in this paper. In \cite{Kla} it is
shown that the sequence of Stern polynomials has an interesting
connections with some combinatorial objects.

The aim of this paper is to give some arithmetic properties which
can be deduced from the definition of the sequence of Stern
polynomials.

In Section \ref{sec2} we gather basic properties of the sequence of
Stern polynomials. In particular in the Theorem \ref{symprop} we
prove a symmetric property of $B_{n}(t)$ (i. e., generalization of
the property $s(i)=s(2^{n}-i)$ for $1\leq i\leq 2^{n}-1$). Among
other things we also prove that for each $n\in\N_{+}$ the
polynomials $B_{n}(t), B_{n+1}(t)$ are coprime (Corollary
\ref{gcd}).

In Section \ref{sec3} we consider the generating function of the
sequence of Stern polynomials (Theorem \ref{genfunt}). With its use
we give various identities between Stern polynomials and show that
the sequence of the degrees of Stern polynomials are connected with
the sequence $\nu(n)$ which counts the occurrence of 1's in the
binary representation of the number $n$ (Corollary \ref{corl}).

In Section \ref{sec4} we investigate the properties of the sequence
$\{e(n)\}_{n=1}^{\infty}$, where $e(n)=\op{deg}_{t}B_{n}(t)$, which
is interesting in its own. In particular we compute the exact number
of Stern polynomials with degree equal to $n$. In order to prove
desired result we use the generating function of the sequence
$\{e(n)\}_{n=1}^{\infty}$ (Theorem \ref{genfundeg}). We also
investigate the extremal properties of the sequence
$\{e(n)\}_{n=1}^{\infty}$ (Theorem \ref{extremalofe}).

In Section \ref{sec5} we investigate special values of the
polynomial $B_{n}(t)$ and give some applications to the diophantine
equations of the form $B_{n+a}(t)-B_{n}(t)=c$, where $a\in\N$ is
fixed. Section \ref{sec6} is devoted to open problems and
conjectures which appear during our investigations and which we were
unable to prove.

\section{Basic properties}\label{sec2}

\begin{lem}\label{baslem1}
For all $a,n\in\N_{0}$ we have the identities
\begin{align*}
&B_{2^{a}n-1}(t)=\frac{t^a-1}{t-1}B_{n}(t)+B_{n-1}(t),\\
&B_{2^{a}n+1}(t)=\frac{t^a-1}{t-1}B_{n}(t)+B_{n+1}(t).
\end{align*}
\end{lem}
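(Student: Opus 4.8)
The plan is to argue by induction on $a$, holding $n$ fixed, and to treat the two identities in parallel since their inductive steps are structurally identical. Before starting, I would record the auxiliary fact $B_{2^{a}n}(t)=t^{a}B_{n}(t)$, which follows immediately by iterating the even-index rule $B_{2m}(t)=tB_{m}(t)$ a total of $a$ times (a one-line induction on $a$). This relation is what converts a factor of $2$ in the index into a factor of $t$ in the polynomial, and it is the engine of the whole argument.

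For the base case $a=0$ both identities are trivial: since $\frac{t^{0}-1}{t-1}=0$, the right-hand sides reduce to $B_{n-1}(t)$ and $B_{n+1}(t)$ respectively, which are exactly $B_{2^{0}n-1}(t)$ and $B_{2^{0}n+1}(t)$. For the inductive step, suppose both identities hold for a given $a$. The key observation is that $2^{a+1}n-1=2(2^{a}n)-1$ and $2^{a+1}n+1=2(2^{a}n)+1$ are both odd, so the odd-index rule applies directly and splits each polynomial into a sum of two terms whose indices halve cleanly:
\begin{align*}
B_{2^{a+1}n-1}(t)=B_{2^{a}n-1}(t)+B_{2^{a}n}(t),\qquad
B_{2^{a+1}n+1}(t)=B_{2^{a}n}(t)+B_{2^{a}n+1}(t).
\end{align*}
Substituting the inductive hypothesis for $B_{2^{a}n\mp 1}(t)$ and the auxiliary fact $B_{2^{a}n}(t)=t^{a}B_{n}(t)$, each right-hand side becomes $\left(\frac{t^{a}-1}{t-1}+t^{a}\right)B_{n}(t)$ plus the appropriate tail term $B_{n-1}(t)$ or $B_{n+1}(t)$.

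The only computation that needs care is the coefficient of $B_{n}(t)$, and this is where the argument really lives: one checks that
\begin{align*}
\frac{t^{a}-1}{t-1}+t^{a}=\frac{t^{a}-1+t^{a}(t-1)}{t-1}=\frac{t^{a+1}-1}{t-1},
\end{align*}
which is precisely the coefficient demanded for index $a+1$. This closes the induction and proves both identities simultaneously.

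I expect no genuine obstacle here; the entire difficulty is bookkeeping, namely verifying that the index splitting is valid (both $2^{a+1}n\pm 1$ are odd) and that the geometric-series coefficients telescope correctly. One mild edge case worth flagging is the first identity at $n=0$, where $B_{n-1}(t)=B_{-1}(t)$ is undefined, so that identity should be read for $n\geq 1$; the second identity is fine at $n=0$ since $B_{0}(t)=0$ kills the coefficient and leaves $B_{1}(t)=1=B_{2^{a}\cdot 0+1}(t)$.
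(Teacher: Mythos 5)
Your proof is correct and follows essentially the same route as the paper: induction on $a$, splitting $B_{2^{a+1}n\pm 1}(t)$ via the odd-index rule into $B_{2^{a}n}(t)+B_{2^{a}n\mp 1}(t)$, using $B_{2^{a}n}(t)=t^{a}B_{n}(t)$, and telescoping the geometric-series coefficient. The only differences are cosmetic improvements on your side: you carry out the second identity explicitly (the paper leaves it to the reader) and you correctly flag the $n=0$ edge case in the first identity, which the paper glosses over by assuming $n\geq 1$ in its induction.
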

\begin{proof}
We will proceed by induction on $a$ in order to prove the first
equality. The result is true for $a=0$ and $a=1$. Suppose that the
statement is true for $a$ and all $n\geq 1$. We have
\begin{align*}
B_{2^{a+1}n-1}(t)&=B_{2^{a}n}(t)+B_{2^{a}n-1}(t)=t^{a}B_{n}(t)+\frac{t^a-1}{t-1}B_{n}(t)+B_{n-1}(t)\\
                 &=\frac{t^{a+1}-1}{t-1}B_{n}(t)+B_{n-1}(t),
\end{align*}
and the first equality is proved.

Because the proof of the second equality goes in exactly the same
manner we leave it to the reader.
\end{proof}


\begin{cor}\label{+-expression}
For each $n\in\N$ we have
\begin{equation*}
B_{2^{n}-1}(t)=\frac{t^{n}-1}{t-1},\quad B_{2^{n}}(t)=t^{n},\quad
B_{2^{n}+1}(t)=\frac{t^n-1}{t-1}+t.
\end{equation*}
\end{cor}

One of the main properties of the Stern sequence is the symmetry
property: $s(2^{k}+i)=s(2^{k+1}-i)$ for $i=0,1,\ldots, 2^{k}$. It is
easy to see that the sequence of Stern polynomials do not satisfy
any identity of the form $B_{2^{k}+i}(t)-B_{2^{k+1}-i}(t)=f(t)$
where $f$ is a polynomial which is independent of $k$ and $i$.
Indeed, we have $B_{2^2+1}(t)-B_{2^3-1}(t)=-t(t-1)$ and
$B_{2^{2}+2^{2}-1}(t)-B_{2^3-2^2+1}(t)=t(t-1)$. However, these
identities and the others examined with the use of computer lead us
to conjecture the following theorem.

\begin{thm}\label{symprop}
The sequence of Stern polynomials satisfy the following symmetry
property:
\begin{equation*}
B_{2^{n+1}-i}(t)-B_{2^{n}+i}(t)=\begin{cases}
\begin{array}{ll}
  t(t-1)B_{2^{n-1}-i}(t)  &\mbox{for}\;i=0,1,\ldots,2^{n-1}, \\
  -t(t-1)B_{i-2^{n-1}}(t) &\mbox{for}\;i=2^{n-1}+1,\ldots, 2^{n}.
\end{array}
\end{cases}
\end{equation*}
\end{thm}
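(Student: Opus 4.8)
The plan is to induct on $n$, reducing the symmetry identity at level $n$ to the one at level $n-1$ by applying the defining Stern recursion directly. Write $D_n(i):=B_{2^{n+1}-i}(t)-B_{2^{n}+i}(t)$ for $0\le i\le 2^{n}$, so the claim is that $D_n(i)=t(t-1)B_{2^{n-1}-i}(t)$ on the first range and $D_n(i)=-t(t-1)B_{i-2^{n-1}}(t)$ on the second. First I would establish two ``folding'' recursions for $D_n$. When $i=2j$ is even, both $2^{n+1}-i$ and $2^{n}+i$ are even, so the rule $B_{2m}(t)=tB_m(t)$ gives $D_n(2j)=t\bigl(B_{2^{n}-j}(t)-B_{2^{n-1}+j}(t)\bigr)=tD_{n-1}(j)$. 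When $i=2j+1$ is odd, both indices are odd, and $B_{2m+1}(t)=B_m(t)+B_{m+1}(t)$ yields, after regrouping the four terms, $D_n(2j+1)=D_{n-1}(j)+D_{n-1}(j+1)$. Thus $D_n$ obeys a recursion of exactly the Stern shape in the index $i$, with the level dropping from $n$ to $n-1$.

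The base case $n=1$ is a direct check on the three values $i=0,1,2$ using Corollary \ref{+-expression} (e.g. $B_4(t)-B_2(t)=t^2-t=t(t-1)B_1(t)$ and $B_2(t)-B_4(t)=-t(t-1)B_1(t)$). For the inductive step I assume the formula at level $n-1$ and feed it into the two folding recursions. In the even case $i=2j$ I substitute the level-$(n-1)$ value of $D_{n-1}(j)$ and recombine using $B_{2m}(t)=tB_m(t)$: since $2^{n-1}-i=2(2^{n-2}-j)$ and $i-2^{n-1}=2(j-2^{n-2})$ are even, the factor $t$ produced by the recursion is exactly what converts $B_{2^{n-2}-j}(t)$ into $B_{2^{n-1}-i}(t)$ (resp. $B_{j-2^{n-2}}(t)$ into $B_{i-2^{n-1}}(t)$), matching the two branches according to whether $j\le 2^{n-2}$ or $j>2^{n-2}$. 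In the odd case $i=2j+1$ I insert the level-$(n-1)$ values of $D_{n-1}(j)$ and $D_{n-1}(j+1)$ and recombine with $B_{2m+1}(t)=B_m(t)+B_{m+1}(t)$, now using that $2^{n-1}-i$ and $i-2^{n-1}$ are odd; the two summands reassemble precisely into $B_{2^{n-1}-i}(t)$, or into $-B_{i-2^{n-1}}(t)$, as required.

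The main obstacle is the bookkeeping at the boundary between the two branches of the inductive hypothesis. In the odd case the indices $j$ and $j+1$ fed into $D_{n-1}$ can lie on opposite sides of the cutoff $2^{n-2}$, which happens exactly when $j=2^{n-2}$, i.e. $i=2^{n-1}+1$. There $D_{n-1}(j)$ is read from the first branch as $t(t-1)B_{0}(t)=0$ while $D_{n-1}(j+1)$ is read from the second branch, so the vanishing $B_0(t)=0$ is what makes the two formulas glue consistently and land on the correct value $-t(t-1)B_1(t)$. One must likewise confirm agreement at $i=2^{n-1}$ itself (where the first branch gives $B_0(t)=0$) and at the endpoints $i=0,2^{n}$. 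Once these boundary values are checked, every case of the recursion reproduces the claimed closed form, completing the induction.
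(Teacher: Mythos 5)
Your proof is correct and takes essentially the same route as the paper: induction on $n$, splitting $i$ into even and odd cases and using the Stern recursion to drop to level $n-1$ --- your folding relations $D_n(2j)=tD_{n-1}(j)$ and $D_n(2j+1)=D_{n-1}(j)+D_{n-1}(j+1)$ are precisely the computations the paper carries out inline. The only difference is presentational: you handle both branches simultaneously and make the gluing at the cutoff (via $B_0(t)=0$ at $i=2^{n-1},2^{n-1}+1$) explicit, whereas the paper proves the first branch, whose induction closes on itself, and declares the second analogous.
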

\begin{proof}
We start with the first equality. We proceed by induction on $n$ and
$0\leq i\leq 2^{n-1}$. The equality is true for $n=1$ and $i=0,1$.
Let us assume that it holds for $n$ and $0\leq i\leq 2^{n-1}.$ We
prove it for $n+1$ and $0\leq i\leq 2^{n}.$

If $i=2m$ then $0\leq m\leq 2^{n-1}$ and we have the sequence of
equalities
\begin{align*}
B_{2^{n+2}-i}(t)-B_{2^{n+1}+i}(t)&=B_{2^{n+2}-2m}(t)-B_{2^{n+1}+2m}(t)\\
                                 &=t(B_{2^{n+1}-m}(t)-B_{2^{n}+m}(t))\\
                                 &=t^2(t-1)B_{2^{n-1}-m}(t)=t(t-1)B_{2^{n}-2m}\\
                                 &=t(t-1)B_{2^{n}-i}(t).
\end{align*}

If $i=2m-1$ then $1\leq m\leq 2^{n-1}$ and we have the sequence of
equalities
\begin{align*}
B_{2^{n+2}-i}(t)-B_{2^{n+1}+i}(t)&=B_{2^{n+2}-2m+1}(t)-B_{2^{n+1}+2m-1}(t)\\
                                 &=B_{2(2^{n+1}-m)+1}(t)-B_{2(2^{n}+m-1)+1}(t)\\
                                 &=B_{2^{n+1}-m}(t)+B_{2^{n+1}-m+1}(t)-B_{2^{n}+m-1}(t)-B_{2^{n}+m}(t)\\
                                 &=t(t-1)B_{2^{n-1}-m}(t)-t(t-1)B_{2^{n-1}-(m-1)}(t)\\
                                 &=t(t-1)B_{2(2^{n-1}-m)+1}(t)=t(t-1)B_{2^{n}-(2m-1)}(t).
\end{align*}

The second equality can be proved in an analogous manner, so we left
this computation to the reader.
\end{proof}

\begin{thm}
Let $\mu(n)$ be the highest power of $2$ dividing $n$. Then the
following identity holds
\begin{equation*}
t^{\mu(n)}(B_{n+1}(t)+B_{n-1}(t))=(B_{2^{\mu(n)}+1}(t)+B_{2^{\mu(n)}-1}(t))B_{n}(t).
\end{equation*}
In particular, if $n$ is odd, we have that
\begin{equation*}
B_{n+1}(t)+B_{n-1}(t)=tB_{n}(t).
\end{equation*}
\end{thm}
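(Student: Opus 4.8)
The plan is to reduce everything to the odd case together with Lemma~\ref{baslem1} and Corollary~\ref{+-expression}. Write $k=\mu(n)$ and factor $n=2^{k}m$ with $m$ odd; iterating the even branch of the recursion gives $B_{n}(t)=t^{k}B_{m}(t)$, which will be the pivot of the argument.

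First I would dispose of the special (odd) case, since it is elementary and will be reused later. If $m$ is odd then $m\pm1$ are both even while $m$ itself is odd, so $B_{m+1}(t)=tB_{(m+1)/2}(t)$, $B_{m-1}(t)=tB_{(m-1)/2}(t)$, and $B_{m}(t)=B_{(m-1)/2}(t)+B_{(m+1)/2}(t)$; adding the first two and factoring out $t$ yields $B_{m+1}(t)+B_{m-1}(t)=tB_{m}(t)$. This already establishes the ``in particular'' clause (the case $k=0$), and it is precisely the identity I will insert into the general computation.

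Next I would apply Lemma~\ref{baslem1} with $a=k$ to the index $m$, obtaining $B_{2^{k}m+1}(t)=\frac{t^{k}-1}{t-1}B_{m}(t)+B_{m+1}(t)$ together with $B_{2^{k}m-1}(t)=\frac{t^{k}-1}{t-1}B_{m}(t)+B_{m-1}(t)$. Since $n=2^{k}m$, summing these and substituting the odd-case identity $B_{m+1}(t)+B_{m-1}(t)=tB_{m}(t)$ gives $B_{n+1}(t)+B_{n-1}(t)=\bigl(2\tfrac{t^{k}-1}{t-1}+t\bigr)B_{m}(t)$. Multiplying through by $t^{k}$ and using $t^{k}B_{m}(t)=B_{n}(t)$ turns the left-hand side into $t^{\mu(n)}\bigl(B_{n+1}(t)+B_{n-1}(t)\bigr)$ and leaves $\bigl(2\tfrac{t^{k}-1}{t-1}+t\bigr)B_{n}(t)$ on the right.

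It remains to recognise the polynomial factor multiplying $B_{n}(t)$, and this is where Corollary~\ref{+-expression} finishes the proof: it gives $B_{2^{k}-1}(t)=\frac{t^{k}-1}{t-1}$ and $B_{2^{k}+1}(t)=\frac{t^{k}-1}{t-1}+t$, whence $B_{2^{k}+1}(t)+B_{2^{k}-1}(t)=2\tfrac{t^{k}-1}{t-1}+t$ matches the bracketed factor exactly. The argument has essentially no obstacle beyond careful bookkeeping; the one idea worth flagging is to extract the power $t^{\mu(n)}$ from $B_{n}$ at the very start, since this is what converts the two $\tfrac{t^{k}-1}{t-1}$ contributions produced by Lemma~\ref{baslem1} into the closed form $B_{2^{\mu(n)}+1}+B_{2^{\mu(n)}-1}$.
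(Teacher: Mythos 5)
Your proof is correct and follows essentially the same route as the paper's: the paper likewise splits off the odd case via $B_{2m+2}(t)+B_{2m}(t)=t(B_{m+1}(t)+B_m(t))=tB_{2m+1}(t)$, applies Lemma~\ref{baslem1} with $a=\mu(n)$ to the odd part of $n$, and recognises the factor $2\tfrac{t^{\mu}-1}{t-1}+t$ as $B_{2^{\mu}+1}(t)+B_{2^{\mu}-1}(t)$ via Corollary~\ref{+-expression}. The only cosmetic difference is that you prove the odd-case identity up front and substitute it, whereas the paper re-derives it inline during the even-case computation.
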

\begin{proof}
First we consider the case $n$-odd. Then $n=2m+1$ for some $m\in\N$
and we have that $\mu(n)=0$. Now we find that
\begin{equation*}
B_{n+1}(t)+B_{n-1}(t)=B_{2m+2}(t)+B_{2m}(t)=tB_{m+1}(t)+tB_{m}(t)=tB_{2m+1}(t)=tB_{n}(t).
\end{equation*}
and our theorem follows in case of odd $n$.

If $n$ is even then $n=2^{\mu(n)}(2m+1)$ for some $m\in\N$, and to
shorten the notation let us put $\mu=\mu(n)$. We compute
\begin{align*}
t^{\mu}(B_{n+1}(t)&+B_{n-1}(t))\\
                  &=t^{\mu}(B_{2^{\mu}(2m+1)+1}(t)+B_{2^{\mu}(2m+1)-1}(t))\\
                              &=t^{\mu}\left(\frac{t^{\mu}-1}{t-1}B_{2m+1}(t)+B_{2m+2}(t)+\frac{t^{\mu}-1}{t-1}B_{2m+1}(t)+B_{2m}(t)\right)\\
                              &=t^{\mu}\left(2\frac{t^{\mu}-1}{t-1}B_{2m+1}(t)+t(B_{m+1}(t)+B_{m}(t))\right)\\
                              &=t^{\mu}\left(2\frac{t^{\mu}-1}{t-1}B_{2m+1}(t)+tB_{2m+1}(t)\right)\\
                              &=\left(2\frac{t^{\mu}-1}{t-1}+t\right)t^{\mu}B_{2m+1}(t)=\left(2\frac{t^{\mu}-1}{t-1}+t\right)B_{2^{\mu}(2m+1)}\\
                              &=(B_{2^{\mu(n)}+1}(t)+B_{2^{\mu(n)}-1}(t))B_{n}(t),
\end{align*}
and the theorem follows.
\end{proof}

The next interesting property of the Stern polynomials is contained
in the following.

\begin{thm}\label{baslem3}
For $0\leq k\leq 2^{n}-2$ we have
\begin{equation*}
B_{k+1}(t)B_{2^{n}-k}(t)-B_{k}(t)B_{2^{n}-k-1}(t)=t^{n}.
\end{equation*}
\end{thm}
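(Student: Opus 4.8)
The plan is to fix $t$, write $D_n(k):=B_{k+1}(t)B_{2^n-k}(t)-B_k(t)B_{2^n-k-1}(t)$ (suppressing the argument $t$ from now on), and induct on $n$, the claim being $D_n(k)=t^n$ for all $0\le k\le 2^n-2$. For the base case $n=1$ only $k=0$ occurs, and $D_1(0)=B_1B_2-B_0B_1=t$ directly from the definition. I would also record the endpoint value $D_n(0)=B_1B_{2^n}-B_0B_{2^n-1}=t^n$ using $B_{2^n}(t)=t^n$ from Corollary~\ref{+-expression}, since a closely related boundary index will reappear below.

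The heart of the argument is a reduction from level $n$ to level $n-1$ obtained by splitting on the parity of $k$ and expanding each of the four Stern polynomials by the defining recurrence. For $k=2j$ even one has $B_{2j+1}=B_j+B_{j+1}$, $B_{2^n-2j}=tB_{2^{n-1}-j}$, $B_{2j}=tB_j$, and, writing $2^n-2j-1=2(2^{n-1}-j)-1$, also $B_{2^n-2j-1}=B_{2^{n-1}-j-1}+B_{2^{n-1}-j}$. Substituting these gives
\begin{align*}
D_n(2j)&=(B_j+B_{j+1})\,tB_{2^{n-1}-j}-tB_j\,(B_{2^{n-1}-j-1}+B_{2^{n-1}-j})\\
&=t\bigl(B_{j+1}B_{2^{n-1}-j}-B_jB_{2^{n-1}-j-1}\bigr)=t\,D_{n-1}(j),
\end{align*}
the mixed term $tB_jB_{2^{n-1}-j}$ cancelling. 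I expect the odd case $k=2j+1$ to collapse in exactly the same way: there $B_{k+1}=tB_{j+1}$ and $B_{2^n-k-1}=tB_{2^{n-1}-j-1}$ are the even terms while $B_k$ and $B_{2^n-k}$ split, and after the analogous cancellation one again obtains $D_n(2j+1)=t\,D_{n-1}(j)$. Thus in both parities $D_n(k)=t\,D_{n-1}(\lfloor k/2\rfloor)$.

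Applying the inductive hypothesis $D_{n-1}(j)=t^{n-1}$ then yields $D_n(k)=t^n$, completing the induction. The one point requiring care, and the place I expect the real (if modest) obstacle to lie, is matching up the index ranges: the even substitution $k=2j$ runs over $0\le j\le 2^{n-1}-1$, whereas the inductive hypothesis only covers $0\le j\le 2^{n-1}-2$, so the top value $j=2^{n-1}-1$ (that is, $k=2^n-2$) escapes it. This single leftover case is handled directly, since $D_{n-1}(2^{n-1}-1)=B_{2^{n-1}}B_1-B_{2^{n-1}-1}B_0=t^{n-1}$ by Corollary~\ref{+-expression} and $B_0=0$. Beyond this the work is bookkeeping: checking that every index appearing stays nonnegative, and reading off the correct parity of each of $k,\,k+1,\,2^n-k,\,2^n-k-1$ so that the right branch of the recurrence (in particular $B_{2m-1}=B_{m-1}+B_m$) is applied in each slot.
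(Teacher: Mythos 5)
Your proposal is correct and follows essentially the same route as the paper: induction on $n$ with a parity split of $k$, expanding all four factors by the defining recurrence so that the mixed term cancels and the expression reduces to $t$ times the level-$(n-1)$ determinant. The only cosmetic difference is the treatment of the boundary index (the paper disposes of $k=2^{n+1}-2$ with a remark that it "is easy to show", while you compute $D_{n-1}(2^{n-1}-1)=t^{n-1}$ explicitly via Corollary~\ref{+-expression}), which is a fair and complete way to close the same gap.
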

\begin{proof}
We proceed by induction on $n$ and $0\leq k\leq 2^{n}-2$. The
identity is true for $n=1, k=0$ and $n=2,k=0,1,2$. Let us suppose
that our identity holds for given $n$ and $0\leq k\leq 2^{n}-2$. We
will prove that the identity holds for $n+1$.

If $0\leq k\leq 2^{n+1}-2$ and $k$ is even then we have $k=2i$ and
$0\leq i\leq 2^{n}-1$. If $i=2^{n}-1$ then it is easy to show that
our identity holds, so we can assume that $i\leq 2^{n}-2$. Then we
have
\begin{align*}
B_{k+1}(t)&B_{2^{n+1}-k}(t)-B_{k}(t)B_{2^{n+1}-k-1}(t)\\
          &=B_{2i+1}(t)B_{2^{n+1}-2i}(t)-B_{2i}(t)B_{2^{n+1}-2i-1}(t)\\
          &=tB_{i}(t)B_{2^{n}-i}(t)+tB_{i+1}(t)B_{2^{n}-i}(t)-tB_{i}(t)B_{2^{n}-i-1}(t)-tB_{i}(t)B_{2^{n}-i}(t)\\
          &=t(B_{i+1}(t)B_{2^{n}-i}(t)-B_{i}(t)B_{2^{n}-i-1}(t))=t^{n+1},
\end{align*}
where the last equality follows from the induction hypothesis.

If $0\leq k\leq 2^{n+1}-2$ and $k$ is odd then we have $k=2i+1$ and
$0\leq i\leq 2^{n}-2$. We have
\begin{align*}
B_{k+1}(t)&B_{2^{n+1}-k}(t)-B_{k}(t)B_{2^{n+1}-k-1}(t)\\
          &=B_{2i+2}(t)B_{2^{n+1}-2i-1}(t)-B_{2i+1}(t)B_{2^{n+1}-2i-2}(t)\\
          &=tB_{i+1}(t)(B_{2^{n}-i-1}(t)+B_{2^n-i}(t))-tB_{2^{n}-i-1}(t)(B_{i}(t)+B_{i+1}(t))\\
          &=t(B_{i+1}(t)B_{2^{n}-i}(t)-B_{i}(t)B_{2^{n}-i-1}(t))=t^{n+1},
\end{align*}
and the theorem follows.
\end{proof}

As an immediate consequence of Theorem \ref{baslem3} we get the
following result.

\begin{cor}\label{gcd}
\begin{enumerate}
\item For each $n\in\N$ we have $\gcd(B_{n}(t),B_{n+1}(t))=1.$

\item
If $a,b\in\N$ are odd and $a+b=2^{n}$ for some $n$ then
$\gcd(B_{a}(t),B_{b}(t))=1$.

\end{enumerate}
\end{cor}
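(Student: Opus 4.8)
The plan is to treat Theorem~\ref{baslem3} as a B\'ezout-type identity: a common divisor of any two of the Stern polynomials occurring in it must divide the right-hand side $t^N$, and the only work left is to exclude a spurious factor of $t$. The single auxiliary fact I need is that $t\mid B_m(t)$ exactly when $m$ is even, equivalently $B_m(0)=0$ for even $m$ and $B_m(0)=1$ for odd $m$. This is immediate by induction on the defining recurrence: $B_{2m}(0)=0\cdot B_m(0)=0$, while for odd $m=2j+1$ exactly one of $j,j+1$ is odd, so $B_{2j+1}(0)=B_j(0)+B_{j+1}(0)=1$.

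For part~(1) I would fix $n\ge 1$, choose $N$ with $2^N\ge n+2$ so that $k=n$ satisfies $0\le k\le 2^N-2$, and invoke Theorem~\ref{baslem3} with exponent $N$ and $k=n$ to get $B_{n+1}(t)B_{2^N-n}(t)-B_n(t)B_{2^N-n-1}(t)=t^N$. Any common divisor $d(t)$ of $B_n(t)$ and $B_{n+1}(t)$ divides each term on the left, hence divides $t^N$, so $d(t)=c\,t^{j}$ for some constant $c$ and $j\ge 0$. Since $n$ and $n+1$ have opposite parity, one of $B_n,B_{n+1}$ is odd-indexed and has nonzero constant term, so $t\nmid d(t)$; thus $j=0$ and $d$ is constant, giving $\gcd(B_n,B_{n+1})=1$. (The case $n=0$ is trivial as $B_1=1$.)

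For part~(2), let $a,b$ be odd with $a+b=2^n$. If one of them equals $1$ the claim is immediate since $B_1=1$, so I may assume $a,b\ge 3$; then $a=2^n-b\le 2^n-3\le 2^n-2$, so $k=a$ is admissible and Theorem~\ref{baslem3} (with $2^n-a=b$) yields $B_{a+1}(t)B_b(t)-B_a(t)B_{b-1}(t)=t^n$. A common divisor $d(t)$ of $B_a(t)$ and $B_b(t)$ divides both terms on the left and hence divides $t^n$; because $a$ is odd, $B_a(0)=1$, so $t\nmid B_a(t)$ and therefore $t\nmid d(t)$, forcing $d$ to be constant and $\gcd(B_a,B_b)=1$.

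I do not expect a genuine obstacle here, since the identity of Theorem~\ref{baslem3} supplies the decisive linear combination equal to a pure power of $t$. The only points demanding care are purely organizational: choosing $N$ large enough in part~(1) so that the index $k=n$ falls in the permitted range, and separating the boundary cases $a=1$ or $b=1$ in part~(2) where $k=a$ would otherwise exceed $2^n-2$. Everything else reduces to the parity observation that an odd-indexed Stern polynomial has constant term $1$ and so cannot be divisible by $t$.
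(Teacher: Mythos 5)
Your proof is correct and follows essentially the same route as the paper: both treat Theorem~\ref{baslem3} as a B\'ezout-type identity forcing any common divisor to be a power of $t$, and then exclude the factor $t$ via the fact that $B_m(0)=1$ for odd $m$. The only differences are minor refinements on your part --- you prove the constant-term fact by a short induction instead of citing Theorem~\ref{specialvalues} (which the paper only proves later, a forward reference), and you are more careful about keeping the index $k$ inside the admissible range $0\le k\le 2^N-2$.
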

\begin{proof}
(1) From  Theorem \ref{baslem3} we deduce that if a polynomial
$h\in\Z[t]$ divides $\gcd(B_{n}(t),B_{n+1}(t))$ for some $n$ then
$h(t)=t^{m}$ for some $0\leq m\leq n$. If $m\geq 1$ we get that
$B_{n}(0)=B_{n+1}(0)=0$, but from the Theorem \ref{specialvalues}
(which will be proved later) we know that for odd $k$ we have
$B_{k}(0)=1$. This implies that $m=0$ and the result follows.

(2) If $a+b=2^{n}$ then $b=2^{n}-a$ and using similar reasoning as
in the previous case we deduce that if
$h(t)|\gcd(B_{a}(t),B_{2^{n}-a}(t))$ then $h(t)=t^{m}$ for some $m$.
But $a$ is odd, thus $B_{a}(0)=1$ and we get that $m=0$ and
$h(t)=1$.
\end{proof}

Now we give some extremal properties of the sequence of Stern
polynomials. More precisely, for given positive real number $a$ we
ask what is the maximum (minimum) of $B_{i}(a)$ for $i\in [2^{n-1},
2^{n}]$. We prove the following theorem.

\begin{thm}\label{extremalofB}
\begin{enumerate}
\item Let $a$ be a real number satisfying $a>2$. Then we have
\begin{equation*}
  M_{n}(a)=\op{max}\{B_{i}(a):\;i\in [2^{n-1},2^{n}]\}=a^{n}=B_{2^{n}}(a).
\end{equation*}

\item Let $a\in (0,2)$. Then we have
\begin{equation*}
m_{n}(a)=\op{min}\{B_{i}(a):\;i\in [2^{n-1},2^{n}]\}=
\begin{cases}
\begin{array}{ll}
  a^{n} & \mbox{for}\quad a\in (0,1], \\
  a^{n-1} & \mbox{for}\quad a\in (1,2].\\
\end{array}
\end{cases}
\end{equation*}

\end{enumerate}
\end{thm}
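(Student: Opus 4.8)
The plan is to prove both parts by induction on $n$, exploiting the two halves of the defining recursion together with the closed forms $B_{2^{n}}(a)=a^{n}$ and $B_{2^{n-1}}(a)=a^{n-1}$ from Corollary \ref{+-expression}. The crucial structural remark is that every index $i\in[2^{n},2^{n+1}]$ is either even, say $i=2j$ with $j\in[2^{n-1},2^{n}]$, in which case $B_{i}(a)=aB_{j}(a)$, or odd, say $i=2j+1$ with $j\in[2^{n-1},2^{n}-1]$, in which case $B_{i}(a)=B_{j}(a)+B_{j+1}(a)$ with both $j$ and $j+1$ lying in $[2^{n-1},2^{n}]$. Thus every value $B_{i}(a)$ on the block $[2^{n},2^{n+1}]$ is controlled by values on the previous block $[2^{n-1},2^{n}]$, so a uniform bound on that block (the inductive hypothesis) propagates.

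For part (1), assume $a>2$ and suppose $B_{i}(a)\le a^{n}$ for all $i\in[2^{n-1},2^{n}]$. For even $i=2j$ we get $B_{i}(a)=aB_{j}(a)\le a^{n+1}$, with equality at $i=2^{n+1}$ since $B_{2^{n+1}}(a)=a^{n+1}$; for odd $i=2j+1$ we get $B_{i}(a)=B_{j}(a)+B_{j+1}(a)\le 2a^{n}<a^{n+1}$, where the strict inequality is exactly where the hypothesis $a>2$ is used. Hence $M_{n+1}(a)=a^{n+1}$, and the base case $n=1$ (where $\{B_{1}(a),B_{2}(a)\}=\{1,a\}$) is immediate.

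For part (2) I would run two parallel inductions. When $a\in(0,1]$ the claim is $m_{n}(a)=a^{n}$: assuming $B_{i}(a)\ge a^{n}$ on $[2^{n-1},2^{n}]$, the even indices give $B_{2j}(a)=aB_{j}(a)\ge a^{n+1}$ and the odd indices give $B_{2j+1}(a)\ge 2a^{n}\ge a^{n+1}$, the latter because $a^{n+1}=a\cdot a^{n}\le a^{n}\le 2a^{n}$; the minimum $a^{n+1}$ is attained at $i=2^{n+1}$. When $a\in(1,2]$ the claim is $m_{n}(a)=a^{n-1}$: assuming $B_{i}(a)\ge a^{n-1}$ on $[2^{n-1},2^{n}]$, the even indices give $B_{2j}(a)\ge a^{n}$ (attained at $i=2^{n}$, where $B_{2^{n}}(a)=a^{n}$) and the odd indices give $B_{2j+1}(a)\ge 2a^{n-1}\ge a^{n}$, the last step using $a\le 2$; thus $m_{n+1}(a)=a^{n}=a^{(n+1)-1}$. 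The base case $n=1$ gives $\min\{1,a\}$, equal to $a$ when $a\le 1$ and to $1$ when $a>1$, matching both formulas.

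The computations are routine; the only real subtlety, and what makes the two parts split exactly at $a=2$ and $a=1$, is matching the correct inductive quantity ($a^{n}$ versus $a^{n-1}$) to the direction of the comparison between $a^{n}$, $a^{n+1}$ and $2a^{n}$. The strictness $a>2$ in part (1) is genuinely needed: at $a=2$ one has $B_{2^{n}+1}(2)=2^{n}+1>2^{n}$, so the maximum is no longer $a^{n}$, which is why the value $a=2$ is excluded there but included (as the boundary equality $2a^{n-1}=a^{n}$) in part (2).
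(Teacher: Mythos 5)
Your proof is correct and follows essentially the same route as the paper: induction on $n$, splitting the block $[2^{n},2^{n+1}]$ into even indices $2j$ (which contribute $aB_{j}(a)$) and odd indices $2j+1$ (which contribute $B_{j}(a)+B_{j+1}(a)\lessgtr 2\cdot$extremum), with the comparison between $2$ and $a$ deciding the direction. The only differences are presentational: you treat the $a\in(0,1]$ subcase explicitly where the paper calls it ``completely analogous,'' and you sensibly use the non-strict bound $B_{j}(a)+B_{j+1}(a)\le 2M_{n}(a)<aM_{n}(a)$ rather than the paper's unjustified strict inequality, which is all that is needed.
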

\begin{proof}
(1) In order to prove the identity for $M_{n}(a)$ we proceed by
induction on $n$. For $n=1$ we have $M_{1}(a)=\op{max}\{1,a\}=a$.
Similarly for $n=2$ we have
$M_{2}(a)=\op{max}\{a,a+1,a^{2}\}=a^{2}$. Thus our theorem is true
for $n=1,2.$ Let us suppose that $M_{n}(a)=a^{n}$. We will show that
$M_{n+1}(a)=aM_{n}(a)$. We have:
\begin{align*}
M_{n+1}&(a)=\op{max}\{B_{i}(a):\;i\in [2^{n},2^{n+1}]\}\\
       &=\op{max}\{\op{max}\{B_{2i}(a):\;i\in [2^{n-1},2^{n}]\}, \op{max}\{B_{2i+1}(a):\;i\in [2^{n-1}, 2^{n}-1]\}\}\\
       &=\op{max}\{a\op{max}\{B_{i}(a):\;i\in [2^{n-1},2^{n}]\}, \op{max}\{B_{2i+1}(a):\;i\in [2^{n-1}, 2^{n}-1]\}\}.
\end{align*}
Now from the induction hypothesis we have
\begin{align*}
\op{max}&\{B_{2i+1}(a):\;i\in [2^{n-1},2^{n}-1]\}\\
        &=\op{max}\{B_{i+1}(a)+B_{i}(a):\;i\in [2^{n-1},2^{n}-1]\}<2M_{n}(a).
\end{align*}
Because $a>2$ we get that $M_{n+1}(a)=aM_{n}(a)$ and our theorem
follows.

\bigskip
(2) In order to prove the second identity we consider only the case
of $a\in(1,2]$ because the case of $a\in(0,1]$ is completely
analogous. We proceed by induction on $n$. We take $a\in(1,2]$ and
note that for $n=1$ we have $m_{1}(a)=\op{min}\{1,a\}=1$. Similarly,
for $n=2$ we have $m_{2}(a)=\op{min}\{a,a+1,a^{2}\}=a$. Thus our
theorem is true for $n=1,2.$ Let us suppose that $m_{n}(a)=a^{n-1}$
for a given $n$. We will show that $m_{n+1}(a)=am_{n}(a)$. We have:
\begin{align*}
m_{n+1}(a)&=\op{min}\{B_{i}(a):\;i\in [2^{n},2^{n+1}]\}\\
          &=\op{min}\{\op{min}\{B_{2i}(a):\;i\in [2^{n-1},2^{n}]\}, \op{min}\{B_{2i+1}(a):\;i\in [2^{n-1}, 2^{n}-1]\}\}\\
          &=\op{max}\{a\op{min}\{B_{i}(a):\;i\in [2^{n-1},2^{n}]\}, \op{min}\{B_{2i+1}(a):\;i\in [2^{n-1},
          2^{n}-1]\}\}.
\end{align*}
Now from the induction hypothesis we have
\begin{align*}
\op{min}&\{B_{2i+1}(a):\;i\in[2^{n-1},2^{n}-1]\}\\
        &=\op{min}\{B_{i+1}(a)+B_{i}(a):\;i\in [2^{n-1},2^{n}-1]\}>2m_{n}(a).
\end{align*}
Because $a\in(1,2]$ we get that $m_{n+1}(a)=am_{n}(a)$ and our
theorem follows.
\end{proof}

From the above theorem we easily deduce the following.

\begin{cor}\label{ineqforB}
For $a\in(0,1]$ we have that $B_{n}(a)\geq a^{\lg n}=n^{\lg a}$. For
$a\in(1,2]$ we have $B_{n}(a)\geq \frac{1}{a}n^{\lg a}$. Finally,
for $a>2$ we have an inequality $B_{n}(a)\leq n^{\lg a},$ where
$\lg$ stands for logarithm in base 2.
\end{cor}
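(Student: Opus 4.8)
The plan is to peel off the dyadic recursion and reduce all three inequalities to a single comparison with the power function $n\mapsto n^{\lg a}$, exploiting the identity $a^{\lg n}=n^{\lg a}$ (both equal $2^{\lg a\,\lg n}$). Writing $c=\lg a$, so that $a=2^{c}$, the three regimes $a\in(0,1]$, $a\in(1,2]$, $a>2$ correspond exactly to $c\le 0$, $0<c\le 1$, and $c>1$. I would prove, by strong induction on $n$, that $B_{n}(a)\ge n^{c}$ when $c\le 1$ (i.e.\ $0<a\le 2$) and $B_{n}(a)\le n^{c}$ when $c>1$ (i.e.\ $a>2$); the stated bound $B_{n}(a)\ge\frac1a n^{\lg a}$ for $a\in(1,2]$ is then immediate, since $\tfrac1a<1$.

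The induction has two steps dictated by the defining recurrence. For even $n=2m$ we have $B_{2m}(a)=aB_{m}(a)$, and because $a=2^{c}$ the inductive bound is preserved with equality: $aB_m(a)\gtrless a\,m^{c}=2^{c}m^{c}=(2m)^{c}$, matching the target in both directions. For odd $n=2m+1$ we have $B_{2m+1}(a)=B_m(a)+B_{m+1}(a)$, so after applying the inductive hypothesis the whole statement reduces to the (super/sub)additivity of $t\mapsto t^{c}$, namely whether $m^{c}+(m+1)^{c}$ exceeds or is exceeded by $(2m+1)^{c}=(m+(m+1))^{c}$. Here the direction flips exactly at $c=1$: for $c\le 1$ one has $(x+y)^{c}\le x^{c}+y^{c}$, giving the lower bound, while for $c\ge 1$ one has $(x+y)^{c}\ge x^{c}+y^{c}$, giving the upper bound. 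The base case $n=1$ is trivial, and the even step from $m=1$ supplies $n=2$.

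The one point requiring care --- and the reason the statement splits precisely at $a=2$ --- is this elementary convexity inequality for $t^{c}$ together with the bookkeeping of its direction against the sign of $c-1$; in the odd step for $c\le 0$ the even cruder estimate $(x+y)^{c}\le x^{c}\le x^{c}+y^{c}$ already suffices. No genuinely hard estimate is involved, and the work is entirely in aligning the three cases with the two break-points $c=0,1$.

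Finally, I would remark how this dovetails with Theorem \ref{extremalofB}: the induction is sharp exactly at $n=2^{k}$, where $B_{2^{k}}(a)=a^{k}=(2^{k})^{c}$ reproduces the extremal values found there. In fact the weaker factor $\tfrac1a$ in the middle regime is precisely what one loses by reading the bound off the dyadic block extrema of that theorem: for $a\in(1,2]$ and $N\in[2^{k-1},2^{k}]$ the estimate $B_{N}(a)\ge m_{k}(a)=a^{k-1}\ge a^{\lg N-1}=\tfrac1a N^{\lg a}$ (valid since $\lg N\le k$ and $a>1$) yields the stated inequality directly, which explains why the middle case carries the extra $\tfrac1a$ while the two outer cases are sharp.
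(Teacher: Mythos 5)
Your proof is correct, but it is a genuinely different argument from the paper's. The paper disposes of this corollary in one line: ``This is a simple consequence of the identities obtained in Theorem \ref{extremalofB}'', i.e.\ it reads the bounds off the dyadic block extrema $m_k(a)$, $M_k(a)$ --- exactly the mechanism you describe in your final paragraph. Your main argument instead re-runs the recursion directly: with $c=\lg a$, the even step $B_{2m}(a)=aB_m(a)$ preserves the bound with equality, and the odd step reduces to sub/superadditivity of $t\mapsto t^{c}$, with the direction flipping at $c=1$. What your route buys is real: the block-extremum deduction, taken literally, only localizes $B_n(a)$ between $a^{k-1}$ and $a^{k}$ on $[2^{k-1},2^{k}]$, while the target $a^{\lg n}=n^{\lg a}$ sweeps through that same range; so for the two outer cases it yields only $B_{n}(a)\geq a\,n^{\lg a}$ (for $a\in(0,1]$) and $B_{n}(a)\leq a\,n^{\lg a}$ (for $a>2$), each off by a factor $a$ from the sharp statement --- the $\tfrac1a$ in the paper's middle case is precisely this unavoidable loss, as you observe. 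Your induction closes that gap and proves the sharp outer inequalities as stated; it even proves the stronger bound $B_{n}(a)\geq n^{\lg a}$ for all $a\in(0,2]$, showing the factor $\tfrac1a$ in the middle case is not actually needed. The price is that your argument is longer and self-contained rather than a corollary of Theorem \ref{extremalofB}, but it is the more rigorous derivation of the statement as written, and it explains conceptually why the break-points sit at $a=1$ ($c=0$) and $a=2$ ($c=1$).
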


\section{Generating function and its consequences}\label{sec3}

In this section we give a closed formula for the ordinary generating
function of the sequence of Stern polynomials and then use its
properties in order to obtain several interesting identities
satisfied by Stern polynomials. So let us define
\begin{equation*}
B(t,x)=\sum_{n=0}^{\infty}B_{n}(t)x^n.
\end{equation*}

Now using the recurrence relations satisfied by the polynomials
$B_{n}(t)$ we can write
\begin{align*}
B(t,x)&=\sum_{n=0}^{\infty}B_{2n}(t)x^{2n}+\sum_{n=0}^{\infty}B_{2n+1}(t)x^{2n+1}\\
      &=t\sum_{n=0}^{\infty}B_{n}(t)x^{2n}+x\sum_{n=0}^{\infty}B_{n}(t)x^{2n}+\sum_{n=0}^{\infty}B_{n+1}(t)x^{2n+1}\\
      &=\left(t+x+\frac{1}{x}\right)B(t,x^2).
\end{align*}
From the above computation we get that the function $B(t,x)$
satisfies the functional equation
\begin{equation}\label{funceq}
(1+tx+x^2)B(t,x^2)=xB(t,x).
\end{equation}

We prove the following theorem.

\begin{thm}\label{genfunt}
The sequence of Stern polynomials has the generating function
\begin{equation}\label{genfun}
B(t,x)=x\prod_{n=0}^{\infty}(1+tx^{2^n}+x^{2^{n+1}}).
\end{equation}
\end{thm}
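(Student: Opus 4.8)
The plan is to solve the functional equation \eqref{funceq} by iteration. Rewriting it as $B(t,x)=\frac{1+tx+x^{2}}{x}B(t,x^{2})$ and then substituting this same relation into itself with $x$ replaced successively by $x^{2},x^{4},\ldots$, I would obtain after $N$ steps the closed expression
\[
B(t,x)=x^{1-2^{N}}\Bigl(\prod_{n=0}^{N-1}(1+tx^{2^{n}}+x^{2^{n+1}})\Bigr)B(t,x^{2^{N}}),
\]
where the accumulated denominator $\prod_{n=0}^{N-1}x^{2^{n}}=x^{2^{N}-1}$ produces the prefactor $x^{1-2^{N}}$. This reduces the theorem to understanding the behaviour of the right-hand side as $N\to\infty$.

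The second step is to pass to the limit in the ring of formal power series $\Z[t][[x]]$ equipped with the $x$-adic topology, in which convergence means that each individual coefficient eventually stabilizes. Since $B_{0}(t)=0$ and $B_{1}(t)=1$, the series $B(t,x)$ begins with $x$, so $B(t,x^{2^{N}})=x^{2^{N}}+(\text{terms of degree}\ge 2^{N+1})$, whence $x^{1-2^{N}}B(t,x^{2^{N}})=x+(\text{terms of degree}\ge 2^{N}+1)$. Therefore this factor tends to $x$ as $N\to\infty$. At the same time, each finite product converges to the infinite product $\prod_{n=0}^{\infty}(1+tx^{2^{n}}+x^{2^{n+1}})$, which is a well-defined element of $\Z[t][[x]]$ because the $n$-th factor differs from $1$ only in degrees $\ge 2^{n}$. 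Taking $N\to\infty$ in the displayed identity then yields the asserted formula \eqref{genfun}.

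The step requiring the most care is the limit itself: one must justify that the product of the two convergent sequences on the right converges coefficientwise to the product of their limits. This is where the $x$-adic framework does the work, since for any fixed degree $d$ only finitely many of the factors $(1+tx^{2^{n}}+x^{2^{n+1}})$ contribute to degrees $\le d$, and the error term $x^{1-2^{N}}B(t,x^{2^{N}})-x$ vanishes in all degrees $\le d$ once $2^{N}>d$. I expect the formal bookkeeping of these degree estimates to be the only genuine obstacle, the rest being a routine telescoping of the functional equation. As an independent check on the answer, one can verify directly that the proposed product $P(t,x):=x\prod_{n\ge0}(1+tx^{2^{n}}+x^{2^{n+1}})$ satisfies $P(t,x^{2})=\frac{x^{2}}{1+tx+x^{2}}P(t,x)$, i.e. the same equation \eqref{funceq}, and has the same initial terms, which confirms the identification.
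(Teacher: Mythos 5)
Your proof is correct, but it follows a genuinely different route from the paper's. The paper first proves an exact finite-product expansion (Theorem \ref{exprodt}): by induction on $n$, using the defining recurrences of $B_n(t)$, it shows
$F_{n}(t,x)=\sum_{i=1}^{2^{n+1}}(B_{i}(t)+B_{2^{n+1}-i}(t)x^{2^{n+1}})x^{i}$,
and then obtains Theorem \ref{genfunt} \emph{analytically}: for fixed real $t$ and $|x|<1$ it invokes the growth bounds on $|B_n(t)|$ from Corollary \ref{ineqforB} to show that the series converges and that the remainder $R_N(t,x)=\sum_{i=1}^{2^{N+1}}B_{2^{N+1}-i}(t)x^{2^{N+1}+i}$ tends to $0$ as $N\to\infty$. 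You instead telescope the functional equation (\ref{funceq}) and pass to the limit $x$-adically in formal power series, never computing the coefficients of the partial products and never needing any analytic estimate. Your argument is shorter and, notably, it squarely resolves the uniqueness issue the paper itself flags (``it is not clear that this is the only solution of (\ref{funceq})''): the telescoping shows that any power-series solution of (\ref{funceq}) of the form $x+O(x^2)$ must equal the product. Two small points of care on your side: the prefactor $x^{1-2^N}$ involves negative powers, so one should either work in the Laurent series ring $\Z[t]((x))$ or note that divisibility by $x^{2^k}$ at each step is guaranteed by (\ref{funceq}) itself; and your conclusion is an identity of formal power series rather than of functions on $|x|<1$ (which is what one usually wants anyway, and the analytic version then follows by routine convergence). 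What the paper's longer route buys is the finite identity (\ref{exprod}) as a standalone tool: it is reused later, e.g.\ Corollary \ref{sumrows} is obtained by setting $x=1$ there, a substitution that is meaningless in the infinite product and inaccessible from your formal limit alone.
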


One can easily check that the function defined by the right hand
side of (\ref{genfun}) satisfies the equation (\ref{funceq}).
However it is not clear that this is the only solution of
(\ref{funceq}). In order to prove this we will need some result
concerning the polynomials defined by product
\begin{equation*}
F_{n}(t,x)=x\prod_{i=0}^{n}(1+tx^{2^i}+x^{2^{i+1}}).
\end{equation*}

Our approach is similar to the one used in the paper \cite{DilSto}
where another generalization of the Stern diatomic sequence is
considered. We prove the following expansion.

\begin{thm}\label{exprodt}
For any $n\in\N$ we have
\begin{equation}\label{exprod}
F_{n}(t,x)=\sum_{i=1}^{2^{n+1}}(B_{i}(t)+B_{2^{n+1}-i}(t)x^{2^{n+1}})x^{i}.
\end{equation}

\end{thm}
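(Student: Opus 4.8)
The plan is to argue by induction on $n$, and the key preliminary observation is that the finite products $F_{n}(t,x)$ satisfy the same functional equation as $B(t,x)$. Indeed, separating the first factor and reindexing gives
\begin{equation*}
F_{n}(t,x)=x(1+tx+x^2)\prod_{j=1}^{n}(1+tx^{2^{j}}+x^{2^{j+1}}),\qquad F_{n-1}(t,x^2)=x^2\prod_{j=1}^{n}(1+tx^{2^{j}}+x^{2^{j+1}}),
\end{equation*}
so that $xF_{n}(t,x)=(1+tx+x^2)F_{n-1}(t,x^2)$, the exact finite analogue of (\ref{funceq}). The base case $n=0$ is the direct check $F_{0}(t,x)=x+tx^2+x^3$, which agrees with the right-hand side of (\ref{exprod}) for $n=0$.

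For the inductive step I would assume (\ref{exprod}) for $n-1$, substitute $x\mapsto x^2$, and feed it into the finite functional equation. Dividing by $x$ yields
\begin{equation*}
F_{n}(t,x)=\sum_{i=1}^{2^{n}}\bigl(B_{i}(t)+B_{2^{n}-i}(t)x^{2^{n+1}}\bigr)\bigl(x^{2i-1}+tx^{2i}+x^{2i+1}\bigr).
\end{equation*}
I would then split this into the ``low'' part $L$ (the $B_{i}(t)$ terms) and the ``high'' part $H$ (the $B_{2^{n}-i}(t)x^{2^{n+1}}$ terms), and read off coefficients of each power of $x$, the goal being to recover $\sum_{j}B_{j}(t)x^{j}$ from $L$ and $\sum_{j}B_{2^{n+1}-j}(t)x^{2^{n+1}+j}$ from $H$.

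The coefficient matching is driven entirely by the defining recurrence $B_{2\ell}(t)=tB_{\ell}(t)$ and $B_{2\ell+1}(t)=B_{\ell}(t)+B_{\ell+1}(t)$: the even power $x^{2\ell}$ in $L$ receives $tB_{\ell}(t)=B_{2\ell}(t)$ from the middle term $tx^{2i}$ with $i=\ell$, while an odd power $x^{2\ell+1}$ collects $B_{\ell}(t)$ from $x^{2i+1}$ ($i=\ell$) and $B_{\ell+1}(t)$ from $x^{2i-1}$ ($i=\ell+1$), summing to $B_{2\ell+1}(t)$. The same computation with $B_{2^{n}-i}(t)$ in place of $B_{i}(t)$ handles $H$.

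The one delicate point — and the step I expect to be the main obstacle — is the boundary bookkeeping where $L$ and $H$ overlap. The low part produces one leftover term $B_{2^{n}}(t)x^{2^{n+1}+1}$ beyond the intended range $1\le j\le 2^{n+1}$, and the high part's lowest term only supplies $B_{2^{n}-1}(t)x^{2^{n+1}+1}$ in place of the required $B_{2^{n+1}-1}(t)x^{2^{n+1}+1}$. These reconcile precisely because $B_{2^{n+1}-1}(t)=B_{2^{n}}(t)+B_{2^{n}-1}(t)$, again an instance of the odd-index recurrence; one also checks that the top term of $H$ has coefficient $B_{0}(t)=0$ and so drops out. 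Once these edge cases are verified, $L+H$ equals the right-hand side of (\ref{exprod}) for $n$, completing the induction. As an alternative route avoiding the functional equation, one could instead multiply (\ref{exprod}) for $n$ by $(1+tx^{2^{n+1}}+x^{2^{n+2}})$ directly, but that approach forces the auxiliary identity $B_{2^{a}+m}(t)=B_{2^{a}-m}(t)+tB_{m}(t)$ (provable by a separate induction on $a$ split according to the parity of $m$), which the functional-equation method sidesteps.
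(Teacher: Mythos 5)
Your proof is correct and follows essentially the same route as the paper's: an induction driven by the finite functional equation $xF_{n}(t,x)=(1+tx+x^2)F_{n-1}(t,x^2)$, with the coefficient matching done through the Stern recurrences $B_{2\ell}(t)=tB_{\ell}(t)$, $B_{2\ell+1}(t)=B_{\ell}(t)+B_{\ell+1}(t)$, and the same boundary cancellations (the vanishing $B_{0}(t)=0$ term and the recombination $B_{2^{n+1}-1}(t)=B_{2^{n}}(t)+B_{2^{n}-1}(t)$, which in the paper appears as an add-and-subtract of equal boundary terms). The only cosmetic difference is direction: the paper starts from the claimed expansion at level $n+1$ and decomposes it into $\bigl(t+x+\tfrac{1}{x}\bigr)F_{n}(t,x^2)$, whereas you expand $\tfrac{1}{x}(1+tx+x^2)F_{n-1}(t,x^2)$ and collect coefficients upward.
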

\begin{proof}
We proceed by induction on $n$. For $n=0$ we have by definition,
\begin{equation*}
F_{0}(t,x)=x+tx^2+x^3=(B_{1}(t)+B_{1}(t)x^2)x+tx^2.
\end{equation*}
Now suppose that (\ref{exprod}) holds for $n$, i.e., we have
\begin{equation}\label{parass}
F_{n}(t,x)=\sum_{i=1}^{2^{n+1}}(B_{i}(t)+B_{2^{n+1}-i}(t)x^{2^{n+1}})x^{i}=:f_{n}(t,x).
\end{equation}
Multiplying  both sides of this identity by
$1+tx^{2^{n+1}}+x^{2^{n+2}}$ we see that in order to get the
statement it is enough to show that $f_{n+1}(t,x)=F_{n+1}(t,x).$ We
find that:
\begin{align*}
f&_{n+1}(t,x)=\sum_{i=1}^{2^{n+2}}B_{i}(t)x^{i}+\sum_{i=1}^{2^{n+2}}B_{2^{n+2}-i}(t)x^{2^{n+2}+i}+\sum_{i=1}^{2^{n+1}}B_{2i}(t)x^{2i}\\
&+\sum_{i=1}^{2^{n+1}}B_{2i-1}(t)x^{2i-1}+\sum_{i=1}^{2^{n+1}}B_{2^{n+2}-2i}(t)x^{2^{n+2}+2i}+\sum_{i=1}^{2^{n+1}}B_{2^{n+2}-2i+1}(t)x^{2^{n+2}+2i-1}\\
            &=t\sum_{i=1}^{2^{n+1}}(B_{i}(t)+B_{2^{n+1}-i}(t)x^{2^{n+2}})x^{2i}+\sum_{i=1}^{2^{n+1}}(B_{i}(t)+B_{2^{n+1}-i}(t)x^{2^{n+2}})x^{2i-1}\\
            &+\sum_{i=1}^{2^{n+1}}(B_{i-1}(t)+B_{2^{n+1}-i+1}(t)x^{2^{n+2}})x^{2i-1}.\\
         \end{align*}
Now let us note that the first term on the left hand side of the
last equality is equal to $tF_{n}(t,x^2)$, the second is equal to
$F_{n}(t,x^2)$. Finally, substituting $i=j+1$ in the third term we
get
\begin{align*}
\sum_{i=1}^{2^{n+1}}&(B_{i-1}(t)+B_{2^{n+1}-i+1}(t)x^{2^{n+2}})x^{2i-1}\\
                    &=B_{0}(t)+B_{2^{n+1}}(t)x^{2^{n+2}+1}-(B_{2^{n+1}}(t)+B_{0}(t)x^{2^{n+2}})x^{2^{n+2}+1}\\
                    &+x\sum_{j=1}^{2^{n+1}}(B_{j}(t)+B_{2^{n+1}-j}(t)x^{2^{n+2}})x^{2j}\\
                    &=x\sum_{j=1}^{2^{n+1}}(B_{j}(t)+B_{2^{n+1}-j}(t)x^{2^{n+2}})x^{2j}=xF_{n}(t,x^2).
\end{align*}
Our reasoning shows that
\begin{equation*}
f_{n+1}(t,x)=\frac{1}{x}(1+tx+x^2)F_{n}(t,x^2)=F_{n+1}(t,x),
\end{equation*}
and theorem follows.
\end{proof}

Our first application of the above theorem will be a proof of
Theorem \ref{genfunt}.

\begin{proof}[Proof of the Theorem \ref{genfunt}] First of all let us
note that the generating series for the sequence of Stern
polynomials is convergent for any fixed $|x|<1$. This is an easy
consequence of the inequality $|B_{n}(t)|\leq n$ for $|t|\leq 2$
which follows from the inequality $|B_{n}(t)|\leq B_{n}(2)=n$ and
$|B_{n}(t)|\leq n^{\lg|t|}$ which holds for $|t|>2$ and follows from
the Corollary \ref{ineqforB}. On the other hand we know from the
theory of infinite products that the product
$\prod_{n=0}^{\infty}(1+tx^{2^n}+x^{2^{n+1}})$ is convergent for a
given $t\in\R$ and any $x$ satisfying inequality $|x|<1$. Now in
order to prove the identity (\ref{genfun}), in view of
(\ref{exprod}), it is enough to show that the sum
\begin{equation*}
R_{N}(t,x):=\sum_{i=1}^{2^{N+1}}B_{2^{N+1}-i}(t)x^{2^{N+1}+i},
\end{equation*}
converges to $0$ as $N\rightarrow\infty$, for $|x|<1$ and $t\in\R$.
Using now the estimate $|B_{2^{N+1}-i}(t)|\leq (2^{N+1})^{\lg|t|},$
we get
\begin{equation*}
|R_{N}(t,x)|<|x|^{2^{N+1}}2^{(N+1)\lg
|t|}\sum_{i=1}^{2^{N+1}}|x|^{i}<|x|^{2^{N+1}}2^{(N+1)\lg
|t|}\frac{|x|}{1-|x|}.
\end{equation*}
It is clear that under our assumption concerning $x$ the left hand
side of the above inequality tends to $0$ with $N\rightarrow\infty.$
\end{proof}

One of the many interesting properties of the Stern diatomic
sequence $s(n)=B_{n}(1)$ is the existence of a closed formula for
the sum of all elements from the first $k$ rows of the diatomic
array. More precisely we have:
$\sum_{i=1}^{2^{k}}s(n)=\frac{3^{k}+1}{2}.$ It is a natural question
if a similar result can be obtained for the sequence of Stern
polynomials. As we will see such a generalization can be obtained
with the help of the expansion (\ref{exprod}). More precisely we
have the following.

\begin{cor}\label{sumrows}
For any $k\geq 0$ we have
\begin{equation*}
\sum_{i=1}^{2^{k}}B_{i}(t)=\frac{1}{2}((t+2)^{k}+t^{k}).
\end{equation*}
\end{cor}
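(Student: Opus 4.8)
The plan is to specialize the product expansion of Theorem~\ref{exprodt} to $x=1$. First I would evaluate the left-hand side: since $F_{n}(t,x)=x\prod_{i=0}^{n}(1+tx^{2^i}+x^{2^{i+1}})$, setting $x=1$ collapses every factor to $1+t+1=t+2$, so that $F_{n}(t,1)=(t+2)^{n+1}$.

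Next I would set $x=1$ on the right-hand side of (\ref{exprod}), obtaining $\sum_{i=1}^{2^{n+1}}\bigl(B_{i}(t)+B_{2^{n+1}-i}(t)\bigr)$. Writing $S_{m}:=\sum_{i=1}^{2^{m}}B_{i}(t)$, the first piece is exactly $S_{n+1}$. For the second piece I would substitute $j=2^{n+1}-i$, turning it into $\sum_{j=0}^{2^{n+1}-1}B_{j}(t)$; using $B_{0}(t)=0$ together with the value $B_{2^{n+1}}(t)=t^{n+1}$ from Corollary~\ref{+-expression}, this second sum equals $S_{n+1}-t^{n+1}$.

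Combining these, the identity of Theorem~\ref{exprodt} evaluated at $x=1$ reads $(t+2)^{n+1}=2S_{n+1}-t^{n+1}$, whence $S_{n+1}=\frac{1}{2}\bigl((t+2)^{n+1}+t^{n+1}\bigr)$. Putting $k=n+1$ gives the claimed formula for every $k\geq 1$, and the remaining case $k=0$ is immediate since $\sum_{i=1}^{1}B_{i}(t)=B_{1}(t)=1=\frac{1}{2}(1+1)$.

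I do not expect a genuine obstacle: the whole argument is essentially a one-line evaluation once the bookkeeping for the reindexed sum is in place. The only point requiring care is the off-by-one in the second sum—the substitution $j=2^{n+1}-i$ produces the range $0\le j\le 2^{n+1}-1$ rather than $1\le j\le 2^{n+1}$, so that the top term $B_{2^{n+1}}(t)=t^{n+1}$ must be subtracted to express it through $S_{n+1}$—together with checking the boundary case $k=0$ separately, since Theorem~\ref{exprodt} is stated for $n\in\N$ and directly yields only the values $k\ge 1$.
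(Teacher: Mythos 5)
Your proof is correct and is essentially the paper's own argument: the paper likewise sets $x=1$ (with $n=k-1$) in the expansion of Theorem~\ref{exprodt}, identifies the two resulting sums as $2\sum_{i=1}^{2^k}B_i(t)-t^k$, and solves for the sum. Your version is in fact slightly more careful than the paper's, since you spell out the reindexing $j=2^{n+1}-i$ (including subtracting the top term $B_{2^{n+1}}(t)=t^{n+1}$) and check the boundary case $k=0$ separately.
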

\begin{proof}
In order to prove this we set $x=1$ and $n=k-1$ in the expansion
(\ref{exprod}) and get
\begin{equation*}
(t+2)^k=\sum_{i=0}^{2^{k}}(B_{i}(t)+B_{2^{k}-i}(t))-B_{2^{k}}(t)=2\sum_{i=1}^{2^{k}}B_{i}(t)(t)-t^{k},
\end{equation*}
and the result follows.
\end{proof}

A simple application of Corollary \ref{sumrows} leads to the
following.

\begin{cor}\label{altsumrows}
For any $k\geq 1$ we have
\begin{equation*}
\sum_{i=1}^{2^{k}}(-1)^{i}B_{i}(t)=\frac{t-2}{2}((t+2)^{k-1}+t^{k-1}))+t^{k-1}.
\end{equation*}
\end{cor}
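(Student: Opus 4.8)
The plan is to reduce the alternating sum to the ordinary row sum of Corollary \ref{sumrows} by separating the even- and odd-indexed terms. Write $S_k := \sum_{i=1}^{2^{k}} B_i(t) = \frac{1}{2}((t+2)^{k}+t^{k})$ for the quantity supplied by that corollary, and let $A_k$ denote the alternating sum we wish to evaluate. Since $(-1)^{i}$ equals $+1$ on even indices and $-1$ on odd indices, we have $A_k = E_k - O_k$, where $E_k$ and $O_k$ are the sums of $B_i(t)$ over the even, respectively odd, indices $i\in[1,2^{k}]$. Because $E_k + O_k = S_k$, we may write $O_k = S_k - E_k$ and hence $A_k = 2E_k - S_k$; so it suffices to compute $E_k$.

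First I would compute $E_k$ explicitly. The even indices in $[1,2^{k}]$ are exactly $i=2j$ for $j=1,\ldots,2^{k-1}$, and the defining recurrence gives $B_{2j}(t)=tB_{j}(t)$. Therefore $E_k = \sum_{j=1}^{2^{k-1}} tB_{j}(t) = t\,S_{k-1} = \frac{t}{2}((t+2)^{k-1}+t^{k-1})$, where in the last step I apply Corollary \ref{sumrows} with $k-1$ in place of $k$; this is legitimate since that corollary is stated for all $k\geq 0$, and the case $k=1$ (which uses $S_0 = B_1(t) = 1$) is covered.

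It then remains to combine these. Substituting into $A_k = 2E_k - S_k$ gives $A_k = t((t+2)^{k-1}+t^{k-1}) - \frac{1}{2}((t+2)^{k}+t^{k})$. Using $(t+2)^{k} = (t+2)(t+2)^{k-1}$ and $t^{k} = t\cdot t^{k-1}$ and grouping the two geometric-type terms, the coefficient of $(t+2)^{k-1}$ becomes $t-\frac{t+2}{2}=\frac{t-2}{2}$, while the coefficient of $t^{k-1}$ becomes $t-\frac{t}{2}=\frac{t}{2}$, which one checks is exactly the stated right-hand side $\frac{t-2}{2}((t+2)^{k-1}+t^{k-1})+t^{k-1}$. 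The only point needing any attention is the bookkeeping of the index range when rewriting $E_k$ in terms of $S_{k-1}$; once that is in place, the remainder is routine algebra, so there is no genuine obstacle in this argument.
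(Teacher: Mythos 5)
Your argument is correct and is exactly the intended route: the paper gives no explicit proof of this corollary, stating only that it is a simple application of Corollary \ref{sumrows}, and your even/odd splitting, which uses $B_{2j}(t)=tB_j(t)$ to reduce the alternating sum to $2tS_{k-1}-S_k$ with $S_k$ as in Corollary \ref{sumrows}, is precisely that application. The closing algebra also checks out, since $\frac{t-2}{2}+1=\frac{t}{2}$ reconciles your expression with the stated right-hand side.
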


The following theorem summarizes some elementary manipulations of
the generating function for the sequence of Stern polynomials.
\begin{thm}\label{genfunrel} Let $B(t,x)$ be a generating function for
the sequence for Stern polynomials. Then we have:
\begin{equation}\label{genidentity1}
(1+tx+x^2)B(t,x^2)=xB(t,x),
\end{equation}


\begin{equation}\label{genidentity3}B(-t,x)B(t,x)=B(2-t^2,x^2),\end{equation}

\begin{equation}\label{genidentity4}
B\left(-t^2-\frac{1}{t^2},x^2\right)=\frac{1}{\left(1-\frac{1}{t^2}x\right)(1-t^2x)}B\left(-t^2-\frac{1}{t^2},x\right).
\end{equation}
\end{thm}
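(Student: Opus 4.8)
The plan is to establish all three identities in Theorem~\ref{genfunrel} by working directly with the product representation \eqref{genfun} from Theorem~\ref{genfunt}, rather than through coefficient comparison, since the product form trivializes manipulations that would be painful term-by-term. Identity \eqref{genidentity1} is just the functional equation \eqref{funceq} already derived, so nothing new is needed there. For \eqref{genidentity3}, I would write $B(-t,x)B(t,x) = x^2\prod_{n=0}^{\infty}(1-tx^{2^n}+x^{2^{n+1}})(1+tx^{2^n}+x^{2^{n+1}})$ and multiply the two factors in each term of the product. The key algebraic step is that $(1+x^{2^{n+1}})^2 - t^2x^{2^{n+1}} = 1 + (2-t^2)x^{2^{n+1}} + x^{2^{n+2}}$, which is exactly the $n$-th factor of the product defining $B(2-t^2, x^2)$ once one notes $x^{2^{n+1}} = (x^2)^{2^n}$ and $x^{2^{n+2}} = (x^2)^{2^{n+1}}$. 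Matching the prefactor $x^2 = (x^2)^1$ against the leading $x$ of $B(\cdot, x^2)$ completes the identity.

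For \eqref{genidentity4}, I would substitute $t \mapsto -t^2 - \frac{1}{t^2}$ into the product, so that the $n$-th factor becomes $1 - \left(t^2 + \frac{1}{t^2}\right)x^{2^n} + x^{2^{n+1}}$. The crucial observation is that this quadratic in $x^{2^n}$ factors as $\left(1 - t^2 x^{2^n}\right)\left(1 - \frac{1}{t^2}x^{2^n}\right)$, since the two roots multiply to $1$ and sum to $t^2 + \frac{1}{t^2}$. Hence
\begin{equation*}
B\left(-t^2-\tfrac{1}{t^2},x\right) = x\prod_{n=0}^{\infty}\left(1 - t^2 x^{2^n}\right)\left(1 - \tfrac{1}{t^2}x^{2^n}\right).
\end{equation*}
Writing down the analogous product for $B\left(-t^2-\frac{1}{t^2}, x^2\right)$ and forming the ratio, nearly everything telescopes: the factors $\bigl(1 - t^2 x^{2^{n+1}}\bigr)$ and $\bigl(1 - \frac{1}{t^2}x^{2^{n+1}}\bigr)$ appearing in $B(\cdot, x)$ for each $n \geq 0$ coincide with those in $B(\cdot, x^2)$ for each $n \geq 1$, after reindexing $x^{2^{n+1}} = (x^2)^{2^n}$.

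The main obstacle I anticipate is the bookkeeping in the telescoping for \eqref{genidentity4}: one must track carefully that the $n=0$ factors of the $x$-product, namely $(1 - t^2 x)(1 - \frac{1}{t^2}x)$, are precisely the terms that survive and do not appear in the $x^2$-product, yielding the stated rational prefactor $\frac{1}{(1 - \frac{1}{t^2}x)(1 - t^2 x)}$. I would want to verify convergence of the relevant products and the validity of cancelling infinitely many factors, which is justified by absolute convergence of $\prod(1 + tx^{2^n} + x^{2^{n+1}})$ for $|x| < 1$ as already noted in the proof of Theorem~\ref{genfunt}. Apart from that analytic caveat, the argument is purely formal manipulation of the product, so once the two factorization identities above are in hand the theorem follows directly.
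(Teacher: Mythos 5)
Your route---working directly with the product representation (\ref{genfun})---is legitimate and differs from the paper's, which treats the first identities as direct manipulations of the generating function and obtains (\ref{genidentity4}) from (\ref{genidentity1}) together with the specialization $B\left(t+\frac{1}{t},x\right)B\left(-t-\frac{1}{t},x\right)=B\left(-t^2-\frac{1}{t^2},x^2\right)$ of (\ref{genidentity3}). Your treatment of (\ref{genidentity1}) (it is just (\ref{funceq})) and of (\ref{genidentity3}) (multiply the two products factor by factor, using $(1-tx^{2^n}+x^{2^{n+1}})(1+tx^{2^n}+x^{2^{n+1}})=1+(2-t^2)x^{2^{n+1}}+x^{2^{n+2}}$ and matching the prefactors $x\cdot x=x^2$) is correct and complete, granted the convergence remarks you defer to the proof of Theorem \ref{genfunt}.

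The gap sits exactly at the point you flagged as ``bookkeeping'' in (\ref{genidentity4}), and it is not mere bookkeeping: the prefactors do not cancel. With $u=-t^2-\frac{1}{t^2}$, your factorization gives
\begin{equation*}
B(u,x)=x\prod_{n=0}^{\infty}\left(1-t^2x^{2^n}\right)\left(1-\tfrac{1}{t^2}x^{2^n}\right),
\qquad
B(u,x^2)=x^2\prod_{n=1}^{\infty}\left(1-t^2x^{2^n}\right)\left(1-\tfrac{1}{t^2}x^{2^n}\right),
\end{equation*}
so the telescoped ratio is
\begin{equation*}
B(u,x^2)=\frac{x}{\left(1-\tfrac{1}{t^2}x\right)\left(1-t^2x\right)}\,B(u,x),
\end{equation*}
with a numerator $x$ (coming from $x^2/x$) that your write-up silently drops. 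Hence your argument does not prove the identity as displayed; it proves the identity with an extra factor $x$. In fact that extra factor is correct, and (\ref{genidentity4}) as printed is misstated: its left side begins with $x^2$ while its printed right side begins with $x$; substituting $t\mapsto u$ into (\ref{genidentity1}) gives $(1-t^2x)\left(1-\frac{1}{t^2}x\right)B(u,x^2)=xB(u,x)$ directly; and it is the $x$-corrected version that is needed to deduce (\ref{identity4}) in Theorem \ref{identities}, since $\frac{t^2}{t^4-1}\sum_{n\geq 0}\left(t^{2n}-t^{-2n}\right)x^n=\frac{x}{(1-t^2x)(1-\frac{1}{t^2}x)}$. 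So fix the prefactor computation and state the corrected identity (noting the misprint), rather than asserting that the telescoping yields the printed prefactor, which it does not.
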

\begin{proof}
The first three displayed identities are easy consequences of the
manipulation of the generating function for Stern polynomials. The
fourth identity follows from the first identity and the fact that
\begin{equation*}
B\left(t+\frac{1}{t},x\right)B\left(-t-\frac{1}{t},x\right)=B\left(-t^2-\frac{1}{t^2},x^2\right).
\end{equation*}
\end{proof}

We use Theorem \ref{genfunrel} to get several interesting identities
related to the sequence of Stern polynomials. However, before we do
that we recall an useful power series expansion. We have
\begin{equation*}
\frac{1}{1-2tx+x^2}=\sum_{n=0}^{\infty}U_{n}(t)x^{n},
\end{equation*}
where $U_{n}(t)$ is the Chebyshev polynomial of the second kind.



Now, we are ready to prove the following theorem.
\begin{thm}\label{identities}
The following identities holds:
\begin{equation}\label{identity1}
\sum_{i=0}^{n}B_{i}(t)U_{n-i}(-\frac{t}{2})=\begin{cases}\begin{array}{ll}
                                                0,                   & \mbox{if}\quad 2|n  \\
                                                 B_{\frac{n+1}{2}}(t), & \mbox{otherwise}
                                               \end{array}
\end{cases}
\end{equation}


\begin{equation}\label{identity3}
\sum_{i=0}^{n}B_{i}(t)B_{n-i}(-t)=\begin{cases}\begin{array}{ll}
                                                 B_{\frac{n}{2}}(2-t^2), & \mbox{if}\quad 2|n  \\
                                                 0, & \mbox{otherwise}
                                               \end{array}
\end{cases}
\end{equation}

\begin{equation}\label{identity4}
\frac{t^2}{t^4-1}\sum_{i=0}^{n}\left(t^{2i}-\frac{1}{t^{2i}}\right)B_{n-i}\left(-t^2-\frac{1}{t^2}\right)=\begin{cases}\begin{array}{ll}
                                                 B_{\frac{n}{2}}\left(-t^2-\frac{1}{t^2}\right), & \mbox{if}\quad 2|n  \\
                                                 0, & \mbox{otherwise}
                                               \end{array}
\end{cases}
\end{equation}
\end{thm}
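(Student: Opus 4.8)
The plan is to read each left-hand side as a single Cauchy product, that is, as the coefficient of $x^n$ in a product of two power series in $x$, and then to use the functional relations of Theorem \ref{genfunrel} to collapse that product into a series supported on exponents of one fixed parity. Throughout I write $B(t,x)=\sum_n B_n(t)x^n$ and use the expansion $\frac{1}{1-2ux+x^2}=\sum_{m\ge0}U_m(u)x^m$ recalled before the statement.

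For \eqref{identity1} I would take $u=-t/2$, so that $\sum_{m\ge0}U_m(-t/2)x^m=\frac{1}{1+tx+x^2}$; the left-hand side is then $[x^n]\bigl(B(t,x)/(1+tx+x^2)\bigr)$. The functional equation \eqref{genidentity1} rearranges to $B(t,x)/(1+tx+x^2)=B(t,x^2)/x$, so the sum equals $[x^{n+1}]B(t,x^2)=[x^{n+1}]\sum_k B_k(t)x^{2k}$, which vanishes unless $n$ is odd and then equals $B_{(n+1)/2}(t)$. For \eqref{identity3} the left-hand side is directly $[x^n]\bigl(B(t,x)B(-t,x)\bigr)$, and \eqref{genidentity3} identifies this with $[x^n]B(2-t^2,x^2)$, which vanishes unless $n$ is even and then equals $B_{n/2}(2-t^2)$.

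The remaining identity \eqref{identity4} needs one extra computation, which I expect to be the only non-routine step. Writing $s=-t^2-\tfrac{1}{t^2}$, the left-hand side is $\frac{t^2}{t^4-1}[x^n]\bigl(C(x)B(s,x)\bigr)$ with $C(x)=\sum_{i\ge0}(t^{2i}-t^{-2i})x^i$. Splitting $C(x)=\frac{1}{1-t^2x}-\frac{1}{1-t^{-2}x}$ and combining over a common denominator gives $C(x)=\frac{(t^2-t^{-2})x}{(1-t^2x)(1-t^{-2}x)}$; since $t^2-t^{-2}=\frac{t^4-1}{t^2}$, the prefactor $\frac{t^2}{t^4-1}$ cancels exactly and the full coefficient series reduces to $\frac{x}{(1-t^2x)(1-t^{-2}x)}$. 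The key observation is the factorization $1+sx+x^2=(1-t^2x)(1-t^{-2}x)$, so applying \eqref{genidentity1} at parameter $s$ yields $\frac{x\,B(s,x)}{(1-t^2x)(1-t^{-2}x)}=B(s,x^2)$. Hence the left-hand side is $[x^n]B(s,x^2)$, which vanishes unless $n$ is even and then equals $B_{n/2}(s)$.

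The geometric and Chebyshev expansions are routine, so the hard part will be \eqref{identity4}: one must carry out the partial-fraction simplification of $C(x)$ and, crucially, recognize the factorization $1+sx+x^2=(1-t^2x)(1-t^{-2}x)$ matching the denominator of $C(x)$ to the quadratic appearing in the functional equation \eqref{genidentity1}. Once that match is made, every product collapses to $B(\cdot,x^2)$ (or to $x^{-1}B(\cdot,x^2)$), and the split according to the parity of $n$ follows immediately from which powers of $x$ survive.
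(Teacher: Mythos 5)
Your proposal is correct. For (\ref{identity1}) and (\ref{identity3}) it is exactly the paper's argument: read the sums as Cauchy products and apply (\ref{genidentity1}) and (\ref{genidentity3}) respectively. The only genuine divergence is in (\ref{identity4}). The paper expands $\frac{1}{(1-t^2x)(1-\frac{1}{t^2}x)}$ in partial fractions and then invokes (\ref{genidentity4}); you never use (\ref{genidentity4}) at all, but instead apply (\ref{genidentity1}) at the shifted parameter $s=-t^2-\frac{1}{t^2}$ together with the factorization $1+sx+x^2=(1-t^2x)(1-\frac{1}{t^2}x)$. Your route is the better one, because (\ref{genidentity4}) as printed contains a misprint: the relation your factorization yields is $B(s,x^2)=\frac{x}{(1-t^2x)(1-\frac{1}{t^2}x)}B(s,x)$, with a factor $x$ in the numerator which (\ref{genidentity4}) omits. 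The paper's proof nevertheless reaches the correct statement because its displayed partial-fraction expansion carries a compensating slip: the series $\frac{t^2}{t^4-1}\sum_{n\geq 0}\left(t^{2n}-\frac{1}{t^{2n}}\right)x^n$ is the expansion of $\frac{x}{(1-t^2x)(1-\frac{1}{t^2}x)}$, not of $\frac{1}{(1-t^2x)(1-\frac{1}{t^2}x)}$, so the two missing factors of $x$ cancel. In your self-contained derivation both factors are in the right place, at the small extra cost of having to notice the factorization of $1+sx+x^2$. Finally, your manipulations are legitimate as formal power series in $x$ (or as convergent series for $|x|<1$, which is how the paper justifies working with $B(t,x)$), since $1+tx+x^2$ has constant term $1$ and $B(t,x^2)$ is divisible by $x^2$, so the division by $x$ in your treatment of (\ref{identity1}) causes no difficulty.
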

\begin{proof}
Before we prove our theorem let us recall that if
$A(x)=\sum_{n=0}^{\infty}a_{n}x^{n}$ and
$B(x)=\sum_{n=0}^{\infty}b_{n}x^{n}$ then
$A(x)B(x)=\sum_{n=0}^{\infty}c_{n}x^{n}$ where
$c_{n}=\sum_{i=0}^{n}a_{i}b_{n-i}$.

Now it is easy to see that the identity (\ref{identity1}) follows
from the identity (\ref{genidentity1}) given in Theorem
\ref{genfunrel}.
Similarly from the identity (\ref{genidentity3}) we get
(\ref{identity3}). Finally, in order to get the last identity we
note that
\begin{equation*}
\frac{1}{(1-t^{2}x)(1-\frac{1}{t^2}x)}=\frac{t^{4}}{t^4-1}\left(\frac{t^{4}}{1-t^{2}x}-\frac{1}{1-\frac{1}{t^2}x}\right)=\frac{t^2}{t^4-1}\sum_{n=0}^{\infty}\left(t^{2n}-\frac{1}{t^{2n}}\right)x^{n}
\end{equation*}
and use the identity (\ref{genidentity4}).
\end{proof}

Another interesting property of the sequence of Stern polynomials is
contained in the following.

\begin{thm}
Let $\nu(n)$ denote the number of 1's in the unique binary
representation of $n$. Then we have the identity
\begin{equation*}
B_{n+1}\left(t+\frac{1}{t}\right)=\sum_{i=0}^{n}t^{\nu(n-i)-\nu(i)}.
\end{equation*}
\end{thm}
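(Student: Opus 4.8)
The plan is to exploit the product formula for the generating function from Theorem~\ref{genfunt} together with the substitution $t\mapsto t+\frac1t$, which makes each quadratic factor split into two linear factors. First I would observe the elementary factorization
\[
1+\left(t+\frac{1}{t}\right)y+y^2=(1+ty)\left(1+\frac{1}{t}y\right),
\]
so that substituting $t+\frac1t$ for $t$ in (\ref{genfun}) and applying this with $y=x^{2^n}$ gives
\[
B\left(t+\frac1t,x\right)=x\prod_{n=0}^{\infty}\left(1+tx^{2^n}\right)\left(1+\frac1t x^{2^n}\right).
\]

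Next I would invoke the standard binary-digit expansion of such an infinite product: since every nonnegative integer $m$ has a unique binary representation, selecting in each factor either the term $1$ or the term $ux^{2^n}$ yields
\[
\prod_{n=0}^{\infty}\left(1+ux^{2^n}\right)=\sum_{m=0}^{\infty}u^{\nu(m)}x^m,
\]
where the exponent of $u$ records the number of selected factors, i.e.\ the number of $1$'s in the binary representation of $m$. Applying this once with $u=t$ and once with $u=\frac1t$ gives
\[
B\left(t+\frac1t,x\right)=x\left(\sum_{j=0}^{\infty}t^{\nu(j)}x^j\right)\left(\sum_{k=0}^{\infty}t^{-\nu(k)}x^k\right).
\]

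Then the identity follows by comparing coefficients of $x^{n+1}$. On the left this coefficient is $B_{n+1}(t+\frac1t)$, while on the right the leading factor $x$ shifts the Cauchy product by one, so the coefficient of $x^{n+1}$ equals the coefficient of $x^n$ in the product of the two series, namely $\sum_{j+k=n}t^{\nu(j)-\nu(k)}$. Reindexing by $i=k$ and $j=n-i$ produces exactly $\sum_{i=0}^{n}t^{\nu(n-i)-\nu(i)}$, as claimed.

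The routine but necessary points are convergence and the legitimacy of these rearrangements. The convergence of both the product and the series for $|x|<1$ was already established in the proof of Theorem~\ref{genfunt}, and for fixed real $t\neq 0$ the substitution $t\mapsto t+\frac1t$ keeps us in the same regime, so regrouping the absolutely convergent double series into a Cauchy product is justified. The only genuinely substantive step is the quadratic factorization, which is precisely what forces the two $\nu$-terms to appear with opposite signs; everything else is bookkeeping. I therefore expect no real obstacle, since the hard analytic work was already carried out in establishing the product formula~(\ref{genfun}).
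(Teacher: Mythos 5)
Your proposal is correct and follows essentially the same route as the paper: the paper also substitutes $t+\frac{1}{t}$ into the product formula of Theorem~\ref{genfunt}, factors each quadratic term as $(1+tx^{2^n})(1+\frac{1}{t}x^{2^n})$, invokes the classical expansion $\prod_{n=0}^{\infty}(1+ux^{2^n})=\sum_{m=0}^{\infty}u^{\nu(m)}x^{m}$, and reads off the coefficient of $x^{n+1}$ from the Cauchy product. Your write-up is in fact slightly more careful, since you make the quadratic factorization and the convergence considerations explicit, whereas the paper leaves both implicit.
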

\begin{proof}
In order to prove the identity let us recall that if $\nu(n)$
denotes number of 1's in the unique binary representation of $n$
then we have an identity \cite{Rez}
\begin{equation*}
\prod_{n=0}^{\infty}(1+tx^{2^n})=\sum_{n=0}^{\infty}t^{\nu(n)}x^{n}.
\end{equation*}
Now let us note that
\begin{align*}
B\left(t+\frac{1}{t},x\right)&=\sum_{n=0}^{\infty}B_{n+1}\left(t+\frac{1}{t}\right)x^{n+1}=x\prod_{n=0}^{\infty}(1+tx^{2^{n}})\prod_{n=0}^{\infty}\left(1+\frac{1}{t}x^{2^{n}}\right)\\
                             &=\sum_{n=0}^{\infty}\left(\sum_{i=0}^{n}t^{\nu(n-i)-\nu(i)}\right)x^{n+1},
\end{align*}
and we get the desired identity.
\end{proof}

\begin{cor}\label{corl}
Let $\nu(n)$ denote the number of 1's in the unique binary
representation of $n$ and let $e(n)=\op{deg}B_{n}(t)$. Then we have:
\begin{align*}
e(n+1)=&\op{max}\{\nu(n-i)-\nu(i):\;i=0,\ldots,n\},\\
-e(n+1)=&\op{min}\{\nu(n-i)-\nu(i):\;i=0,\ldots,n\}.
\end{align*}
\end{cor}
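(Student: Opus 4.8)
The plan is to read off both equalities directly from the identity just established, namely $B_{n+1}(t+\frac{1}{t})=\sum_{i=0}^{n}t^{\nu(n-i)-\nu(i)}$, by comparing the highest (and lowest) powers of $t$ occurring on each side. First I would analyze the right-hand side. It is a Laurent polynomial in $t$, and after collecting terms with equal exponents every coefficient is a positive integer, since each summand $t^{\nu(n-i)-\nu(i)}$ carries coefficient $+1$. Consequently no cancellation can take place, and the top power of $t$ appearing on the right is exactly $\op{max}\{\nu(n-i)-\nu(i):i=0,\dots,n\}$, while the bottom power is $\op{min}\{\nu(n-i)-\nu(i):i=0,\dots,n\}$.

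Next I would analyze the left-hand side. Write $B_{n+1}(t)=\sum_{j=0}^{d}c_{j}t^{j}$ with $d=e(n+1)$ and $c_{d}\neq 0$. Substituting $t\mapsto t+\frac{1}{t}$, each term $c_{j}(t+\frac{1}{t})^{j}$ is a Laurent polynomial supported on exponents from $-j$ to $j$, with top term $c_{j}t^{j}$. Since only $j=d$ can reach the exponent $d$, the coefficient of $t^{d}$ in $B_{n+1}(t+\frac{1}{t})$ is precisely $c_{d}\neq 0$, so the top power of $t$ on the left equals $d=e(n+1)$. Comparing with the right-hand side yields $e(n+1)=\op{max}\{\nu(n-i)-\nu(i):i=0,\dots,n\}$, the first equality.

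For the second equality I would exploit a symmetry. The cleanest route is to note that $t+\frac{1}{t}$ is invariant under $t\mapsto\frac{1}{t}$, so $B_{n+1}(t+\frac{1}{t})$ is a symmetric Laurent polynomial; its lowest power is therefore the negative of its highest, giving $-e(n+1)$ as the bottom power on the left. Equivalently, on the right the substitution $i\mapsto n-i$ turns $\nu(n-i)-\nu(i)$ into its negative, so the set of exponents is symmetric about $0$ and $\op{min}=-\op{max}$. Either way, comparing bottom powers (or simply negating the first equality) gives $-e(n+1)=\op{min}\{\nu(n-i)-\nu(i):i=0,\dots,n\}$.

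The only point requiring care — the main, and quite mild, obstacle — is justifying that no cancellation destroys the extreme terms: on the right this is the positivity of the collected coefficients, and on the left it is the observation that the degree-$d$ term of $B_{n+1}$ is the unique source of the exponent $d$ after the substitution. Both are immediate, so the corollary follows at once.
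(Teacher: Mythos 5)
Your proposal is correct and takes essentially the same route as the paper: the paper states this corollary without any written proof, presenting it as an immediate consequence of the identity $B_{n+1}\left(t+\frac{1}{t}\right)=\sum_{i=0}^{n}t^{\nu(n-i)-\nu(i)}$, and your argument (positivity of the collected coefficients on the right, the degree-$e(n+1)$ term being the unique source of the top exponent on the left, and the symmetry under $t\mapsto\frac{1}{t}$, equivalently $i\mapsto n-i$) is exactly the reading-off of extreme exponents that the paper leaves to the reader.
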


\section{Properties of the sequence
$e(n)=\op{deg}_{t}B_{n}(t)$}\label{sec4}

Let $e(n)=\op{deg}_{t}B_{n}(t)$. From the definition of the
$B_{n}(t)$ it is easy to see that the sequence $e(n)$ satisfies the
following relations:
\begin{equation*}
e(1)=0,\; e(2n)=e(n)+1,\;e(2n+1)=\op{max}\{e(n),e(n+1)\}.
\end{equation*}
The sequence thus starts as
\begin{equation*}
0, 1, 1, 2, 1, 2, 2, 3, 2, 2, 2, 3, 2, 3, 3, 4, 3, 3, 2, 3, 2, 3, 3,
4, 3 , 3, 3, 4, 3, 4, 4, 5, \ldots .
\end{equation*}

In \cite[Corollary 13]{Kla} it was shown that the sequence $e(n)$
can be defined alternatively as follows:
\begin{equation*}
e(1)=0,\; e(2n)=e(n)+1,\; e(4n+1)=e(n)+1,\; e(4n+3)=e(n+1)+1.
\end{equation*}
This is a useful definition and we will use it many times in the
following sections.

We start with the problem of counting the number of Stern
polynomials with degree equal to $n$.

\begin{thm}\label{genfundeg}
We have an identity
\begin{equation*}
E(x)=\sum_{n=1}^{\infty}x^{e(n)}=\frac{1}{1-3x}.
\end{equation*}
\end{thm}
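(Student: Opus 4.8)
The plan is to read off a functional equation for $E(x)$ from the alternative recurrence for $e(n)$ recorded just above the statement, namely $e(1)=0$, $e(2n)=e(n)+1$, $e(4n+1)=e(n)+1$, $e(4n+3)=e(n+1)+1$, and then solve it. The point is that these four rules encode a partition of $\N_{+}$: every positive integer is either $1$, or even (of the form $2m$ with $m\geq 1$), or of the form $4m+1$ with $m\geq 1$, or of the form $4m+3$ with $m\geq 0$; and on each of the three nontrivial classes the value of $e$ equals $1$ plus the value of $e$ at a strictly smaller index. That is exactly the structure needed to fold the defining sum back onto itself.

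Before manipulating the series I would first confirm that $E(x)$ is a well-defined formal power series, i.e. that for each fixed $d$ only finitely many $n$ satisfy $e(n)=d$; equivalently $e(n)\to\infty$. Each of the three nontrivial rules writes $e(n)$ as $1$ plus the value of $e$ at an index that is at least $(n-1)/4$, so running the rule backwards from $n$ down to the base value $e(1)=0$ takes at least $\log_{4}\frac{3n+1}{4}$ steps. Hence $e(n)\geq \log_{4}\frac{3n+1}{4}\to\infty$, and every coefficient of $E(x)$ is finite.

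With that in hand I would split the sum according to the partition,
\begin{equation*}
E(x)=x^{e(1)}+\sum_{m\geq 1}x^{e(2m)}+\sum_{m\geq 1}x^{e(4m+1)}+\sum_{m\geq 0}x^{e(4m+3)},
\end{equation*}
and substitute the recurrence. The first term is $1$; the three remaining sums become $x\sum_{m\geq 1}x^{e(m)}$, $x\sum_{m\geq 1}x^{e(m)}$, and $x\sum_{m\geq 0}x^{e(m+1)}$. Since $\{m:m\geq 1\}$ and $\{m+1:m\geq 0\}$ each run over all of $\N_{+}$, every one of the three collapses to $xE(x)$. This gives the functional equation $E(x)=1+3xE(x)$, and solving yields $E(x)=\frac{1}{1-3x}$.

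The only genuine pitfall, and the step I would watch most carefully, is the bookkeeping at the boundaries of the index ranges. The class $4m+1$ must start at $m=1$, because $m=0$ would give $n=1$, which is the separately handled base case and does \emph{not} obey $e(4\cdot 0+1)=e(0)+1$ (indeed $B_{0}(t)=0$ has no degree); whereas $4m+3$ must start at $m=0$. Getting these starting indices exactly right is precisely what makes all three sums equal the same $xE(x)$ and produces the clean factor of $3$; an off-by-one would damage the constant term or a low-order coefficient and destroy the identity. Everything past this point is routine.
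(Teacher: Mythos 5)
Your proof is correct and follows essentially the same route as the paper: both split $\N_{+}$ according to the classes $2m$, $4m+1$, $4m+3$ dictated by the alternative recurrence for $e(n)$, derive the functional equation $E(x)=1+3xE(x)$, and solve it (the paper simply folds the base case $n=1$ into its sum over $4n-3$, where your version isolates it). Your preliminary check that $e(n)\to\infty$, so that $E(x)$ is a well-defined formal power series, is a small rigor bonus the paper omits, but it does not change the argument.
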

\begin{proof}
It is clear that
$E(x)=\sum_{n=1}^{\infty}x^{e(n)}=\sum_{n=1}^{\infty}C_{n}x^{n},$
where $C_{n}=|\{i\in\N_{+}:\;e(i)=n\}|$. In order to prove our
theorem we note that
\begin{align*}
E(x)&=\sum_{n=1}^{\infty}x^{e(n)}=\sum_{n=1}^{\infty}x^{e(2n)}+\sum_{n=1}^{\infty}x^{e(4n-1)}+\sum_{n=1}^{\infty}x^{e(4n-3)}\\
    &=\sum_{n=1}^{\infty}x^{e(n)+1}+\sum_{n=1}^{\infty}x^{e(n)+1}+1+\sum_{n=2}^{\infty}x^{e(n-1)+1}\\
    &=xE(x)+xE(x)+1+xE(x)=3xE(x)+1.
\end{align*}
Solving the above (linear) equation for $E$ we get the expression
displayed in the statement of theorem.
\end{proof}

From the above theorem we deduce the following.
\begin{cor}
Let $C_{n}=|\{i\in\N_{+}:\;e(i)=n\}|$. Then $C_{n}=3^{n}$.
\end{cor}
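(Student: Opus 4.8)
The plan is to read off the corollary directly from Theorem \ref{genfundeg} by extracting Taylor coefficients, since the entire arithmetic content has already been compressed into the closed form $E(x)=1/(1-3x)$; the corollary is essentially a coefficient-comparison statement.

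First I would reinterpret the defining sum. By definition $E(x)=\sum_{n=1}^{\infty}x^{e(n)}$, where the index runs over all positive integers and each contributes a single monomial $x^{e(n)}$. Collecting together all indices $i$ that share a common degree value, one regroups this as $E(x)=\sum_{m=0}^{\infty}C_{m}x^{m}$ with $C_{m}=|\{i\in\N_{+}:\;e(i)=m\}|$, which is exactly the quantity in the statement; here the constant term corresponds to the single index $i=1$ with $e(1)=0$. This is precisely the expansion already introduced at the start of the proof of Theorem \ref{genfundeg}.

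The one point that deserves a word is the legitimacy of this regrouping: it requires each $C_{m}$ to be finite, i.e. that only finitely many indices can have any prescribed degree. This follows from the fact that $e$ tends to infinity. Concretely, iterating $e(2n)=e(n)+1$ together with the remaining recurrences forces a lower bound of the shape $e(i)\geq\tfrac{1}{2}\lfloor\log_{2}i\rfloor$, so that $e(i)\to\infty$ with $i$ and each level set $\{i:\;e(i)=m\}$ is finite (alternatively one may invoke the extremal behaviour of $e$ established in Theorem \ref{extremalofe}). With finiteness in hand, $E(x)$ is a genuine formal power series and the two displayed expansions are literally the same series.

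It then remains only to compare coefficients. Expanding the closed form from Theorem \ref{genfundeg} as a geometric series gives $\tfrac{1}{1-3x}=\sum_{m=0}^{\infty}3^{m}x^{m}$, and matching the coefficient of $x^{n}$ against $E(x)=\sum_{m=0}^{\infty}C_{m}x^{m}$ yields $C_{n}=3^{n}$, as claimed. The only non-routine ingredient is the finiteness remark guaranteeing that $E$ is a well-defined power series; everything else is the standard identification of a rational function with its geometric-series expansion, so I expect no real obstacle here.
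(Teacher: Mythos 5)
Your proof is correct and is essentially the paper's own argument: the regrouping $E(x)=\sum_{m}C_{m}x^{m}$ is exactly what the paper does at the start of the proof of Theorem \ref{genfundeg}, and the corollary then follows by comparing coefficients with the geometric series $\frac{1}{1-3x}=\sum_{m=0}^{\infty}3^{m}x^{m}$. Your extra remark justifying finiteness of each level set (via the lower bound on $e$, cf.\ Corollary \ref{ineqfore}) is a sound piece of care that the paper leaves implicit, but it does not change the route.
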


Further properties of the sequence $\{e(n)\}_{n\in\N_{+}}$ are
contained in the following theorem.

\begin{thm}\label{extremalofe} We have the following equalities:
\begin{equation}\label{eidentity1}
m(n):=\op{min}\{e(i):\;i\in
[2^{n-1},2^{n}]\}=\left\lfloor\frac{n}{2}\right\rfloor,\;n\geq 2,
\end{equation}
\begin{equation}\label{eidentity2}
M(n):=\op{max}\{e(i):\;i\in [2^{n-1},2^{n}]\}=n,
\end{equation}
\begin{equation}\label{eidentity3}
\op{mdeg}(n):=\op{min}\{i:\;e(i)=n\}=2^{n},
\end{equation}
\begin{equation}\label{eidentity4}
\op{Mdeg}(n):=\op{max}\{i:\;e(i)=n\}=\frac{4^{n+1}-1}{3}.
\end{equation}
\end{thm}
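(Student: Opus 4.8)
The plan is to base everything on two pointwise estimates for $e$ and then, in each case, to locate the integer where the extremum occurs. Concretely, I would first prove that for every $i\ge 1$,
\begin{equation*}
\lfloor\lg i\rfloor \le 2e(i)\quad\text{and}\quad e(i)\le\lfloor\lg i\rfloor .
\end{equation*}
Both are established by induction on $i$. For the upper bound I would use the defining recursion $e(2k+1)=\op{max}\{e(k),e(k+1)\}$: from $e(k),e(k+1)\le\lfloor\lg(k+1)\rfloor$ and $\lfloor\lg(k+1)\rfloor\le\lfloor\lg(2k+1)\rfloor$ one gets $e(2k+1)\le\lfloor\lg(2k+1)\rfloor$, while the even case is immediate from $e(2k)=e(k)+1$. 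For the lower bound I would instead use the mod-$4$ form $e(4k+1)=e(k)+1$, $e(4k+3)=e(k+1)+1$, $e(2k)=e(k)+1$, together with the observations $\lfloor\lg(4k+1)\rfloor=\lfloor\lg(4k+3)\rfloor=\lfloor\lg k\rfloor+2$ (for $k\ge 1$) and $\lfloor\lg(k+1)\rfloor\ge\lfloor\lg k\rfloor$; each recursion then raises $2e$ by $2$ while raising $\lfloor\lg\cdot\rfloor$ by at most $2$.

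Granting these, (\ref{eidentity2}) and (\ref{eidentity3}) are immediate. Since $B_{2^{n}}(t)=t^{n}$ gives $e(2^{n})=n$, and $e(i)\le\lfloor\lg i\rfloor\le n$ on $[2^{n-1},2^{n}]$, we obtain $M(n)=n$; and $e(i)=n$ forces $\lfloor\lg i\rfloor\ge n$, i.e. $i\ge 2^{n}$, so $\op{mdeg}(n)=2^{n}$. For (\ref{eidentity1}) the lower bound gives $e(i)\ge\lfloor n/2\rfloor$ for every $i\in[2^{n-1},2^{n}]$, hence $m(n)\ge\lfloor n/2\rfloor$; for the reverse inequality I would exhibit an integer in $[2^{n-1},2^{n}]$ attaining it, namely $\frac{4^{j}-1}{3}$ (which has $2j-1$ binary digits and degree $j-1$) when $n=2j-1$, and its double $\frac{2(4^{j}-1)}{3}$ (which has $2j$ digits and degree $j$) when $n=2j$.

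For (\ref{eidentity4}) the pointwise bounds do not pin down the exact maximum, so here I would argue by a separate strong induction on $n$, proving the sharper implication
\begin{equation*}
e(i)\le n\ \Longrightarrow\ i\le \tfrac{4^{n+1}-1}{3}=:M_{n}.
\end{equation*}
Writing $M_{n}=4M_{n-1}+1$ and using the mod-$4$ recursion, one checks $e(M_{n})=e(M_{n-1})+1=n$, so the value is attained. For the bound, if $e(i)\le n$ then in the three cases $i=2k$, $i=4k+1$, $i=4k+3$ the recursion forces $e(k)\le n-1$, $e(k)\le n-1$, $e(k+1)\le n-1$ respectively; the induction hypothesis then gives $k\le M_{n-1}$, $k\le M_{n-1}$, $k+1\le M_{n-1}$, whence $i\le 2M_{n-1}$, $i\le 4M_{n-1}+1=M_{n}$, $i\le 4M_{n-1}-1<M_{n}$. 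Thus the largest index with $e=n$ is exactly $M_{n}$.

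The routine parts are the two inductions for the pointwise bounds and the case analysis in (\ref{eidentity4}); the genuinely delicate point is the lower bound $2e(i)\ge\lfloor\lg i\rfloor$. A naive one-step induction on the intervals $[2^{n-1},2^{n}]$ fails for odd $n$: for an odd index the estimate $e(2k+1)=\op{max}\{e(k),e(k+1)\}\ge m(n)=\lfloor n/2\rfloor$ is too weak by one, since one needs $\lfloor(n+1)/2\rfloor$. Passing to the mod-$4$ recursion, where every step advances two binary digits at once, is what makes the factor $\tfrac12$ come out sharp and is the crux of the argument.
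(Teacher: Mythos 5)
Your proof is correct, and it differs from the paper's in organization though not in its engine. The delicate points all check out: $\lfloor\lg(k+1)\rfloor\le\lfloor\lg(2k+1)\rfloor$ holds because an odd number greater than $1$ is never a power of two; $\lfloor\lg(4k+1)\rfloor=\lfloor\lg(4k+3)\rfloor=\lfloor\lg k\rfloor+2$ holds for $k\ge 1$, with $i\le 4$ handled as base cases; your witnesses $\frac{4^j-1}{3}$ and $\frac{2(4^j-1)}{3}$ lie in the correct dyadic blocks and have degrees $j-1$ and $j$ respectively; and the cases $i=2k$, $i=4k+1$, $i=4k+3$ in your argument for (\ref{eidentity4}) exhaust all $i\ge 2$. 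The paper relies on exactly the crux you single out --- it too passes to the recursion $e(2n)=e(n)+1$, $e(4n+1)=e(n)+1$, $e(4n+3)=e(n+1)+1$, so that each step advances two binary digits per unit of degree --- but it packages the estimates as block statements proved by double induction on $n$ and the position inside $[2^{n-1},2^n]$: one induction for the lower bound in (\ref{eidentity1}), a second for the upper bound $e(2^{n-1}+i)\le n$ in (\ref{eidentity2}), and then, because that bound is not strict, a third separate induction showing $e(2^n-i)<n$ to obtain (\ref{eidentity3}). Your pointwise inequality $e(i)\le\lfloor\lg i\rfloor$ is sharper than the paper's bound in (\ref{eidentity2}) and yields (\ref{eidentity2}) and (\ref{eidentity3}) in one stroke; that is the genuine economy of your version. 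For (\ref{eidentity4}) the paper proves precisely the contrapositive of your implication, namely $e(u_n+i)>n$ for all $i\in\N_+$ with $u_n=\frac{4^{n+1}-1}{3}$, splitting on the offset $i$ modulo $4$ rather than on the index itself; the two inductions are equivalent, yours being slightly tidier to state. Finally, for sharpness in (\ref{eidentity1}) the paper uses the witnesses $a_{2k}=\frac{1}{3}(2^{2k+1}+1)$ and $a_{2k+1}=2^{2k-1}+a_{2k}$, whereas yours belong to the same family and have the pleasant feature of being exactly $\op{Mdeg}(j-1)$ and $2\op{Mdeg}(j-1)$, tying the first identity to the fourth.
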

\begin{proof}
In order to prove (\ref{eidentity1}) first we show that $m(n)\geq
\left\lfloor\frac{n}{2}\right\rfloor$. It is clear that it is enough
to show that $e(2^{n-1}+i)\geq \left\lfloor\frac{n}{2}\right\rfloor$
for $i\in[0,2^{n-1}]$. We proceed by induction on $n$ and
$i\in[0,2^{n-1}]$. This inequality is true for $n=2$ and $i=0,1,2$.
Let us suppose that it is true for $n$ and $i\in[0,2^{n-1}]$. We
show that it is true for for $n+1$ and $i\in[0,2^{n}]$. If $i$ is
even then $i=2j$ for some $j\in[0,2^{n-1}]$ and we have
\begin{equation*}
e(2^{n}+i)=e(2^{n}+2j)=e(2^{n-1}+j)+1\geq
\left\lfloor\frac{n}{2}\right\rfloor+1\geq
\left\lfloor\frac{n+1}{2}\right\rfloor.
\end{equation*}
If $i$ is odd then $i=4j+1$ or $i=4j+3$. In the case of $i=4j+1$ we
have $j\in[0,2^{n-2}-1]$ and
\begin{equation*}
e(2^{n}+i)=e(2^{n}+4j+1)=e(2^{n-2}+j)+1\geq
\left\lfloor\frac{n-1}{2}\right\rfloor+1=
\left\lfloor\frac{n+1}{2}\right\rfloor,
\end{equation*}
In the case of $i=4j+3$ we have $j\in[0,2^{n-2}-3]$ and
\begin{equation*}
e(2^{n}+i)=e(2^{n}+4j+3)=e(2^{n-2}+j+1)+1\geq
\left\lfloor\frac{n-1}{2}\right\rfloor+1=
\left\lfloor\frac{n+1}{2}\right\rfloor.
\end{equation*}
This finishes proof of the inequality $m(n)\geq
\left\lfloor\frac{n}{2}\right\rfloor.$ In order to show that for
each $n\geq 2$ this inequality is strict it is enough to give an
integer $a_{n}$ such that $a_{n}\in[2^{n-1},2^{n}]$ and
$e(a_{n})=\left\lfloor\frac{n}{2}\right\rfloor.$ Let us define
\begin{equation*}
a_{2k}=\frac{1}{3}(2^{2k+1}+1), \quad
a_{2k+1}=2^{2k-1}+\frac{1}{3}(2^{2k+1}+1).
\end{equation*}
First of all let us note that
\begin{equation*}
2^{2k-1}<a_{2k}<2^{2k},\quad 2^{2k}<a_{2k+1}<2^{2k+1}
\end{equation*}
for each $k\geq 1$. It is easy to check that
\begin{equation}\label{akrel}
a_{2(k+1)}=4a_{2k}-1,\quad a_{2(k+1)+1}=4a_{2k+1}-1.
\end{equation}
In order to finish the proof we check that
$e(a_{2k})=e(a_{2k+1})=k.$ We proceed by induction on $k$. For $k=1$
we have $a_{2}=3, a_{3}=5$ and clearly $e(3)=e(5)=1$. Let us suppose
that for $k$ we have $e(a_{2k})=e(a_{2k+1})=k.$ We will prove the
equality $e(a_{2(k+1)})=e(a_{2(k+1)+1})=k+1.$ Using now the first
relation from (\ref{akrel}) we get
\begin{equation*}
e(a_{2(k+1)})=e(4(a_{2k}-1)+3)=e(2(a_{2k}-1)+2)=e(a_{2k})+1=k+1.
\end{equation*}
Using now the second relation from (\ref{akrel}) we get
\begin{equation*}
e(a_{2(k+1)+1})=e(4(a_{2k+1}-1)+3)=e(2(a_{2k+1}-1)+2)=e(a_{2k+1})+1=k+1
\end{equation*}
and the result follows.

 Similarly as in the case of computation of $m(n)$ we see that in
order to prove the formula for $M(n)$ it is enough to show that
there exists an integer $k\in[2^{n-1},2^{n}]$ such that $e(k)=n$ and
that $e(2^{n-1}+i)\leq n$ for $i\in[0,2^{n-1}]$. We have that
$e(2^n)=n$, so we are left with showing the inequality
$e(2^{n-1}+i)\leq n$ for $i\in[0,2^{n-1}]$. We proceed by induction
on $n$ and $i\in[0,2^{n-1}]$. This inequality is true for $n=1$ and
$i=0,1$. Let us suppose that it is true for $n$ and
$i\in[0,2^{n-1}]$. We show that it is true for $n+1$ and
$i\in[0,2^{n}]$. If $i$ is even then $i=2m$ for some
$m\in[0,2^{n-1}]$ and we have
\begin{equation*}
e(2^{n+1}+i)=e(2^{n+1}+2m)=e(2^{n}+m)+1\leq n+1.
\end{equation*}
If $i$ is odd then $i=2m+1$ for  some $m\in[0,2^{n-1}]$ (note that
$e(2^{n}+1)=n$ which is a consequence of Corollary
\ref{+-expression}). Now we have
\begin{equation*}
e(2^{n+1}+i)=e(2^{n+1}+2m+1)=\op{max}\{e(2^{n}+m),
e(2^{n}+m+1)\}\leq n+1,
\end{equation*}
and the equality $M(n)=n$ follows.

In order to prove (\ref{eidentity3}) we show that $e(2^{n})=n$ and
$e(2^{n}-i)<n$ for $i=1,2,\ldots,2^{n}$. The equality follows form
the identity $B_{2^{n}}(t)=t^{n}$. In order to prove the inequality
$e(2^{n}-i)<n$ for $i=1,2,\ldots,2^{n}$ we will proceed by double
induction with respect to $n$ and $1\leq i<2^{n}$. The inequality
holds for $n=1,2$ and let us assume that our theorem holds for $n$
and $1\leq i<2^{n}$. We consider two cases: $i$ is even and $i$ is
odd. If $i$ is even then $i=2k$ and we get
\begin{equation*}
e(2^{n+1}-i)=e(2^{n+1}-2k)=e(2^{n}-k)+1\leq n+1.
\end{equation*}
The last inequality follows from the induction hypothesis. For $i$
odd we have $i=2k-1$ and then
\begin{equation*}
e(2^{n+1}-i)=e(2^{n+1}-2k+1)=\op{max}\{e(2^{n}-k),e(2^{n}-k+1)\}\leq
n<n+1.
\end{equation*}
Thus we have that $e(2^{n+1}-i)\leq n+1$ and the result follows.

\bigskip

Finally, in order to prove (\ref{eidentity4}) we define
$u_{n}=\frac{4^{n+1}-1}{3}$. In order to show that
$\op{Mdeg}(n)=u_{n}$ it is enough to show that $e(u_{n})=n$ and
$e(u_{n}+i)>n$ for $i\in\N_{+}$.

First of all we note that $u_{0}=1$ and $u_{n+1}=4u_{n}+1$ for
$n\geq 1$. We prove that $e(u_{n})=n$ and $e(u_{n}+1)=n+1$. In order
to do this we use induction on $n$. These equalities are true for
$n=0,1$, and let us assume that are true for $n$. Then we have
\begin{equation*}
e(u_{n+1})=e(4u_{n}+1)=e(u_{n})+1=n+1
\end{equation*}
and
\begin{align*}
e(u_{n+1}+1)&=e(4u_{n}+2)=e(2u_{n}+1)+1=\op{max}\{e(u_{n}),e(u_{n}+1)\}+1\\
          &=\op{max}\{n,n+1\}+1=n+2
\end{align*}
and the result follows.

Now we prove that $e(u_{n}+i)>n$ for given $n$ and $i\in\N_{+}$.
This inequality is true for $n=0$ and $i\in\N_{+}$. If $i=4k$ then
we have
\begin{equation*}
e(u_{n+1}+i)=e(4u_{n}+4k+1)=e(u_{n}+k)+1>n+1.
\end{equation*}
If $i=4k+1$ then we have
\begin{align*}
e(u_{n+1}+i)&=e(4u_{n}+4k+2)=e(2u_{n}+2k+1)+1=\\
            &=\op{max}\{e(u_{n}+k),e(u_{n}+k+1)\}+1>\op{max}\{n,n\}+1=n+1.
\end{align*}
For $i=4k+2$ we get
\begin{equation*}
e(u_{n+1}+i)=e(4u_{n}+4k+3)=e(u_{n}+k+1)+1>n+1,
\end{equation*}
and finally for $i=4k+3$ we get
\begin{equation*}
e(u_{n+1}+i)=e(4u_{n}+4k+4)=e(2u_{n}+2k+2)+1=e(u_{n}+k+1)+2>n+2
\end{equation*}
and our theorem follows.
\end{proof}

\begin{cor}\label{ineqfore}
We have $\lfloor \frac{\lg n}{2} \rfloor \leq e(n)\leq \lg n$.
\end{cor}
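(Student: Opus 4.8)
The plan is to read both inequalities off the two extremal identities (\ref{eidentity3}) and (\ref{eidentity4}) of Theorem \ref{extremalofe}, which pin down exactly the smallest and largest indices at which a given degree is attained. Fix $n\in\N_{+}$ and set $d=e(n)$. The idea is that $n$ is then an index whose Stern polynomial has degree $d$, so $n$ must lie between $\op{mdeg}(d)$ and $\op{Mdeg}(d)$; translating these two membership bounds into statements about $\lg n$ yields precisely the claimed sandwich. Note that the coarser bounds coming from $m(n)$ and $M(n)$ are not sharp enough here (since for $n$ strictly between two powers of $2$ one only gets $e(n)\le\lceil\lg n\rceil$, which exceeds $\lg n$), so it is essential to use the exact minimal/maximal index identities rather than the interval extrema.

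For the upper bound I would argue as follows. Since $d=e(n)$, the integer $n$ belongs to the set $\{i:e(i)=d\}$, whose minimum is $\op{mdeg}(d)=2^{d}$ by (\ref{eidentity3}). Hence $n\ge 2^{d}$, and taking logarithms base $2$ gives $\lg n\ge d=e(n)$, which is the desired inequality $e(n)\le\lg n$.

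For the lower bound I would use (\ref{eidentity4}) in the same spirit: since $n\in\{i:e(i)=d\}$ and the maximum of this set is $\op{Mdeg}(d)=\frac{4^{d+1}-1}{3}$, we obtain $n\le\frac{4^{d+1}-1}{3}$, whence $3n\le 4^{d+1}-1<4^{d+1}=2^{2d+2}$. In particular $\lg n<2d+2$, so that $d>\frac{\lg n}{2}-1$. Because $\frac{\lg n}{2}\ge\lfloor\frac{\lg n}{2}\rfloor$ we get $d>\lfloor\frac{\lg n}{2}\rfloor-1$, and since both $d$ and $\lfloor\frac{\lg n}{2}\rfloor$ are integers this forces $d\ge\lfloor\frac{\lg n}{2}\rfloor$, i.e. $e(n)\ge\lfloor\frac{\lg n}{2}\rfloor$.

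The two extremal identities carry essentially all the content, so there is no real obstacle in this corollary; the only point requiring care is the final step, where a strict inequality between real quantities ($d>\frac{\lg n}{2}-1$) must be upgraded to the floor inequality using integrality of $d$. I would therefore spell out that last floor manipulation explicitly and leave the rest as the direct substitutions described above.
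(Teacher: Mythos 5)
Your proof is correct and takes essentially the same route as the paper: the paper's proof is just the one-line remark that the corollary follows from Theorem \ref{extremalofe}, and your derivation via identities (\ref{eidentity3}) and (\ref{eidentity4}) (with the careful integrality step $d>\lfloor\frac{\lg n}{2}\rfloor-1\Rightarrow d\ge\lfloor\frac{\lg n}{2}\rfloor$) is a valid way of filling in those details. One minor inaccuracy in your preamble, which does not affect the proof: the interval-minimum identity (\ref{eidentity1}) \emph{is} in fact sharp enough for the lower bound, since taking $k=\lceil\lg n\rceil$ gives $e(n)\ge\lfloor k/2\rfloor\ge\lfloor\frac{\lg n}{2}\rfloor$; it is only the upper bound where the interval-maximum identity (\ref{eidentity2}) alone would be insufficient and (\ref{eidentity3}) is genuinely needed.
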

\begin{proof}
This is a simple consequence of the identities obtained in Theorem
\ref{extremalofe}.
\end{proof}










Another interesting property of the sequence
$\{e(n)\}_{n=1}^{\infty}$ is contained in the following.

\begin{cor}
Let $k\in \N$ be given. Then
\begin{equation*}
\lim_{n\rightarrow+\infty}\frac{e(n)}{e(n+k)}=1.
\end{equation*}
\end{cor}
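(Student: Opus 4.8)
The plan is to exploit two facts about the sequence $\{e(n)\}$: it tends to infinity, and it changes by at most one between consecutive indices. The first is immediate from Corollary \ref{ineqfore}, which gives $e(n)\geq\lfloor\frac{\lg n}{2}\rfloor\to\infty$; note that the two-sided bound there alone is \emph{not} enough, since it only pins the ratio between $\frac12$ and $2$. What saves the argument is a Lipschitz-type estimate, so I would first isolate the statement that $|e(n+1)-e(n)|\leq 1$ for every $n\in\N_{+}$, and treat its proof as the heart of the matter.

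To prove $|e(n+1)-e(n)|\leq 1$ I would use strong induction on $n$ together with the recurrences $e(2m)=e(m)+1$ and $e(2m+1)=\max\{e(m),e(m+1)\}$. The base case $e(2)-e(1)=1$ is clear. For the inductive step one splits according to the parity of $n$. If $n=2m$ with $m\geq 1$, then
\begin{equation*}
e(n+1)-e(n)=\max\{e(m),e(m+1)\}-(e(m)+1)=\max\{0,\,e(m+1)-e(m)\}-1,
\end{equation*}
which lies in $\{-1,0\}$ by the inductive hypothesis applied to the smaller index $m$. If $n=2m+1$ with $m\geq 1$, then
\begin{equation*}
e(n+1)-e(n)=\bigl(e(m+1)+1\bigr)-\max\{e(m),e(m+1)\}=1-\max\{e(m)-e(m+1),\,0\},
\end{equation*}
which lies in $\{0,1\}$ for the same reason. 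Since each index $n\geq 2$ reduces to the strictly smaller index $m$, the induction closes and the bound holds for all $n$.

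Granting this, the rest is a short squeeze. By the triangle inequality $|e(n+k)-e(n)|\leq k$ for the fixed $k$, so writing $\delta_n=e(n+k)-e(n)$ we have $|\delta_n|\leq k$ and
\begin{equation*}
\frac{e(n)}{e(n+k)}=\frac{e(n)}{e(n)+\delta_n}=\frac{1}{1+\delta_n/e(n)}.
\end{equation*}
Because $e(n)\to\infty$ while $|\delta_n|\leq k$ stays bounded, the quotient $\delta_n/e(n)$ tends to $0$, and hence the whole expression tends to $1$, which is the claim. I expect the only real obstacle to be the Lipschitz bound $|e(n+1)-e(n)|\leq 1$; the limit computation is then routine, and one could even phrase it slightly more loosely (any bound $|e(n+k)-e(n)|\leq C(k)$ would suffice), but the clean increment-by-one estimate is the most transparent route.
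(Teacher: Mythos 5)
Your proof is correct and follows essentially the same route as the paper: the Lipschitz bound $|e(n+1)-e(n)|\leq 1$, the triangle inequality giving $|e(n+k)-e(n)|\leq k$, and a squeeze using $e(n)\to\infty$. The only difference is that the paper simply asserts the increment bound as an easy consequence of the recurrences, whereas you supply the (correct) strong-induction argument for it.
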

\begin{proof}
In order to prove the demanded equality let us note that from the
definition of the sequence $e(n)$ we easily deduce that for each
$n\in\N$ we have $|e(n+1)-e(n)|\leq 1$. Thus, for given positive $k$
we have by triangle inequality
\begin{equation*}
|e(n)-e(n+k)|\leq \sum_{i=1}^{k}|e(n+i-1)-e(n+i)|\leq k.
\end{equation*}
 Form this inequality we get that
 \begin{equation*}
\left| \frac{e(n)}{e(n+k)}-1\right| \leq \frac{k}{e(n+k)}.
 \end{equation*}
Because the fraction $k/e(n+k)$ tends to zero with
$n\rightarrow\infty$ the result follows.
\end{proof}

\begin{thm}\label{equalvaluese}
\begin{enumerate}

\item The set $\cal{E}:=\{n\in\N:\;e(n)=e(n+1)\}$ is infinite and has the following property: If $m\in\cal{E}$ then
$n=4m+1$ satisfy $e(n)=e(n+1)=e(n+2)$. On the other hand side: if
$e(n)=e(n+1)=e(n+2)$ for some $n$ then there exists an integer $m$
such that $n=4m+1$ and $m\in\cal{E}.$

\item There doesn't exist an integer $n$ such that
\begin{equation*}
e(n)=e(n+1)=e(n+2)=e(n+3)
\end{equation*}
\end{enumerate}
\end{thm}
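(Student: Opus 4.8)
The plan is to reduce the whole statement to the two sets of recurrences for $e(n)$ recorded at the beginning of this section, above all to the alternative form $e(2n)=e(n)+1$, $e(4n+1)=e(n)+1$, $e(4n+3)=e(n+1)+1$ of \cite{Kla}, which expresses $e(n)$ through values at strictly smaller indices according to the residue of $n$ modulo $4$. Every assertion then becomes a short computation in these residues. To dispose of infinitude of $\cal{E}$ I would exhibit an explicit family: for $k\geq 2$ put $n=2^{k}-2$; then $e(2^{k}-1)=k-1$ by Corollary \ref{+-expression}, while $e(2^{k}-2)=e(2^{k-1}-1)+1=k-1$, so that $2^{k}-2\in\cal{E}$ for all $k\geq 2$ and $\cal{E}$ is infinite.

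For the forward implication in (1), fix $m\in\cal{E}$, so $e(m)=e(m+1)$, and set $n=4m+1$. The recurrences give $e(n)=e(4m+1)=e(m)+1$, $e(n+1)=e(4m+2)=e(2m+1)+1=\max\{e(m),e(m+1)\}+1$, and $e(n+2)=e(4m+3)=e(m+1)+1$. Since $e(m)=e(m+1)$, all three values collapse to $e(m)+1$, which is exactly $e(n)=e(n+1)=e(n+2)$.

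For the converse I would argue by cases on $n\bmod 4$, exploiting that $e(4k)=e(2k)+1=e(k)+2$ forces a jump of size $2$ at multiples of $4$. If $n\equiv 0$, writing $n=4m$ gives $e(n)=e(m)+2$ but $e(n+1)=e(m)+1$; if $n\equiv 2$, writing $n=4m+2$ gives $e(n+1)=e(m+1)+1$ but $e(n+2)=e\big(4(m+1)\big)=e(m+1)+2$; if $n\equiv 3$, writing $n=4m+3$ gives $e(n)=e(m+1)+1$ but $e(n+1)=e\big(4(m+1)\big)=e(m+1)+2$. In each of these three classes two of the three consecutive values already differ by $1$, contradicting $e(n)=e(n+1)=e(n+2)$. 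Hence necessarily $n\equiv 1\pmod 4$, say $n=4m+1$, and the computation of the forward direction read backwards forces $e(m)=e(m+1)$, i.e. $m\in\cal{E}$.

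Finally, for (2), suppose $e(n)=e(n+1)=e(n+2)=e(n+3)$. Applying the converse of (1) to $n,n+1,n+2$ yields $n\equiv 1\pmod 4$, while applying it to $n+1,n+2,n+3$ yields $n+1\equiv 1\pmod 4$, that is $n\equiv 0\pmod 4$; these are incompatible, so no such $n$ exists. (Concretely, $n=4m+1$ with $m\in\cal{E}$ gives $e(n+3)=e\big(4(m+1)\big)=e(m+1)+2$, whereas $e(n)=e(m)+1=e(m+1)+1$.) The only point demanding care — and the main obstacle — is the bookkeeping in the converse: one must track how each index $n+j$ for $j=0,1,2$ crosses the multiples of $4$, and confirm the small-index boundary cases so that the recurrences $e(4k+1)=e(k)+1$ and $e(2k+1)=\max\{e(k),e(k+1)\}$ are invoked only for $k\geq 1$, where they are valid (the anchor value $e(1)=0$ being treated directly).
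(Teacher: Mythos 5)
Your proof of part (1) is correct and essentially identical to the paper's: the same witness family $2^{k}-2$ for infinitude, the same forward computation $e(4m+1)=e(4m+2)=e(4m+3)=e(m)+1$, and the same elimination of the residues $0,2,3$ modulo $4$ followed by reading the three recurrences backwards to force $e(m)=\max\{e(m),e(m+1)\}=e(m+1)$. Where you genuinely diverge is part (2). The paper splits into $n$ even (ruled out by part (1)) and $n=2k+1$ odd, and for the odd case it derives the chain $\max\{e(k),e(k+1)\}=e(k+1)+1=\max\{e(k+1),e(k+2)\}=e(k+2)+1$, extracting the incompatible relations $e(k+2)=e(k+1)+1$ and $e(k+1)=e(k+2)+1$. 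You instead apply the converse of part (1) twice, to the overlapping triples $(n,n+1,n+2)$ and $(n+1,n+2,n+3)$, obtaining $n\equiv 1\pmod 4$ and $n+1\equiv 1\pmod 4$ simultaneously, which is absurd; your parenthetical direct check via $e(n+3)=e(4(m+1))=e(m+1)+2$ versus $e(n)=e(m+1)+1$ confirms the same thing concretely. Your route is shorter and reuses part (1) at full strength, avoiding any further manipulation of the $\max$ recurrence; the paper's route is self-contained for odd $n$ and incidentally exhibits exactly how the four consecutive values fail to equalize. Both are sound. One point to keep explicit in a final write-up (you flag it yourself): the converse argument invokes $e(4m+1)=e(m)+1$, which needs $m\geq 1$; this is harmless because $n=1$ cannot satisfy $e(n)=e(n+1)$ (as $e(1)=0$, $e(2)=1$), so $m=0$ never occurs, but the sentence should be said.
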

\begin{proof}
The fact that the set $\cal{E}$ is infinite is easy. For example if
$n=2^{k}-2$ then from Corollary \ref{+-expression} we get that
$e(n)=e(2^{k-1}-1)+1=k-1$ and $e(n+1)=e(2^{k}-1)=k-1$. But it should
be noted that the set $\cal{E}$ contains infinite arithmetic
progressions. More precisely we have
\begin{equation*}
e(8m+1)=e(8m+2)\quad\mbox{and}\quad e(8m+6)=e(8m+7).
\end{equation*}
This property is a consequence of the identities
\begin{align*}
e(8m+1)&=e(4m)=e(m)+2,\\
e(8m+2)&=e(4m+1)+1=e(2m)+1=e(m)+2
\end{align*}
and
\begin{align*}
e(8m+6)&=e(4m+3)+1=e(2m+2)+1=e(m+1)+2,\\
e(8m+7)&=e(4(2m+1)+3)=e(2(2m+1)+2)=e(m+1)+2.
\end{align*}

Now if $m\in\cal{E}$ then $e(m)=e(m+1)$ and we have
\begin{align*}
&e(4m+1)=e(m)+1,\\
&e(4m+2)=e(2m+1)+1=\op{max}\{e(m),e(m+1)\}+1=e(m)+1,\\
&e(4m+3)=e(m+1)+1=e(m)+1,
\end{align*}
and we are done.

Suppose now that $e(n)=e(n+1)=e(n+2)$. We show that $n=4m+1$.

If $n=4m$ then $e(n)=e(4m)=e(m)+2$ and $e(n+1)=e(4m+1)=e(m)+1$ and
we get a contradiction.

If $n=4m+2$ then $e(n+1)=e(4m+3)=e(m+1)+1$ and
$e(n+2)=e(4m+4)=e(m+1)+2$ and we get a contradiction.

Finally, if $n=4m+3$ then $e(n)=e(4m+3)=e(m+1)+1$ and
$e(n+1)=e(4m+4)=e(m+1)+2$ and once again we get a contradiction.

We have shown that if $e(n)=e(n+1)=e(n+2)$ then $n=4m+1$ for some
$m\in\N$. If now $e(n)=e(n+1)=e(n+2)$ for $n=4m+1$ then
\begin{align*}
e(n)&=e(4m+1)=e(m)+1,\\
e(n+1)&=e(4m+2)=e(2m+1)+1=\op{max}\{e(m),e(m+1)\}+1,\\
e(n+2)&=e(4m+3)=e(m+1)+1.
\end{align*}
Thus we get that $e(m)=\op{max}\{e(m),e(m+1)\}=e(m+1)$ and the first
part of our theorem is proved.
\bigskip

In order to prove (2) we consider two cases: $n$ even and $n$ odd.
The case $n$ even is immediately ruled out by the result from (1).
If $n=2k+1$ and the equality $e(n)=e(n+1)=e(n+2)=e(n+3)$ holds then
\begin{equation*}
\op{max}\{e(k),e(k+1)\}=e(k+1)+1=\op{max}\{e(k+1),e(k+2)\}=e(k+2)+1.
\end{equation*}
From the second equality we deduce that $e(k+1)+1=e(k+2)$ and form
the third equality we get $e(k+1)=e(k+2)+1$ and we arrive at a
contradiction.
\end{proof}

In Corollary \ref{sumrows} we obtain a closed form of the sum
$\sum_{i=1}^{2^{n}}B_{i}(t)$. It is an interesting question if an
analogous sum for the sequence $\{e(n)\}_{n=1}^{\infty}$ can be
obtained. As we will see the answer to this question is positive. In
fact, we have the following result.

\begin{cor}
For $n\geq 1$ we have
\begin{equation}\label{sume}
\sum_{i=1}^{2^{n}}e(i)=\frac{1}{36}((6n-7)2^{n+2}+18n+27+(-1)^{n}).
\end{equation}
\end{cor}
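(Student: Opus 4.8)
The plan is to reduce the statement to a second-order linear recurrence for the partial sums $S(n):=\sum_{i=1}^{2^{n}}e(i)$ and then to verify the proposed closed form against this recurrence. Setting $S(0):=e(1)=0$ for convenience (the formula on the right of \eqref{sume} also evaluates to $0$ at $n=0$), I would first derive a recurrence expressing $S(n+1)$ through $S(n)$ and $S(n-1)$. To do this I partition the index set $\{1,\dots,2^{n+1}\}$ into the even integers $2m$ with $m\in[1,2^{n}]$ and the two odd residue classes $4k+1$ and $4k+3$ with $k\in[0,2^{n-1}-1]$, and apply the alternative recurrence $e(2m)=e(m)+1$, $e(4k+1)=e(k)+1$, $e(4k+3)=e(k+1)+1$ from \cite{Kla} recorded at the start of this section.

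Summing over each class, the even integers contribute $\sum_{m=1}^{2^{n}}(e(m)+1)=S(n)+2^{n}$. For the class $4k+1$ one must treat the boundary index $k=0$ separately, since there $e(4\cdot0+1)=e(1)=0$ is used in place of $e(0)+1$; using then $\sum_{k=1}^{2^{n-1}-1}e(k)=S(n-1)-e(2^{n-1})=S(n-1)-(n-1)$, this class contributes $S(n-1)-n+2^{n-1}$. The class $4k+3$ contributes $\sum_{k=0}^{2^{n-1}-1}(e(k+1)+1)=S(n-1)+2^{n-1}$. Adding the three contributions yields
\begin{equation*}
S(n+1)=S(n)+2S(n-1)+2^{n+1}-n,\qquad n\geq 1.
\end{equation*}

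To finish, I would verify that $f(n):=\frac{1}{36}((6n-7)2^{n+2}+18n+27+(-1)^{n})$ satisfies this recurrence together with the initial values $S(0)=0$ and $S(1)=1$; since a second-order linear recurrence with two prescribed initial terms has a unique solution, this identifies $S(n)=f(n)$ for all $n$, and in particular for $n\geq1$ as claimed. Substituting $f$ into $f(n+1)-f(n)-2f(n-1)$ should collapse to $2^{n+1}-n$: the pure $2^{n}$-terms combine precisely because $2$ is a root of the characteristic polynomial $x^{2}-x-2=(x-2)(x+1)$ of the homogeneous part, while the $(-1)^{n}$-terms, the terms linear in $n$, and the constants balance against the forcing term. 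Alternatively one may solve the recurrence from scratch: the homogeneous solution is $A2^{n}+B(-1)^{n}$, and because $2$ resonates with the forcing term $2^{n+1}$ one takes a particular solution of the shape $Cn2^{n}+Dn+E$, determining the constants from the initial data.

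The only delicate point is the bookkeeping in the derivation of the recurrence, namely the off-by-one effect at $k=0$ (using $e(1)=0$ rather than the inapplicable $e(0)+1$) and the subtraction of the top term $e(2^{n-1})=n-1$ when rewriting $\sum_{k=1}^{2^{n-1}-1}e(k)$ in terms of $S(n-1)$. Once the recurrence is correctly in hand, the remainder is a routine algebraic identity together with the uniqueness of solutions of a linear recurrence.
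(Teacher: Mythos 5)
Your proposal is correct and follows essentially the same route as the paper: the same partition of $\{1,\dots,2^{n+1}\}$ into the even indices and the residue classes $4k+1$, $4k+3$ (with the same boundary bookkeeping at $k=0$ and the subtraction of $e(2^{n-1})$), yielding the same recurrence $S(n+1)=S(n)+2S(n-1)+2^{n+1}-n$ up to an index shift. The paper then argues by two-step induction, verifying that the closed form satisfies this recurrence, which is exactly your uniqueness-of-solutions argument in different words.
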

\begin{proof}
Let us define $S(n)=\sum_{i=1}^{2^{n}}e(i)$ and let $T(n)$ denote
the right hand side of the identity (\ref{sume}). In order to get a
closed form of the sum $S(n)$ we proceed by induction on $n$. Note
that (\ref{sume}) holds for $n=1,2$. Let us assume that (\ref{sume})
holds for $n$ and $n+1$. We prove that the identity holds for $n+2$.
In order to do this we compute
\begin{align*}
S(n+2)&=\sum_{i=1}^{2^{n+1}}e(2i)+\sum_{i=1}^{2^{n}-1}e(4i+1)+\sum_{i=1}^{2^{n}-1}e(4i+3)\\
      &=\sum_{i=1}^{2^{n+1}}(e(i)+1)+\sum_{i=1}^{2^{n}-1}(e(i)+1)+\sum_{i=0}^{2^{n}-1}(e(i+1)+1)\\
      &=S(n+1)+2^{n+1}+S(n)+2^{n}-1+S(n)-e(2^{n})+2^{n}\\
      &=S(n+1)+2S(n)+2^{n+2}-n-1\\
      &=T(n+1)+2T(n)+2^{n+2}-n-1,
\end{align*}
where the last equality follows from the induction hypothesis. A
simple calculation shows that $T(n+2)=T(n+1)+2T(n)+2^{n+2}-n-1$ and
the result follows.
\end{proof}

\begin{cor}
For $n\geq 1$ we have
\begin{equation*}
\sum_{i=1}^{2^{n}}(-1)^{i}e(i)=\frac{1}{12}(2^{n+2}+6n-3+(-1)^{n+1}).
\end{equation*}
\end{cor}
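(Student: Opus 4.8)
The plan is to reduce this alternating sum to the ordinary sum $S(n):=\sum_{i=1}^{2^{n}}e(i)$, whose closed form was just established in the preceding corollary. Writing $A(n):=\sum_{i=1}^{2^{n}}(-1)^{i}e(i)$, I would split the range according to the parity of $i$, so that $A(n)=E(n)-O(n)$, where $E(n)$ collects the even-indexed terms and $O(n)$ the odd-indexed ones.

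The key observation is that the even part reduces cleanly while the odd part does not. Using the relation $e(2j)=e(j)+1$ I obtain
\[
E(n)=\sum_{j=1}^{2^{n-1}}e(2j)=\sum_{j=1}^{2^{n-1}}(e(j)+1)=S(n-1)+2^{n-1}.
\]
Rather than summing the odd-indexed terms directly (which would drag in the less convenient relations $e(4k+1)=e(k)+1$ and $e(4k+3)=e(k+1)+1$, mixing $e(k)$ and $e(k+1)$), I would simply write $O(n)=S(n)-E(n)$. This yields the clean reduction
\[
A(n)=2E(n)-S(n)=2S(n-1)-S(n)+2^{n}.
\]

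At this point the problem is purely algebraic: substitute
\[
S(n)=\frac{1}{36}((6n-7)2^{n+2}+18n+27+(-1)^{n})
\]
from the previous corollary, together with the same expression for $S(n-1)$, into $A(n)=2S(n-1)-S(n)+2^{n}$. The $2^{n+2}$-terms combine to $-6\cdot 2^{n+2}$, which the extra $+2^{n}$ converts into $12\cdot 2^{n}=3\cdot 2^{n+2}$; the linear parts collapse to $18n-9$; and the sign contributions combine as $2(-1)^{n-1}-(-1)^{n}=3(-1)^{n+1}$. Factoring $3$ out of $36$ then gives exactly $\frac{1}{12}(2^{n+2}+6n-3+(-1)^{n+1})$. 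One checks that the closed form for $S$ stays valid even at $n=0$ (it gives $S(0)=0$), so the reduction is legitimate for all $n\geq 1$ with no separate base case required.

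The only genuine obstacle is bookkeeping, in particular tracking the $(-1)^{n}$ terms through the subtraction $2S(n-1)-S(n)$; here a sign slip is the likeliest error. I would guard against this by verifying the final formula against the small cases $n=1,2,3$, where direct computation from the initial segment $0,1,1,2,1,2,2,3,\dots$ of the sequence $e$ gives $A(1)=1$, $A(2)=2$, and $A(3)=4$, each matching the claimed closed form.
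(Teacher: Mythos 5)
Your proof is correct. Note that the paper states this corollary \emph{without} proof---it appears immediately after the proved closed form for $S(n)=\sum_{i=1}^{2^{n}}e(i)$---so there is no authors' argument to compare against; your derivation is exactly the kind of reduction the placement as a corollary suggests, and it is arguably cleaner than redoing the induction of the preceding proof (which would require splitting the odd terms via $e(4k+1)=e(k)+1$ and $e(4k+3)=e(k+1)+1$ and handling a three-term recurrence). Your key steps all check out: the even-indexed part satisfies
\[
E(n)=\sum_{j=1}^{2^{n-1}}e(2j)=S(n-1)+2^{n-1}
\]
by $e(2j)=e(j)+1$, hence
\[
\sum_{i=1}^{2^{n}}(-1)^{i}e(i)=2E(n)-S(n)=2S(n-1)-S(n)+2^{n},
\]
and substituting the closed form gives, inside the factor $\tfrac{1}{36}$, the combination $(6n-13)2^{n+2}-(6n-7)2^{n+2}+9\cdot 2^{n+2}=3\cdot 2^{n+2}$, the linear part $18n-9$, and the sign part $2(-1)^{n-1}-(-1)^{n}=3(-1)^{n+1}$, which together yield $\tfrac{1}{12}(2^{n+2}+6n-3+(-1)^{n+1})$ as claimed. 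You were also right to flag and verify the one genuine edge case: the use of $S(0)$ at $n=1$ requires checking that the closed form extends to $n=0$ (both sides equal $0$), since the preceding corollary is stated only for $n\geq 1$. Your numerical checks $A(1)=1$, $A(2)=2$, $A(3)=4$ against the sequence $0,1,1,2,1,2,2,3,\ldots$ are likewise correct.
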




\section{Special values of $B_{n}(t)$ and some of their
consequences}\label{sec5}

In this section we compute some special values of the polynomials
$B_{n}(t)$. Next, we use the computed values it in order to solve
some polynomial diophantine equations involving Stern polynomials.

We start with the following.

\begin{thm}\label{specialvalues}
We have the following:
\begin{enumerate}
\item If $i\in\{0,1\}$ then:
\begin{equation*}
B_{n}(t)\equiv i \pmod t  \Leftrightarrow n\equiv i\pmod 2.
\end{equation*}

\item If $i\in\{-1,0,1\}$ then:
\begin{equation*}
B_{n}(t)\equiv i \pmod {t+1}  \Leftrightarrow n\equiv i\pmod 3.
\end{equation*}

\item For each $n\in\N$ we have the following congruences:
\begin{equation*}
\begin{array}{l}
  B_{n}(t)\equiv s(n) \pmod {t-1}, \\
  B_{n}(t)\equiv n \pmod {t-2}.
\end{array}
\end{equation*}
\end{enumerate}

\end{thm}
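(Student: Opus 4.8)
The plan is to convert each congruence into a statement about a single numerical value of $B_n$, and then establish that value by strong induction on $n$ using the defining recurrence. The starting observation is the remainder theorem: for any constant $a$ and any constant $c$, the congruence $B_n(t)\equiv c\pmod{t-a}$ holds precisely when $B_n(a)=c$. Thus part (1) is the claim that $B_n(0)=0$ for even $n$ and $B_n(0)=1$ for odd $n$; part (2) is the claim that $B_n(-1)$ equals $0,1,-1$ according as $n\equiv 0,1,2\pmod 3$; and part (3) asserts $B_n(1)=s(n)$ and $B_n(2)=n$. Since in (1) the two target values $0,1$ are distinct and in (2) the three target values $0,1,-1$ are distinct, the stated biconditionals follow immediately once the exact value is pinned down.

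For part (1) I would specialize the recurrence at $t=0$: it gives $B_{2m}(0)=0\cdot B_m(0)=0$ and $B_{2m+1}(0)=B_m(0)+B_{m+1}(0)$, with $B_0(0)=0$ and $B_1(0)=1$. Even indices are therefore $0$ outright. For an odd index $2m+1$, exactly one of $m,m+1$ is even and the other odd, so by the induction hypothesis the sum is $0+1=1$; this closes the induction.

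Part (2) is the one requiring the most care. Specializing at $t=-1$ yields $B_{2m}(-1)=-B_m(-1)$ and $B_{2m+1}(-1)=B_m(-1)+B_{m+1}(-1)$. Writing $f(0)=0$, $f(1)=1$, $f(2)=-1$ for the claimed values, I would run a strong induction and split according to the residue of $m$ modulo $3$ within each of the two parity cases. The bookkeeping amounts to checking, for all three residues of $m$, that $-f(m\bmod 3)$ matches $f(2m\bmod 3)$ in the even case and that $f(m\bmod 3)+f((m+1)\bmod 3)$ matches $f((2m+1)\bmod 3)$ in the odd case; for instance when $m\equiv 1$ one gets $f(1)+f(2)=0=f(0)$ while $2m+1\equiv 0$. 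The main (modest) obstacle is exactly this verification that doubling and the map $m\mapsto 2m+1$ act correctly on residues modulo $3$; once all six checks are carried out the induction is complete.

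For part (3) both statements are immediate inductions. At $t=1$ the recurrence becomes $B_{2m}(1)=B_m(1)$ and $B_{2m+1}(1)=B_m(1)+B_{m+1}(1)$ with $B_0(1)=0$, $B_1(1)=1$, which is verbatim the recurrence defining $s(n)$ with the same initial data; hence $B_n(1)=s(n)$. At $t=2$ one has $B_{2m}(2)=2B_m(2)$ and $B_{2m+1}(2)=B_m(2)+B_{m+1}(2)$, so assuming $B_k(2)=k$ for all $k<n$ gives $B_{2m}(2)=2m$ and $B_{2m+1}(2)=m+(m+1)=2m+1$, and $B_n(2)=n$ follows.
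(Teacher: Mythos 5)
Your proof is correct, but it takes a different route from the paper. The paper reduces, exactly as you do, to evaluating $B_n(a)$ at $a=0,-1,1,2$ via the remainder theorem, and it even remarks that ``each case \ldots can be proved with the help of mathematical induction'' --- but then it deliberately avoids induction and instead computes the four evaluations from the infinite-product generating function of Theorem \ref{genfunt}: at $t=0$ it telescopes $x\prod_{i\ge 0}(1+x^{2^{i+1}})$ against $\prod_{i\ge 0}(1+x^{2^i})=\frac{1}{1-x}$ to get $\frac{x}{1-x^2}$; at $t=-1$ it uses the identity $1-x^{2^i}+x^{2^{i+1}}=\frac{1+x^{3\cdot 2^i}}{1+x^{2^i}}$ to obtain $\frac{x(1-x)}{1-x^3}$, which reads off the values by residue mod $3$; and at $t=2$ it gets $\frac{x}{(1-x)^2}=\sum n x^n$. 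Your strong induction on the recurrence, with the parity argument for $t=0$, the six residue checks mod $3$ for $t=-1$ (all of which do verify, as one confirms directly: e.g.\ $m\equiv 2$ gives $-f(2)=1=f(1)$ with $2m\equiv 1$, and $f(2)+f(0)=-1=f(2)$ with $2m+1\equiv 2$), and the immediate inductions at $t=1,2$, is more elementary and entirely self-contained: it needs nothing beyond the defining recurrence, whereas the paper's argument rests on the product formula of Section \ref{sec3}, whose own proof required convergence estimates (Corollary \ref{ineqforB}). What the paper's approach buys in exchange is the elimination of all case analysis: each evaluation drops out of a single closed-form power series, and the same manipulations yield further identities elsewhere in the paper. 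Both arguments are complete and correct.
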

\begin{proof}
Each case in our theorem can be proved with the help of mathematical
induction. However we use a different approach. Let us note that for
any integer $a$ we have the congruence $B_{n}(t)\equiv
B_{n}(a)\pmod{t-a}$. So, we see that in order to prove our theorem
it is enough to know the value of the polynomial $B_{n}(t)$ at
$t=0,-1,1,2$. We will compute these values with the help of the
generating function of the sequence $\{B_{n}(t)\}_{n=0}^{\infty}$.

We start with the evaluation of $B_{n}(t)$ at $t=0$
\begin{align*}
B(0,x)&=\sum_{n=1}^{\infty}B_{n}(0)x^n=x\prod_{i=0}^{\infty}(1+x^{2^{i+1}})\\
      &=\frac{x}{1+x}\prod_{i=0}^{\infty}(1+x^{2^i})=\frac{x}{1-x^2}=\sum_{i=0}^{\infty}x^{2i+1},
\end{align*}
where in the last equality we used the well known formula
$\prod_{i=0}^{\infty}(1+x^{2^i})=\frac{1}{1-x}$. Our computation
shows that $B_{n}(0)=0$ for $n$ even and $B_{n}(0)=1$ for $n$ odd.

Now we compute the value of $B_{n}(-1)$. Similarly as in the
previous case we use the generating function of the sequence
$\{B_{n}(t)\}_{n=0}^{\infty}$ . Before we do that let us note that
\begin{equation*}
1-x^{2^i}+x^{2^{i+1}}=\frac{1+x^{3\cdot2^{i}}}{1+x^{2^{i}}}\quad\mbox{for}\;i\in\N.
\end{equation*}
This identity implies that
\begin{align*}
B(-1,x)&=\sum_{n=1}^{\infty}B_{n}(-1)x^n=x\prod_{i=0}^{\infty}(1-x^{2^i}+x^{2^{i+1}})\\
      &=x\prod_{i=0}^{\infty}\left(\frac{1+x^{3\cdot2^i}}{1+x^{2^i}}\right)=x\frac{\frac{1}{1-x^3}}{\frac{1}{1-x}}=\sum_{i=0}^{\infty}x^{3i+1}-\sum_{i=1}^{\infty}x^{3i-1}.
\end{align*}
Comparing now these two expansions of $B(-1,x)$ we get that
$B_{n}(-1)=0$ for $n\equiv0\pmod{3}$, $B_{n}(-1)=1$ for
$n\equiv1\pmod{3}$ and $B_{n}(-1)=-1$ for $n\equiv-1\pmod{3}$.

The first congruence given in (3) is obvious due to the fact that
$B_{n}(1)=s(n)$. The second comes from the identity
\begin{equation*}
B(2,x)=x\prod_{n=0}^{\infty}(1+2x^{2^n}+x^{2^{n+1}})=x\prod_{n=0}^{\infty}(1+x^{2^n})^2=\frac{x}{(1-x)^2}=\sum_{n=1}^{\infty}nx^{n}.
\end{equation*}
Our theorem is proved.
\end{proof}

We use the above theorem to prove the following.

\begin{thm}\label{eqofzerodeg}
If $\op{deg}_{t}(B_{n+1}(t)-B_{n}(t))=0$ for some $n\in\N$ then
$n=2^{m}-2$ and $B_{n+1}(t)-B_{n}(t)=1$.
\end{thm}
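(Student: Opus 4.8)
The plan is to translate the hypothesis into a single functional equation relating two consecutive Stern polynomials and then run a $2$-adic descent on the index. Write $D_n(t)=B_{n+1}(t)-B_n(t)$; the assumption $\op{deg}_t D_n=0$ says that $D_n$ is a nonzero constant, say $D_n=c$. First I would split according to the parity of $n$. Using $B_{2k}(t)=tB_k(t)$ and $B_{2k+1}(t)=B_k(t)+B_{k+1}(t)$ one obtains the two basic identities
\[
D_{2k}=B_{k+1}-(t-1)B_k,\qquad D_{2k+1}=(t-1)B_{k+1}-B_k .
\]
Thus the even case $n=2k$ is equivalent to the equation $P_c(k)\colon B_{k+1}=(t-1)B_k+c$, while the odd case $n=2k+1$ is equivalent to $Q_c(k)\colon B_k=(t-1)B_{k+1}-c$.

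I would then analyze $P_c(k)$ by descent, using the bound $|e(k+1)-e(k)|\le 1$ already recorded in the paper. If $k=2j$ is even with $k\ge 2$, substituting the recurrences into $P_c(k)$ forces $B_{j+1}=(t^2-t-1)B_j+c$; since $B_j\neq 0$ for $j\ge1$, the right-hand side has degree $e(j)+2$, giving $e(j+1)=e(j)+2$, which is impossible. Hence $k$ must be odd, $k=2j+1$, and a short computation collapses $P_c(2j+1)$ exactly to $P_c(j)$, the constant $c$ being preserved. Iterating, the index is odd at every stage, so it descends through $k,\ (k-1)/2,\dots$ down to $0$; reaching the terminal equation $P_c(0)\colon 1=c$ simultaneously shows $c=1$ and forces the binary expansion of the original $k$ to be a block of $1$'s, i.e.\ $k=2^{r}-1$. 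Therefore $n=2k=2^{r+1}-2=2^m-2$ and $D_n=1$, as claimed.

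Finally I would rule out the odd case $n=2k+1$ by the same mechanism applied to $Q_c$. If $k$ is odd, the step produces $B_j=(t^2-t-1)B_{j+1}-c$, again a degree-$2$ jump that is impossible, while if $k$ is even $Q_c(2j)$ collapses to $Q_c(j)$. Stripping the factors of $2$ from $k$, one always arrives either at $Q_c(m)$ with $m$ odd (contradictory) or at $Q_c(0)\colon c=t-1$, which is impossible for a constant $c$. Hence no odd $n$ can occur. (Alternatively, the odd case is dispatched instantly via Theorem \ref{specialvalues}: evaluating at $t=0$ gives $D_n(0)=-1$ for odd $n$, whereas evaluating at $t=-1$ yields $D_n(-1)\in\{1,-2\}$ according to $n\bmod 3$, so $D_n$ cannot be constant.)

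The main obstacle, and the step I would check most carefully, is the degree bookkeeping that drives the descent: one must be certain that multiplication by $t-1$ (resp.\ $t^2-t-1$) raises the degree by exactly $1$ (resp.\ $2$), which relies on $B_j\neq 0$ for $j\ge 1$, and that the inequality $|e(j+1)-e(j)|\le 1$ then produces a genuine contradiction in the even subcase. It is precisely the base case of the descent that forces \emph{both} halves of the conclusion at once — the shape $n=2^m-2$ and the value $1$ — so pinning that down correctly is the crux.
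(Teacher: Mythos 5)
Your proof is correct, and it follows a genuinely different route from the paper's. The paper argues in three stages: odd $n$ is excluded by comparing the values of $B_{n+1}-B_{n}$ at $t=0$ and $t=-1$ (Theorem \ref{specialvalues}); the case $4\mid n$ is excluded by differentiating the supposed identity at $t=0$ and invoking the external result of \cite{Kla} that $B_{n}'(0)$ counts the ones in the Gray code of $n-1$; and the remaining case $n\equiv 2\pmod{4}$ is handled by an algebraic identity together with a minimal-counterexample argument. You replace all three stages by a single $2$-adic descent on the functional equations $P_c$ and $Q_c$, whose only inputs are the defining recurrence, the nonvanishing of $B_j$ for $j\geq 1$, and the bound $|e(j+1)-e(j)|\leq 1$ recorded in Section \ref{sec4}. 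What this buys: you avoid the Gray-code result entirely, so the proof is self-contained, and the terminal equation $P_c(0)$ delivers both halves of the conclusion at once --- the value $c=1$ and, via the forced pattern of odd indices under $k\mapsto(k-1)/2$, the shape $k=2^{r}-1$, hence $n=2^{m}-2$ --- whereas the paper never explicitly evaluates the constant for $n=2^{m}-2$ (it must be extracted separately from Corollary \ref{+-expression}).

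A further point in your favor: your reduction $P_c(2j+1)\Leftrightarrow P_c(j)$ amounts to the identity $B_{4j+3}-B_{4j+2}=B_{2j+1}-B_{2j}$, and this is the \emph{correct} version of the computation the paper performs at the analogous step. The paper expands $B_{4m+3}$ as $B_{2m+1}+B_{2m}$ (it should be $B_{2m+1}+B_{2m+2}$) and concludes $B_{4m+3}-B_{4m+2}=B_{2m+1}-B_{2m+2}$, which is already false for $m=1$: there $B_{7}-B_{6}=1$ while $B_{3}-B_{4}=t+1-t^{2}$. Consequently the paper's descent lands on the odd index $2m+1$, while the correct descent lands on the even index $2m$, exactly as yours does. (The paper's minimality argument survives this repair, because $2m=2^{k}-2$ would force $4m+2=2^{k+1}-2$.) So your proposal is not only valid but, at the crucial step, more careful than the published proof.
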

\begin{proof}
First of all we observe that if $B_{n+1}(t)-B_{n}(t)$ is a constant
then $n$ is even. Indeed, let us suppose that $n=2m+1$ for some
$m\in\N$. Then $n\equiv 1,3$ or $5\pmod{6}$. Now we use the
characterization of values $B_{n}(0)$ and $B_{n}(-1)$ given in
Theorem \ref{specialvalues}.

If $n\equiv 1\pmod{6}$ then we have
\begin{equation*}
\begin{array}{lll}
  B_{n+1}(-1)-B_{n}(-1)&= & -1-1=-2 \\
  B_{n+1}(0)-B_{n}(0)&= & 0-1=-1,
\end{array}
\end{equation*}
and we get a contradiction with the condition
$\op{deg}_{t}(B_{n+1}(t)-B_{n}(t))=0$. If now $n\equiv 3\pmod{6}$
then we have
\begin{equation*}
\begin{array}{lll}
  B_{n+1}(-1)-B_{n}(-1)&= & 1-0=1 \\
  B_{n+1}(0)-B_{n}(0)&= & 0-1=-1,
\end{array}
\end{equation*}
and again we get a contradiction. Finally, if $n\equiv 5\pmod{6}$
then we have
\begin{equation*}
\begin{array}{lll}
  B_{n+1}(-1)-B_{n}(-1)&= & 0-(-1)=1 \\
  B_{n+1}(0)-B_{n}(0)&= & 0-1=-1,
\end{array}
\end{equation*}
and once again we get a contradiction. Our reasoning shows that if
the polynomial $B_{n+1}(t)-B_{n}(t)$ is constant then $n$ is even.

Now we show that if $\op{deg}_{t}(B_{n+1}(t)-B_{n}(t))=0$ then
$n\equiv2\pmod{4}$. Suppose that $n=4m$ for some $m$. Then we have
$B_{4m}(t)=t^2B_{m}$ and we get that $B_{4m+1}(t)-t^2B_{m}(t)=c$ for
some $c\in\Z$. If we now differentiate this relation with respect to
$t$ we get $B_{4m+1}'(t)-t(2B_{m}(t)+tB_{m}'(t))=0$ for all $\in\R$.
Taking now $t=0$ we get that $B_{4m+1}'(0)=0$ which is a
contradiction. This follows from the fact proved in \cite[Theorem
8]{Kla} that the sequence $B_{n}'(0)$ counts the number of 1's in
the standard Grey code for $n-1$ and thus it is nonzero for $n\geq
1$.

Now we are ready to finish the proof of our theorem. Let us suppose
that $n$ is the smallest integer not of the form $2^{k}-2$ with the
property $\op{deg}_{t}(B_{n+1}(t)-B_{n}(t))=0$. From the preceding
reasoning we know that $n=4m+2$ for some $m\in\N$ and there exists a
positive integer $k$ such that $2^{k}-2<4m+2<2^{k+1}-2$. This
implies that $2^{k-1}-2<2m+1< 2^{k}-2$. Now we have
\begin{align*}
B_{4m+3}(t)-B_{4m+2}(t)&=B_{2m+1}(t)+B_{2m}(t)-tB_{2m+1}(t)\\
                        &=B_{m}(t)+B_{m+1}(t)+tB_{m}(t)-tB_{m}(t)-tB_{m+1}(t)\\
                        &=B_{m}(t)+(1-t)B_{m+1}(t)=B_{2m+1}(t)-B_{2m+2}(t).
\end{align*}
This computation shows that the polynomial $B_{2m+2}(t)-B_{2m+1}(t)$
is constant. So we see that the number $n'=2m+1$ has the property
$\op{deg}_{t}(B_{n'+1}(t)-B_{n'}(t))=0$ and we have $n'<n$. Moreover
$n'$ is not of the form $2^k-2$ because $n'$ is odd. So we get a
contradiction with the assumption of minimality of $n=4m+2$.
\end{proof}

Now we easily deduce the following generalization of Theorem
\ref{eqofzerodeg}.

\begin{cor}
If $a\geq 2$ is an integer then the equation $B_{n+a}(t)-B_{n}(t)=c$
has no solutions in integers $n, c$.
\end{cor}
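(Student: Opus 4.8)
The plan is to reduce the claim to a two-point evaluation of the purported polynomial identity $B_{n+a}(t)-B_{n}(t)=c$, using the special values collected in Theorem \ref{specialvalues}. Suppose, for contradiction, that for some $a\geq 2$ there exist integers $n$ and $c$ with $B_{n+a}(t)-B_{n}(t)=c$ as polynomials in $t$. Then this identity must persist upon substituting any fixed numerical value of $t$, and I would exploit two particularly convenient substitutions.

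First I would substitute $t=2$. By the second congruence in part (3) of Theorem \ref{specialvalues} we have $B_{m}(2)=m$ for every $m\in\N$, so the identity forces
\begin{equation*}
c=B_{n+a}(2)-B_{n}(2)=(n+a)-n=a.
\end{equation*}
Thus the constant $c$ is pinned down exactly: $c=a\geq 2$. Next I would substitute $t=0$. By part (1) of Theorem \ref{specialvalues}, $B_{m}(0)\in\{0,1\}$ for every $m$ (it equals $1$ for odd $m$ and $0$ for even $m$), whence
\begin{equation*}
c=B_{n+a}(0)-B_{n}(0)\in\{-1,0,1\}.
\end{equation*}
Combining the two evaluations yields $a=c\in\{-1,0,1\}$, contradicting $a\geq 2$ and completing the argument.

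The proof carries essentially no obstacle: the only point requiring care is recognizing that the evaluation at $t=2$ already determines $c$ exactly (namely $c=a$), after which the bounded range of $B_{m}(0)$ immediately excludes every $a\geq 2$. The evaluation at $t=0$ is essential here, since the values $B_{m}(-1)\in\{-1,0,1\}$ from part (2) of Theorem \ref{specialvalues} only bound the difference by $2$ in absolute value and would leave the borderline case $a=2$ open. This dichotomy also clarifies why the excluded case $a=1$ of Theorem \ref{eqofzerodeg} genuinely admits solutions: there $c=a=1$ is consistent with $B_{n+1}(0)-B_{n}(0)=1$, which forces $n$ to be even, in agreement with the answer $n=2^{m}-2$.
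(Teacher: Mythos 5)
Your proof is correct and follows essentially the same route as the paper: evaluate at $t=2$ (via part (3) of Theorem \ref{specialvalues}) to force $c=a$, then evaluate at $t=0$ to see the difference lies in $\{-1,0,1\}$, contradicting $a\geq 2$. The only difference is cosmetic --- the paper splits into parity cases for $a$ and $n$ at the $t=0$ step, whereas you observe directly that a difference of two values in $\{0,1\}$ is bounded, which is a slightly cleaner way of saying the same thing.
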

\begin{proof}
First of all let us note that if equation $B_{n+a}(t)-B_{n}(t)=c$
has a solution in $n, c$ then $c=a$. Indeed, this follows from the
fact that $B_{n+a}(2)-B_{n}(2)=a$. Now if $a$ is an even integer
then $n+a$ and $n$ have the same parity and $B_{n+a}(0)-B_{n}(0)=0$,
a contradiction. If $a$ is odd then for even $n$ we have that
$B_{n+a}(0)-B_{n}(0)=1<a$, which leads to contradiction. If now $n$
is odd then $B_{n+a}(0)-B_{n}(0)=-1<a$, and once again we arrive at
a contradiction. We thus proved that the equation
$B_{n+a}(t)-B_{n}(t)=c$ has not solutions in integers $n, c$, which
finishes the proof.
\end{proof}
\begin{thm}
If $\op{deg}_{t}(B_{n+1}(t)-B_{n}(t))=1$ for some $n\in\N$ then
$n=1$ and $B_{n+1}(t)-B_{n}(t)=t-1$.
\end{thm}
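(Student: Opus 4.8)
The plan is to follow the structure of the proof of Theorem \ref{eqofzerodeg} very closely, since the statement and its predecessor are natural companions: both ask when $B_{n+1}(t)-B_{n}(t)$ has small degree, and both single out a sparse family of $n$. The strategy will again be to use the special values at $t=0,-1$ from Theorem \ref{specialvalues} to restrict the residue of $n$ modulo small numbers, and then to set up a descent argument that contradicts minimality of a hypothetical bad $n$.

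First I would observe that if $\op{deg}_{t}(B_{n+1}(t)-B_{n}(t))=1$, then the difference is a genuine degree-one polynomial, and one can pin down its constant term via $B_{n+1}(0)-B_{n}(0)\in\{-1,0,1\}$ and its value at $t=-1$ via $B_{n+1}(-1)-B_{n}(-1)\in\{-2,-1,0,1,2\}$ as computed from Theorem \ref{specialvalues}. Evaluating a linear polynomial $\alpha t+\beta$ at $t=0$ and $t=-1$ gives $\beta$ and $\beta-\alpha$, so these two data points determine both coefficients. Running through the residues of $n$ modulo $6$ as in the proof of Theorem \ref{eqofzerodeg} should eliminate most cases and force $n$ into one or two residue classes; in particular I expect the parity and mod-$4$ analysis to again rule out $n$ even of the form $4m$ (using the $B_{4m}(t)=t^{2}B_{m}(t)$ factorization and differentiation at $t=0$, exactly the trick already used), leaving a manageable set of candidates.

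The main step is then a descent. Assuming $n$ is the smallest counterexample not equal to $1$, I would write $n$ in the appropriate residue class and use the recurrences $B_{2k}(t)=tB_{k}(t)$, $B_{2k+1}(t)=B_{k}(t)+B_{k+1}(t)$ to rewrite $B_{n+1}(t)-B_{n}(t)$ in terms of a difference $B_{m+1}(t)-B_{m}(t)$ (or $B_{m}(t)-B_{m+1}(t)$) for a strictly smaller index $m$, just as the identity
\begin{equation*}
B_{4m+3}(t)-B_{4m+2}(t)=B_{2m+1}(t)-B_{2m+2}(t)
\end{equation*}
was produced in the earlier proof. Controlling how the degree drops or is preserved under this reduction is the delicate part: I must verify that a degree-one difference at level $n$ forces a degree-one (not degree-zero) difference at the smaller level $m$, so that the smaller index is genuinely a counterexample of the same type. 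Here Theorem \ref{eqofzerodeg} itself is a useful input, since it already classifies the degree-zero differences and lets me exclude the collapse to a constant.

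The hard part will be handling the boundary between the degree-$1$ and degree-$0$ regimes cleanly, since the algebraic reductions can turn a degree-one difference into a degree-zero one (as the displayed identity above shows a sign flip that, combined with Theorem \ref{eqofzerodeg}, pins things down). I would therefore organize the case analysis so that every reduction either exhibits the base case $n=1$, giving $B_{2}(t)-B_{1}(t)=t-1$, or produces a strictly smaller index whose difference still has degree $1$, thereby contradicting minimality. Once the residue restrictions and the descent are in place, concluding that $n=1$ and $B_{n+1}(t)-B_{n}(t)=t-1$ should be routine.
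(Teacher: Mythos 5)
Your plan has the right opening move but the wrong endgame, and the gap sits exactly where you flagged ``the hard part.'' A first, smaller issue: the values at $t=0$ and $t=-1$ alone eliminate only $n\equiv 0,2\pmod 6$. For $n\equiv 1,3,4,5\pmod 6$ they determine linear candidates with \emph{nonzero} slope (namely $t-1$, $-2t-1$, $3t+1$, $-2t-1$ respectively), so no contradiction arises from those two data points. You need the third special value of Theorem \ref{specialvalues}, $B_{n}(2)=n$, which gives $B_{n+1}(2)-B_{n}(2)=1$ identically. Once that is used, the whole case analysis collapses to parity, which is all the paper does: for even $n$ the difference would be $at+1$ with $2a+1=1$, so $a=0$ and \emph{every} even $n$ is impossible; for odd $n$ the difference is forced to equal $t-1$. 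In particular the $n=4m$ differentiation trick is superfluous here, and --- this is the key structural difference from Theorem \ref{eqofzerodeg} --- the class $n\equiv 2\pmod 4$, the only one on which that proof's descent identity operates, is already dead.

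Consequently the sole surviving case is odd, $n=2m+1$, and there your descent has nothing to act on. The recurrences give
\begin{equation*}
B_{n+1}(t)-B_{n}(t)=tB_{m+1}(t)-B_{m}(t)-B_{m+1}(t)=(t-1)B_{m+1}(t)-B_{m}(t),
\end{equation*}
which is \emph{not} of the form $\pm\bigl(B_{m+1}(t)-B_{m}(t)\bigr)$; the identity relating $B_{4m+3}(t)-B_{4m+2}(t)$ to a consecutive difference at a smaller index, which you quote from the proof of Theorem \ref{eqofzerodeg}, applies only to indices $\equiv 2\pmod 4$. So a minimal odd counterexample does not produce a smaller index whose consecutive difference has degree one, and the induction cannot get started. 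The missing idea --- and the paper's entire endgame --- is one more evaluation: since the difference is pinned to $t-1$, we have $(t-1)B_{m+1}(t)-B_{m}(t)=t-1$; setting $t=1$ gives $B_{m}(1)=s(m)=0$, and positivity of the Stern sequence forces $m=0$, hence $n=1$ and $B_{2}(t)-B_{1}(t)=t-1$. The paper's proof is thus completely direct: evaluations at $t=0$ and $t=2$ split by parity, then $t=1$; no minimality, no induction, no mod-$6$ bookkeeping. (A descent \emph{can} be salvaged, but from the rewritten relation $B_{m}(t)=(t-1)(B_{m+1}(t)-1)$: when $m=2k$ it reproduces itself at index $k$, while odd $m$ is impossible by degree considerations using $|e(j+1)-e(j)|\leq 1$; this is a genuinely different descent from the consecutive-difference one you propose.)
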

\begin{proof}
We consider two cases: $n$ even and $n$ odd.

If $n$ is even then $B_{n+1}(0)-B_{n}(0)=1-0=1$. So if the equality
$\op{deg}_{t}(B_{n+1}(t)-B_{n}(t))=1$ holds we have
$B_{n+1}(t)-B_{n}(t)=at+1$ for some $a\in\Z$. Putting now $t=2$ and
using part (2) of Theorem \ref{specialvalues} we get that
$B_{n+1}(2)-B_{n}(2)=n+1-n=1$. So we deduce that $2a+1=1$ and we get
$a=0$, a contradiction.

If $n$ is odd then $B_{n+1}(0)-B_{n}(0)=0-1=-1$. So if the equality
$\op{deg}_{t}(B_{n+1}(t)-B_{n}(t))=1$ holds we have
$B_{n+1}(t)-B_{n}(t)=at-1$ for some $a\in\Z$. Putting now $t=2$ and
using part (2) of Theorem \ref{specialvalues} we get that
$B_{n+1}(2)-B_{n}(2)=n+1-n=1$. So we deduce that $2a-1=1$ and we get
$a=1$. Because $n$ is odd we have $n=2m+1$ and thus
\begin{equation*}
B_{n+1}(t)-B_{n}(t)=B_{2m+2}(t)-B_{2m+1}(t)=(t-1)B_{m+1}(t)-B_{m}(t)=t-1.
\end{equation*}
Putting now $t=1$ we get $B_{m}(1)=0$. We know that $B_{m}(1)=s(m)$
is the Stern diatomic sequence and in particular $s(m)>0$ for
positive $m$. Thus we deduce that $m=0$, which implies $n=1$ and we
get the equality $B_{2}(t)-B_{1}(t)=t-1$. Our theorem is proved.
\end{proof}

Essentially the same method as in the proof of the Theorem
\ref{eqofzerodeg} can be used to characterize those integers for
which $\op{deg}_{t}(B_{n+1}(t)-B_{n}(t))=2$. However, because the
calculations are rather lengthy we leave the task of proving the
following theorem to the reader.

\begin{thm}
If $\op{deg}_{t}(B_{n+1}(t)-B_{n}(t))=2$ for some $n\in\N$ then
$n=2^m+1$ and then $B_{n+1}(t)-B_{n}(t)=t^2-t-1$ or $n=3\cdot2^m-2$
and then $B_{n+1}(t)-B_{n}(t)=-t^2+t+1$.
\end{thm}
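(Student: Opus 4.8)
Write $D_n(t):=B_{n+1}(t)-B_n(t)$, so the hypothesis reads $\op{deg}_t D_n=2$. My plan follows the template of Theorem~\ref{eqofzerodeg}: first split according to $n\bmod 4$ to obtain reduction identities, then run a descent. Writing $n=4m+r$ and applying the defining recursion $B_{2k}=tB_k$, $B_{2k+1}=B_k+B_{k+1}$ twice, I would first establish the four identities
\begin{align*}
D_{4m}(t) &= B_{m+1}(t)+(1+t-t^2)B_m(t), & D_{4m+1}(t) &= D_{2m+1}(t),\\
D_{4m+2}(t) &= D_{2m}(t), & D_{4m+3}(t) &= (t^2-t-1)B_{m+1}(t)-B_m(t).
\end{align*}
The two middle identities are the index-halving step underlying the descent, and all four are routine manipulations of the recursion.

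The residues $r=0$ and $r=3$ are handled directly. Recall that $|e(m+1)-e(m)|\le 1$ for every $m$ (as observed in the derivation preceding Corollary~\ref{ineqfore}). Hence in $D_{4m}$ the summand $(1+t-t^2)B_m$ has degree exactly $e(m)+2$, which strictly exceeds $\op{deg}_t B_{m+1}=e(m+1)\le e(m)+1$; the two degrees differ, so no cancellation of the top coefficient can occur and $\op{deg}_t D_{4m}=e(m)+2$. Likewise $\op{deg}_t D_{4m+3}=e(m+1)+2$. Since $e(j)=0$ holds only for $j=1$, the equality $\op{deg}_t D_n=2$ with $n\equiv 0\pmod 4$ forces $m=1$, i.e. $n=4$, and with $n\equiv 3\pmod 4$ forces $m=0$, i.e. $n=3$. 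A direct computation gives $D_3(t)=t^2-t-1$ and $D_4(t)=-t^2+2t+1$.

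For $r\in\{1,2\}$ I would run the descent. If $\op{deg}_t D_n=2$ and $n=4m+1$, the second identity gives $\op{deg}_t D_{2m+1}=2$ with $2m+1<n$; if $n=4m+2$, the third gives $\op{deg}_t D_{2m}=2$ with $2m<n$. Each step preserves the parity of the index, strictly decreases it, and again falls into one of the four residue classes, so iterating drives every odd solution down to the unique odd base case $n=3$ and every even solution down to $n=4$. Reversing the two deterministic steps $j\mapsto 2j-1$ (from $n=4m+1$ with $(n+1)/2=j$) and $j\mapsto 2j+2$ (from $n=4m+2$ with $(n-2)/2=j$) and solving the resulting linear recurrences yields exactly the chains $3,5,9,17,\ldots=\{2^m+1\}$ and $4,10,22,46,\ldots=\{3\cdot 2^m-2\}$ (with $m\ge 1$). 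Finally, telescoping the reductions $D_{2^m+1}=D_{2^{m-1}+1}=\cdots=D_3$ and $D_{3\cdot 2^m-2}=\cdots=D_4$ shows that $D_n=t^2-t-1$ on the first family and $D_n=-t^2+2t+1$ on the second.

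The only genuinely delicate point is the no-cancellation claim for $r\in\{0,3\}$: everything there rests on the leading term of $(1+t-t^2)B_m$ (resp. $(t^2-t-1)B_{m+1}$) surviving, which is exactly guaranteed by $|e(m+1)-e(m)|\le 1$. As an independent cross-check one can pin the two polynomials down as in the degree-$0$ proof, using $D_n(2)=1$ together with the values of $D_n(0)$ and $D_n(-1)$ from Theorem~\ref{specialvalues}: this forces $D_n=t^2-t-1$ when $n$ is odd and $D_n=-t^2+2t+1$ when $n$ is even. I note that this computation makes the second exceptional polynomial $-t^2+2t+1$ rather than the $-t^2+t+1$ printed in the statement, so that coefficient appears to be a typographical slip.
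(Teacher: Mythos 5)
Your proof is correct; note, however, that there is no proof in the paper to compare it with: the authors explicitly leave this theorem to the reader, saying only that ``essentially the same method'' as in Theorem \ref{eqofzerodeg} works. Writing, as you do, $D_n=B_{n+1}-B_n$, I checked your four mod-$4$ identities against the recursion $B_{2k}(t)=tB_k(t)$, $B_{2k+1}(t)=B_k(t)+B_{k+1}(t)$: all four are correct, and in fact your $D_{4m+2}=D_{2m}$ is the corrected form of a slip in the paper's own proof of Theorem \ref{eqofzerodeg}, where $B_{4m+3}$ is expanded as $B_{2m+1}+B_{2m}$ instead of $B_{2m+1}+B_{2m+2}$, leading to the false identity $B_{4m+3}-B_{4m+2}=B_{2m+1}-B_{2m+2}$ (for $m=1$ the left side is $1$ while the right side is $1+t-t^2$). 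Where you genuinely depart from the hinted template is in the residues $0$ and $3$: instead of special values and the derivative at $t=0$, you use $|e(m+1)-e(m)|\le 1$ to see that the leading term of $(1+t-t^2)B_m$, resp.\ $(t^2-t-1)B_{m+1}$, cannot be cancelled, so $\op{deg}_t D_{4m}=e(m)+2$ and $\op{deg}_t D_{4m+3}=e(m+1)+2$; combined with the fact that $e(j)=0$ only for $j=1$ (immediate from the corollary $C_n=3^n$, or from Corollary \ref{ineqfore} plus inspection of $j\le 3$), this pins down $n=4$ and $n=3$ with far less case analysis. The descent for residues $1,2$, the reversal producing exactly the chains $\{2^m+1\}$ and $\{3\cdot2^m-2\}$, and the telescoping of the constant difference along each chain are all sound.

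You are also right that the printed polynomial $-t^2+t+1$ is erroneous: directly, $B_5(t)-B_4(t)=1+2t-t^2$, and moreover every solution must satisfy $B_{n+1}(2)-B_n(2)=1$ by Theorem \ref{specialvalues}, whereas $-t^2+t+1$ evaluates to $-1$ at $t=2$; the correct polynomial on the even family is $-t^2+2t+1$. It is plausible that this typo and the computational slip in the proof of Theorem \ref{eqofzerodeg} share a common source: the false identity $D_{4m+2}=-D_{2m+1}$ would suggest $D_{3\cdot2^m-2}=-(t^2-t-1)=-t^2+t+1$, which is exactly what the statement asserts.
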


\section{Problems and conjectures}\label{sec6}

In this section we state some problems and conjectures which are
related to the sequence of Stern polynomials or to the sequence of
their degrees. Based on extensive numerical computation with PARI we
state the following.

\begin{conj}
If $a\in\Q$ and there exists a positive integer $n$ such that
$B_{n}(a)=0$ then $a\in\{-1, -1/2, -1/3, 0\}$.
\end{conj}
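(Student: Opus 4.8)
The plan is to reduce the conjecture to a purely arithmetic non-vanishing statement about a single integer sequence; the reductions are elementary, but I will be candid that the final step is the genuine obstacle. First I would dispose of non-negative $a$. An immediate induction on the recurrence shows $B_n(a)>0$ for every $a>0$ and $n\geq 1$ (the base case is $B_1=1$, and both $B_{2n}=aB_n$ and $B_{2n+1}=B_n+B_{n+1}$ preserve positivity when $a>0$); this is already implicit in Theorem \ref{extremalofB}. Hence no positive rational is a root, while $a=0$ is a root precisely of the even-indexed polynomials and lies in the allowed set. For negative roots I invoke the rational root theorem together with Theorem \ref{specialvalues}: if $n$ is odd then $B_n(0)=1$, so any rational root $-p/q$ in lowest terms satisfies $p\mid B_n(0)=1$, forcing $p=1$. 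If $n$ is even, write $n=2^v m$ with $m$ odd; then $B_n(t)=t^vB_m(t)$, so the nonzero roots of $B_n$ coincide with those of $B_m$. Consequently every nonzero rational root of any $B_n$ has the shape $a=-1/q$ with $q\in\N_+$, and it suffices to analyse odd indices.

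Since $-1,-1/2,-1/3$ are already realised (e.g.\ $B_3=1+t$, $B_5=1+2t$, $B_{21}=(1+t)(1+3t)$), the entire content becomes: $B_n(-1/q)\neq 0$ for every odd $n$ and every $q\geq 4$. Here I would clear denominators by setting $\beta_n:=q^{e(n)}B_n(-1/q)\in\Z$, so that $B_n(-1/q)=0\iff\beta_n=0$. From $e(2n)=e(n)+1$ and $e(2n+1)=\op{max}\{e(n),e(n+1)\}$ one obtains the integer recurrence $\beta_{2n}=-\beta_n$ and, writing $\delta=e(n)-e(n+1)\in\{-1,0,1\}$,
\[
\beta_{2n+1}=q^{\op{max}\{0,-\delta\}}\beta_n+q^{\op{max}\{0,\delta\}}\beta_{n+1},
\]
with $\beta_1=1$. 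The goal is then $\beta_n\neq 0$ for all $n$ whenever $q\geq 4$.

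Finally, the hard step. I would first record that a purely real-analytic attack is hopeless: because the linear coefficient $B_n'(0)$ can be as large as $\asymp\lg n$ (it counts the $1$'s in the Gray code of $n-1$, by \cite[Theorem 8]{Kla}), the odd-indexed $B_n$ acquire genuine real zeros tending to $0$, so no fixed positivity bound on $[-1/4,0)$ can hold; these near-zero zeros are merely \emph{forced to be irrational}, which is exactly what the conjecture asserts. Thus one must show directly that the integer $\beta_n$ never vanishes. The promising structural feature is that, since $e$ is a $2$-regular sequence, the triple $(\beta_n\bmod q,\ \beta_{n+1}\bmod q,\ \op{sgn}(e(n)-e(n+1)))$ evolves under a finite automaton reading the binary digits of $n$, making the vanishing question \emph{for a fixed $q$} a finite search.

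The obstruction to a uniform proof for all $q\geq 4$ is the additive step $\beta_{2n+1}=\beta_n+\beta_{n+1}$ that occurs when $e(n)=e(n+1)$: reduction modulo $q$ permits cancellation, so a congruence argument alone cannot exclude $\beta_n=0$. I would try to repair this by adjoining a $q$-adic valuation estimate for the companion pair $(\beta_n,\beta_{n+1})$, controlled through the determinant identity $B_{k+1}(t)B_{2^{N}-k}(t)-B_k(t)B_{2^{N}-k-1}(t)=t^{N}$ of Theorem \ref{baslem3}; evaluated at $t=-1/q$ this forces a coprimality-to-$q$ constraint on consecutive $\beta$'s (the analogue of Corollary \ref{gcd}) that might obstruct the exact cancellation. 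I expect this coupling of the finite automaton with a $q$-adic monovariant to be the decisive and genuinely difficult ingredient, and it is presumably the reason the statement is recorded here only as a conjecture.
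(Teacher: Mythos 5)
You should first be aware that this statement is not a theorem of the paper at all: it appears in Section \ref{sec6} as an open conjecture, supported only by numerical computation in PARI, so there is no proof in the paper to compare against. That said, your preliminary reductions are correct and are worth recording: positivity of $B_{n}(a)$ for $a>0$ follows by the induction you indicate; the case $a=0$ is settled by Theorem \ref{specialvalues}(1); the factorization $B_{2^{v}m}(t)=t^{v}B_{m}(t)$ reduces everything to odd indices; and since $B_{n}(0)=1$ for odd $n$, the rational root theorem does force every nonzero rational root to have the form $-1/q$ with $q\in\N_{+}$. The examples $B_{3}(t)=1+t$, $B_{5}(t)=1+2t$, $B_{21}(t)=(1+t)(1+3t)$ indeed realize $-1,-1/2,-1/3$, and your integer recurrence for $\beta_{n}=q^{e(n)}B_{n}(-1/q)$ (with $\beta_{2n}=-\beta_{n}$ and the $\delta$-weighted additive step) is correctly derived from $e(2n)=e(n)+1$ and $e(2n+1)=\max\{e(n),e(n+1)\}$, using that Stern polynomials have nonnegative coefficients so no cancellation of leading terms occurs.

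But the proposal is not a proof, and you say so yourself: after your reductions the entire content of the conjecture --- that $\beta_{n}\neq 0$ for all odd $n$ and all $q\geq 4$ --- is left open, with only a sketch of strategies that you acknowledge are obstructed. Two concrete criticisms of that sketch. First, the claim that for fixed $q$ the vanishing question becomes ``a finite search'' is an overstatement: the automaton you describe tracks the pair $(\beta_{n},\beta_{n+1})$ only modulo $q$, and $\beta_{n}\equiv 0\pmod{q}$ does not decide whether $\beta_{n}=0$; the finite search settles the question only in the favorable case where the reachable states avoid the residue $0$ altogether, and that is exactly what the additive steps $\beta_{2n+1}=\beta_{n}+\beta_{n+1}$ (occurring when $e(n)=e(n+1)$) prevent you from guaranteeing. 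Second, the determinant identity of Theorem \ref{baslem3} evaluated at $t=-1/q$ does not produce a contradiction from $\beta_{k}=0$: it only forces $B_{k+1}(-1/q)\,B_{2^{N}-k}(-1/q)=(-1/q)^{N}$, i.e.\ nonvanishing at the neighboring indices (the analogue of Corollary \ref{gcd}), which is perfectly consistent with a root at index $k$ itself. So the decisive ingredient is genuinely missing, which is presumably why the authors recorded the statement only as a conjecture; your contribution is an honest reformulation of it, not a resolution.
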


Let us define
$\bar{B}_{n}(t)=t^{e(n)}B_{n}\left(\frac{1}{t}\right)$. A polynomial
$B_{n}(t)$ is reciprocal if $B_{n}(t)=\bar{B}_{n}(t)$. We define
\begin{equation*}
\cal{R}:=\{n:\;B_{n}(t)=\bar{B}_{n}(t)\}.
\end{equation*}

Let us recall that, as was proved in \cite[Theorem 2]{Kla}, if we
write
\begin{equation*}
B_{n}(t)=\sum_{l=0}^{e(n)}\left|\begin{array}{c}
                                  n-1 \\
                                  l
                                \end{array}\right|t^{l}
\end{equation*}
then the number $\left|\begin{array}{c}
                                  n-1 \\
                                  l
                                \end{array}\right|$
is the number of hyperbinary representations of $n-1$ containing
exactly $l$ digits 1. Thus, if $n\in\cal{R}$ then for each $l\leq
e(n)$ we have $\left|\begin{array}{c}
                                  n-1 \\
                                  l
                                \end{array}\right|$=$\left|\begin{array}{c}
                                  n-1 \\
                                  e(n)-l
                                \end{array}\right|$.
It is an interesting question if the set $\cal{R}$ can be
characterized in a reasonable way.

Let us note that if $m$ is odd and $m\in\cal{R}$ then then for all
$k\in\N$ we have $2^{k}m\in\cal{R}$. Thus we see that in order to
characterize the set $\cal{R}$ it is enough to characterize its odd
elements. All odd $n\in\cal{R}, n\leq 2^{17}$, are contained in the
table below.
\bigskip
\begin{equation*}
\begin{array}{l}
  \hline
 n\in\cal{R},\;n\leq 2^{17}  \\
  \hline
1, 3, 7, 9, 11, 15, 27, 31, 49, 59, 63, 123, 127, 135, 177, 201,
225, 251, 255, 287, 297, 363,\\
377, 433, 441, 507, 511, 567, 729, 855, 945, 961, 1019,  1023, 1401, 1969, 2043, 2047,\\
3087, 3135, 3143, 3449, 3969, 4017, 4091, 4095, 5929, 7545, 8113, 8187, 8191, 11327,\\
 15737, 16129, 16305, 16379, 16383, 27711, 28551, 28799,
29199, 32121, 32689, 32763, \\
32767, 36737, 57375, 60479, 64889,
65025, 65457, 65531,
65535, 99449, 121863,\\
126015, 127239, 130425, 130993, 131067, 131071\\
  \hline
\end{array}
\end{equation*}
\bigskip

It is an easy exercise to show that if $n=2^{m}-1$ for some $m\in\N$
or $n=2^{m}-5$ for $m\geq 3$ then the polynomial $B_{n}(t)$ is
reciprocal. Another infinite family of integers with this property
is $n=(2^{m}-1)^2$. It is natural to state the following.

\begin{prob}
Characterize the set
$\cal{R}:=\{n\in\N:\;B_{n}(t)=\bar{B}_{n}(t)\}$.
\end{prob}

During the course of the proof of the Theorem \ref{equalvaluese} we
noted that the set $\cal{E}=\{n:\;e(n)=e(n+1)\}$ contains infinite
arithmetic progressions. It is an interesting question whether other
infinite arithmetic progressions are contained in $\cal{E}$.

Let us define
\begin{equation*}
p_{n}:=u_{n-1}=\frac{4^{n}-1}{3},\quad
q_{n}:=\frac{5\cdot4^{n}-2}{3}=5p_{n}+1.
\end{equation*}
It is easy to see that $p_{1}=1$ and $p_{n+1}=4p_{n}+1$ for $n\geq
1$. Moreover we have $q_{1}=6$ and $q_{n+1}=4q_{n}+2$ for $n\geq 1$.
Now let $i\in\N_{+}$ and consider the arithmetic progressions
\begin{equation*}
U_{i}:=\{2^{2i+1}n+p_{i}:\;n\in\N_{+}\},\quad\quad
V_{i}:=\{2^{2i+1}n+q_{i}:\;n\in\N_{+}\}.
\end{equation*}
We will prove that $\bigcup_{i=1}^{\infty}(U_{i}\cup V_{i})\subset
\cal{E}$. Because $U_{i}\cap U_{j}=\emptyset$ for $i\neq j$ and the
same property holds for $V_{i}, V_{j}$, it is enough to show that
$U_{i}, V_{i}\subset \cal{E}$ for $i\in\N_{+}$. We will proceed by
induction on $i$. We start with $U_{i}$. We know that the set
$U_{1}$ is contained in $\cal{E}$. So let us suppose that
$U_{i}\subset \cal{E}$. We take an element of $U_{i+1}$ and get
\begin{equation*}
e(2^{2i+3}n+p_{i+1})=e(2^{2i+3}n+4p_{i}+1)=e(4(2^{2i+1}n+p_{i})+1)=e(2^{2i+1}n+p_{i})+1,
\end{equation*}
and
\begin{align*}
e(2^{2i+3}n&+p_{i+1}+1)=e(2^{2i+3}n+4p_{i}+2)=e(2^{2i+2}n+2p_{i}+1)+1\\
           &=\op{max}\{e(2^{2i+1}n+p_{i}), e(2^{2i+1}n+p_{i}+1)\}+1=e(2^{2i+1}n+p_{i})+1,
\end{align*}
where the last equality follows from the induction hypothesis. This
shows that $U_{i}\subset\cal{E}$.

Because exactly the same type of reasoning can be used to show that
$V_{i}\subset\cal{E}$, we leave the details to the reader.

We also check that the sequences
$\{2p_{n}\}_{n=1}^{\infty},\;\{q_{n}\}_{n=1}^{\infty}$ are contained
in $\cal{E}$. In order to show that the $2p_{n}\in\cal{E}$ for given
$n\in\N_{+}$ we proceed by induction. Clearly $2p_{1}=2\in\cal{E}$.
Let us suppose that for some $n$ the number $2p_{n}$ is an element
of $\cal{E}$, and thus $e(2p_{n})=e(2p_{n}+1)$. Then we have
\begin{align*}
e(2p_{n+1})&=e(2(4p_{n}+1))=e(4p_{n}+1)+1=e(p_{n})+2,\\
e(2p_{n+1}+1)&=e(4(2p_{n})+3)=e(2p_{n}+1)+1=e(2p_{n})+1=e(p_{n})+2.
\end{align*}

Using induction we prove that $q_{n}\in\cal{E}$ for any given $n$.
Indeed, for $n=1$ we have $q_{1}=6$ and $e(6)=e(7)$. Suppose that
$e(q_{n})=e(q_{n}+1)$ for some $n$. Then we have
\begin{align*}
e(q_{n+1})  &=e(4q_{n}+2)=e(2q_{n}+1)+1\\
            &=\op{max}\{e(q_{n}),e(q_{n}+1)\}+1=e(q_{n}+1)+1,\\
e(q_{n+1}+1)&=e(4q_{n}+3)=e(q_{n}+1)+1.
\end{align*}

Now we define the set
\begin{equation*}
\cal{E}':=\{2p_{n}\}_{n=1}^{\infty}\cup\{q_{n}\}_{n=1}^{\infty}\cup\bigcup_{i=1}^{\infty}(U_{i}\cup
V_{i})
\end{equation*}

Using a simple script written in PARI \cite{Pari} we calculated all
members of the set $\cal{E}$ for $n\leq 10^{8}$ and we checked that
all these numbers are contained in the set $\cal{E}'$. This leads us
to the following.

\begin{conj}
We have $\cal{E}=\cal{E}'$.
\end{conj}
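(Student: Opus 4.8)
The plan is to reduce the conjecture to its nontrivial half $\cal{E}\subseteq\cal{E}'$, the reverse inclusion $\cal{E}'\subseteq\cal{E}$ having already been established in the discussion preceding the statement. The engine of the proof is the first difference $d(m):=e(m+1)-e(m)$. Using $e(2k)=e(k)+1$ and $e(2k+1)=\max\{e(k),e(k+1)\}$ one checks by induction that $d(m)\in\{-1,0,1\}$ and that $d$ obeys the finite recursion $d(2k)=\max\{-1,d(k)-1\}$ and $d(2k+1)=1+\min\{d(k),0\}$, with $d(1)=1$. Hence $d(m)$ is computed by a three state automaton (states $+,0,-$ recording $d\in\{1,0,-1\}$) that reads the binary digits of $m$ from the most significant one onward, starting in state $+$. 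Since $\cal{E}=\{m:d(m)=0\}$, membership in $\cal{E}$ is decided by this automaton terminating in state $0$, so $\cal{E}$ is characterised by an explicit pattern on the binary expansion of $m$.

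Writing $m=(1w)_2$, a direct analysis of the automaton shows that $w$ is accepted exactly when $w\in L:=1^{*}0(1^{+}0\mid 0^{+}1)^{*}$: after an initial block $1^{a}0$ carrying state $+$ to $0$, the word is a concatenation of \emph{excursions} $1^{c}0$ or $0^{c}1$ ($c\ge 1$), each returning the automaton to state $0$. In parallel I would rewrite the four families defining $\cal{E}'$ in terms of the words $w$: the progressions $U_i$ give all $w$ ending in $00(10)^{i-1}1$, the progressions $V_i$ give all $w$ ending in $1(10)^{i}$, while the thin sequences $\{2p_n\}$ and $\{q_n\}$ give precisely the periodic words $0(10)^{*}$ and $(10)^{+}$. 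Thus the conjecture becomes the regular language identity $L=\Sigma^{*}00(10)^{*}1\cup\Sigma^{*}1(10)^{+}\cup 0(10)^{*}\cup(10)^{+}$, whose easy inclusion $\supseteq$ merely re-proves $\cal{E}'\subseteq\cal{E}$.

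For the hard inclusion $L\subseteq\cal{E}'$ I would split on the last bit of $w$. If $w$ ends in $1$ then, because the automaton can reach state $0$ by reading a $1$ only from state $-$, and state $-$ is entered only by reading a $0$, the word must end in $01$; a short backward induction along the trailing alternating block then forces the appearance of a factor $00$, so that $w$ ends in $00(10)^{*}1$ and lies in some $U_i$. If $w$ ends in $0$ it must end in $10$; when a factor $11$ is present the symmetric backward induction produces a suffix $1(10)^{+}$, placing $w$ in some $V_i$, and when no $11$ occurs (hence, together with the ending in $0$, no $00$) the word is alternating and equals $0(10)^{*}$ or $(10)^{+}$, i.e.\ a term of $\{2p_n\}$ or $\{q_n\}$. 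Since every residual prefix is free in the $U_i$ and $V_i$ families (the leading digit of the summand supplies the leading $1$ of $m$), each case lands in $\cal{E}'$.

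The main obstacle is precisely the terminal block analysis: one must show that a word of $L$ can never terminate on a purely alternating suffix of the ``wrong'' parity, so that a factor $00$ (respectively $11$) is forced, and then separate these bulk words from the two genuinely thin periodic families $\{2p_n\},\{q_n\}$, which are exactly the alternating words surviving this dichotomy. Establishing the recursion for $d$ and this terminal-block dichotomy is the combinatorial heart; once they are in place, the identity $\cal{E}=\cal{E}'$ reduces to the finite verification that the automaton language $L$ coincides with the displayed union.
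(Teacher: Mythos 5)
There is nothing in the paper to compare your argument against: this statement is one of the paper's open conjectures, and the authors establish only the easy inclusion $\cal{E}'\subseteq\cal{E}$ (the inductions in the surrounding discussion showing $U_{i},V_{i},\{2p_{n}\},\{q_{n}\}\subseteq\cal{E}$), plus a numerical check of $\cal{E}\subseteq\cal{E}'$ for $n\leq 10^{8}$. Your plan, by contrast, is a genuine proof strategy for the full equality, and every key claim in it checks out. Indeed, with $d(m)=e(m+1)-e(m)$ one gets $d(1)=1$, $d(2k)=\max\{d(k)-1,-1\}$, $d(2k+1)=1+\min\{d(k),0\}$, hence $d(m)\in\{-1,0,1\}$ and $d$ is computed by your three-state automaton read along the binary digits of $m$; its accepted language is exactly $1^{*}0(1^{+}0\mid 0^{+}1)^{*}$ (decompose an accepting run at its visits to the middle state); and the four constituents of $\cal{E}'$ translate exactly as you say, since $p_{i}$ written in $2i+1$ binary digits is $00(10)^{i-1}1$ and $q_{i}$ is $1(10)^{i}$, so $\bigcup_i U_i$, $\bigcup_i V_i$, $\{2p_{n}\}$, $\{q_{n}\}$ become $\Sigma^{*}00(10)^{*}1$, $\Sigma^{*}1(10)^{+}$, $0(10)^{*}$, $(10)^{+}$ respectively. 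For the hard inclusion, the ``terminal-block dichotomy'' you correctly single out as the crux is easiest to nail going forward rather than backward: reading $11$ sends all three states to $+$, and reading $00$ sends all three states to $-$, so the final state depends only on the (necessarily alternating) suffix of $w$ after its last doubled letter; checking the three reset situations ($+$ after a final $11$, $-$ after a final $00$, no double at all so that $w$ is alternating and read from the start state $+$) against the parity of the last letter produces precisely your four families and simultaneously rules out the wrong-parity alternating tails. One small slip in your wording: for words ending in $0$, ``no $11$ occurs, hence no $00$'' is not a valid implication for arbitrary words (take $w=100$); it is acceptance, via the dichotomy just described, that forbids a trailing $00$-block there. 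Modulo writing out these finite verifications, your outline settles the conjecture affirmatively --- something the paper itself does not do.
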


\begin{conj}
Let $p$ be a prime number. Then the polynomial $B_{p}(t)$ is
irreducible.
\end{conj}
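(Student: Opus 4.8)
The plan is to treat this as a Cohn--Filaseta style irreducibility problem, exploiting the one arithmetic input we fully control, namely $B_p(2)=p$, which is immediate from the congruence $B_n(t)\equiv n\pmod{t-2}$ of Theorem \ref{specialvalues}. First I record that $B_p$ is primitive: for odd $p$ its constant term is $B_p(0)=1$, so the gcd of its coefficients is $1$, and by Gauss's lemma it suffices to exclude a factorization $B_p=gh$ with $g,h\in\Z[t]$ both nonconstant (the case $p=2$ is trivial, $B_2(t)=t$). Evaluating at $t=2$ gives $g(2)h(2)=p$, so one factor, say $g$, satisfies $|g(2)|=1$.

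The reduction is then to the following analytic claim, which I would isolate as the main lemma: \emph{every complex root $\alpha$ of $B_p(t)$ satisfies $|2-\alpha|>1$}. Granting this, write $g(t)=c\prod_i(t-\alpha_i)$ with $c\in\Z$ the leading coefficient of $g$ (so $|c|\ge1$) and the $\alpha_i$ among the roots of $B_p$; if $g$ is nonconstant then it has at least one such factor and each obeys $|2-\alpha_i|>1$, whence $|g(2)|=|c|\prod_i|2-\alpha_i|>1$, contradicting $|g(2)|=1$. So no nontrivial factorization exists. A convenient sufficient form of the lemma is that every root has $\op{Re}(\alpha)<1$, since then $|2-\alpha|\ge 2-\op{Re}(\alpha)>1$.

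To prove this zero-free region I would first dispose of the real axis: because the coefficients of $B_n(t)$ are the hyperbinary counts of \cite{Kla} and hence nonnegative, $B_p(x)>0$ for all real $x\ge0$, so there are no real roots in $[1,3]$; the substitution $t=s+\tfrac1s$ in the identity $B_p\!\left(s+\tfrac1s\right)=\sum_{i=0}^{p-1}s^{\nu(p-1-i)-\nu(i)}$ reconfirms this, the right-hand side being (after clearing the lowest power of $s$) a polynomial with nonnegative integer coefficients, hence positive for $s>0$. For the genuinely complex roots I would attempt an induction on $n$ through the recursion, where the even step is automatic since $B_{2n}=tB_n$ merely adjoins the root $0$.

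The hard part, and the reason the statement is only conjectural, is the odd step $B_{2n+1}=B_n+B_{n+1}$: the location of the zeros of a sum is not governed by the zeros of the summands, so the induction breaks precisely here. Equivalently, the off-the-shelf Filaseta criterion is unavailable, as it would require the evaluation base to dominate the largest coefficient $H$ of $B_p$, whereas the only value we know to be prime sits at base $2$ while the coefficients grow (they sum to $s(p)$, which is unbounded). Overcoming this seems to demand a bespoke zero-free region for $B_p$ in $\op{Re}(t)\ge1$ (or in the disc $|t-2|\le1$) proved directly, perhaps by an analytic estimate showing the top-degree and constant contributions dominate there, or by propagating the region through the recursion using the identities of Section~\ref{sec2} (such as $tB_n=B_{n-1}+B_{n+1}$ for odd $n$ and the coprimality of Corollary~\ref{gcd}) and the product formula of Theorem~\ref{genfunt}. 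I would also keep in reserve the auxiliary constraints $g(0),g(-1)\in\{\pm1\}$ furnished by $B_p(0)=1$ and $B_p(-1)\in\{\pm1\}$ (for $p>3$), which sharply restrict any hypothetical low-degree factor and might settle such factors unconditionally even before the full region is in hand.
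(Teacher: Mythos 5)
You should note first that this statement is not proved in the paper at all: it is one of the open conjectures of Section~\ref{sec6}, supported only by a PARI computation for the first million primes, so there is no proof in the paper to compare against — and your attempt, by your own admission, does not supply one. What you do have is a correct \emph{conditional} reduction. The ingredients you draw from the paper are accurately used: $B_p(2)=p$ from Theorem~\ref{specialvalues}(3), $B_p(0)=1$ for odd $p$ (so $B_p$ is primitive and Gauss's lemma applies), $B_p(-1)\in\{\pm1\}$ for $p>3$ from Theorem~\ref{specialvalues}(2), and the nonnegativity of the coefficients via the hyperbinary interpretation from \cite{Kla}. The Cohn-style deduction is also sound: if $B_p=gh$ over $\Z[t]$ with both factors nonconstant, then $g(2)h(2)=p$ forces, say, $|g(2)|=1$, while $g(t)=c\prod_i(t-\alpha_i)$ with $|c|\geq1$ and at least one root $\alpha_i$ of $B_p$ would give $|g(2)|>1$ once every root satisfies $|2-\alpha|>1$. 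So the conjecture would indeed follow from your ``main lemma.''

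The genuine gap is that this main lemma — a zero-free region for $B_p$ in the disc $|t-2|\leq1$, or the stronger claim that all roots have real part less than $1$ — is left entirely unproven, and it is precisely where all the difficulty of the conjecture lives. Your real-axis argument (positivity of $B_p(x)$ for $x\geq0$) excludes only real roots and says nothing about complex roots near $t=2$; the induction through the recurrence collapses at the odd step $B_{2n+1}=B_n+B_{n+1}$, as you say, because zeros of a sum are not controlled by zeros of the summands; and the off-the-shelf Cohn/Filaseta criteria fail because the coefficients of $B_p$ (summing to $s(p)$) are unbounded while the only known prime value occurs at the small base $2$. Your diagnosis of these obstructions is accurate and the reduction could be a useful first step for future work, but the statement remains exactly what the paper says it is: a conjecture.
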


\noindent Using a simple script written in PARI we check that the
above conjecture is true for the first million primes.

\begin{conj}
For each $k\in\N_{+}$ there exists an integer $n$ with exactly $k$
prime divisors such that the polynomial $B_{n}(t)$ is irreducible.
\end{conj}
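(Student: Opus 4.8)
The plan is to produce, for each $k$, an \emph{odd} integer $n$ with exactly $k$ distinct prime divisors (I write $\omega(n)=k$) for which $B_{n}(t)$ is provably irreducible, and the natural source of such $n$ is Corollary \ref{+-expression}. First I would dispose of even $n$: if $n=2^{a}m$ with $m$ odd and $a\geq 1$, then iterating the even branch of the recurrence gives $B_{n}(t)=t^{a}B_{m}(t)$, which is reducible as soon as $a\geq 2$, and for $a=1$ is irreducible only when $B_{m}$ is a nonzero constant, forcing $m=1$ and hence $n=2$ (with $\omega(2)=1$). Thus, apart from this single case, every admissible $n$ must be odd, and I restrict attention to odd $n$ from the outset.

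The second step exploits the identity $B_{2^{m}-1}(t)=\frac{t^{m}-1}{t-1}=1+t+\cdots+t^{m-1}$ from Corollary \ref{+-expression}. When $m$ is prime this is exactly the cyclotomic polynomial $\Phi_{m}(t)$, which is irreducible over $\Q$; when $m$ is composite it factors as $\prod_{d\mid m,\,d>1}\Phi_{d}(t)$. Hence the family $n=2^{m}-1$ with $m$ prime supplies odd integers for which $B_{n}(t)$ is irreducible, and the conjecture would follow for this family once one establishes that for every $k\in\N_{+}$ there is a prime $m$ with $\omega(2^{m}-1)=k$. The small cases are encouraging: $m=2$ gives $2^{2}-1=3$ (so $k=1$), $m=11$ gives $2^{11}-1=23\cdot 89$ (so $k=2$), and $m=29$ gives $2^{29}-1=233\cdot 1103\cdot 2089$ (so $k=3$), each with $m$ prime and therefore $B_{2^{m}-1}=\Phi_{m}$ irreducible.

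The main obstacle is precisely this last number-theoretic step: controlling the number of distinct prime factors of the Mersenne numbers $2^{m}-1$ as $m$ runs over the primes is beyond current technology. Although $\omega(2^{m}-1)$ is unbounded along the primes, so that one expects every value of $k$ to be realised, the sequence may skip values, and proving that a prescribed large $k$ actually occurs seems out of reach. An alternative route would be to enlarge the supply of provably irreducible $B_{n}$ beyond the cyclotomic family---for instance by applying an Eisenstein-type criterion or reduction modulo a small prime directly to the expansions coming from Lemma \ref{baslem1}---so as to gain enough freedom to prescribe $\omega(n)$ independently of irreducibility; granting the preceding conjecture that $B_{p}(t)$ is irreducible for every prime $p$ already settles the case $k=1$. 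The fundamental difficulty underlying every approach is the absence of any multiplicative relation expressing $B_{mn}(t)$ through $B_{m}(t)$ and $B_{n}(t)$, so that a factorisation of $n$ yields no leverage whatsoever on a factorisation of $B_{n}(t)$.
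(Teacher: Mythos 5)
The statement you have attempted is not a theorem of the paper but one of its open conjectures: the authors give no proof at all, only numerical evidence (a PARI computation producing, for each $k\leq 7$, the least integer $c_{k}$ with exactly $k$ prime divisors such that $B_{c_{k}}(t)$ is irreducible, e.g.\ $c_{2}=55=5\cdot 11$, $c_{3}=665=5\cdot 7\cdot 19$, $c_{7}=37182145=5\cdot7\cdot11\cdot13\cdot17\cdot19\cdot23$). So there is no proof in the paper to compare against, and your proposal, by its own admission, is not a proof either; the honest conclusion you reach is the correct one. Your partial steps are sound: the reduction to odd $n$ (apart from $n=2$) via $B_{2^{a}m}(t)=t^{a}B_{m}(t)$ is right, and so is the observation that Corollary \ref{+-expression} gives $B_{2^{m}-1}(t)=1+t+\cdots+t^{m-1}=\Phi_{m}(t)$ when $m$ is prime, which is irreducible over $\Q$. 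The approach then dies exactly where you say it does: realising a prescribed $k$ inside this family requires a prime $m$ with $\omega(2^{m}-1)=k$, and nothing of this kind is known about Mersenne numbers.

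One correction, though. You assert as known that $\omega(2^{m}-1)$ is unbounded as $m$ runs over the primes. This is itself open: it would in particular imply that $2^{p}-1$ is composite for infinitely many primes $p$, and even that weaker statement is a well-known unsolved problem (as is the infinitude of Mersenne primes). So even the heuristic underpinning of your chosen family is unproven, which only reinforces your verdict that this route cannot be completed. It is also worth noting that the paper's data points in a different direction than your cyclotomic family: the witnesses $c_{k}$ are small odd squarefree numbers ($5\cdot 7\cdot 11\cdots$), for which irreducibility is verified directly, suggesting that irreducibility of $B_{n}(t)$ is expected to be typical rather than exceptional --- consistent with the paper's companion conjecture that $B_{p}(t)$ is irreducible for every prime $p$, which, as you note, would settle $k=1$ (though $B_{2}(t)=t$ or $B_{3}(t)=t+1$ already does that unconditionally). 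Your closing diagnosis --- that the real obstruction is the absence of any multiplicative relation expressing $B_{mn}(t)$ in terms of $B_{m}(t)$ and $B_{n}(t)$ --- is a fair account of why the conjecture remains out of reach.
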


\noindent Let $k$ be positive integer and let $c_{k}$ be the
smallest integer such that $c_{k}$ has exactly $k$ prime divisors
and the polynomial $B_{c_{k}}(t)$ is irreducible. Below we tabulate
the values of $c_{k}$ for $k\geq 7$.

\begin{equation*}
\begin{array}{lll}
\hline
  k & c_{k} & \mbox{Factorization of}\; c_{k} \\
  \hline
  1 & 2 & 2 \\
  2 & 55 & 5\cdot11 \\
  3 & 665 & 5\cdot7\cdot19 \\
  4 & 6545 & 5\cdot7\cdot11\cdot17 \\
  5 & 85085 & 5\cdot7\cdot11\cdot13\cdot17 \\
  6 & 1616615 & 5\cdot7\cdot11\cdot13\cdot17\cdot19 \\
  7 & 37182145 & 5\cdot7\cdot11\cdot13\cdot17\cdot19\cdot23\\
  \hline
\end{array}
\end{equation*}





\bigskip \noindent {\bf Acknowledgments}: The Authors would like to thank the anonymous
referee for several helpful suggestions which significantly improved
the original presentation.

\bigskip

\noindent Jagiellonian University, Institute of Mathematics,
{\L}ojasiewicza 6, 30-348 Krak\'ow, Poland; e-mail:\; {\tt
maciej.ulas@uj.edu.pl}

 \end{document}